\theoremstyle{plain}
\newtheorem{lemma}{Lemma}[section]
\newtheorem{proposition}[lemma]{ Proposition}
\newtheorem{theorem}[lemma]{ Theorem}
\newtheorem{theorem*}{ Theorem}
\newtheorem{corollary}[lemma]{ Corollary}
\theoremstyle{definition}
\newtheorem{definition}[lemma]{Definition}
\theoremstyle{remark}
\newtheorem{remark}[lemma]{Remark}
\newtheorem{example}[lemma]{Example}
\newtheorem{notation}[lemma]{Notation}
\numberwithin{equation}{section}
\DeclareMathOperator{\Graph}{Graph}
\newcommand{\paragr}[1]{\paragraph{\texttt {#1}}}
\newcommand{\restr}[1]{_{\llcorner{#1}}}
\newcommand{\wlim}{\mathrm{weak}^{*}\!\!\!\!\mathrm{-}\!\!\lim}
\newcommand{\OO}{\mathcal{O}(1)}
\newcommand{\R}{\mathbb{R}}
\newcommand{\N}{\mathbb{N}}
\newcommand{\TV}{\mathrm {Tot.Var.}}
\newcommand{\BV}{\mathrm {BV}}
\newcommand{\SBV}{\mathrm {SBV}}
\newcommand{\rc}{\text{\rm c}}
\newcommand{\loc}{\text{\rm loc}}
\newcommand{\Ll}{\mathcal{L}}
\newcommand{\jump}{\mathrm{jump}}
\newcommand{\inn}{\mathrm{in}}
\newcommand{\out}{\mathrm{out}}
\newcommand{\cont}{\mathrm{cont}}
\newcommand{\Dom}{\mathrm{Dom}}
\newcommand{\dmns}{\mathrm{N}}
\DeclareMathOperator{\Dif}{D\!\!}
\DeclareMathOperator{\Difc}{D^{c}}
\DeclareMathOperator{\Difj}{D^{j}}
\DeclareMathOperator{\Difxa}{D_{\textit{x}}^{a}}
\DeclareMathOperator{\Difxc}{D_{\textit{x}}^{c}}
\DeclareMathOperator{\Diftc}{D_{\textit t}^{c}}
\DeclareMathOperator{\Difxj}{D_{\textit x}^{j}}
\title[SBV Regularity for  Genuinely Nonlinear 1D Hyperbolic Systems of Conservation Laws]{SBV Regularity for  Genuinely Nonlinear, Strictly Hyperbolic Systems of Conservation Laws in one space dimension}
\author{Stefano Bianchini}
\address{SISSA, via Bonomea 265, I-34014 Trieste (ITALY)}
\email{bianchin@sissa.it}
\author{Laura Caravenna}
\address{Centro De Giorgi, Scuola Normale Superiore, Piazza dei Cavalieri, 3
I-56100 Pisa (ITALY)}
\email{laura.caravenna@sns.it}
\date{October, 2010}
\thanks{The authors wish to thank Giovanni Alberti for the useful discussions about this paper. %This work has been supported by the ERC Starting Grant CONSLAW
}
\keywords{Conservation laws; Oleinik's inequality; front-tracking algorithm; regularity estimates.}
\begin{document}
\subjclass[2000]{Primary 35L65; Secondary 35B05, 35D10}

\begin{abstract}
We prove that if $t \mapsto u(t) \in \BV(\R)$ is the entropy solution to a $N \times N$ strictly hyperbolic system of conservation laws with genuinely nonlinear characteristic fields
\[
u_t + f(u)_x = 0,
\]
then up to a countable set of times $\{t_n\}_{n \in \N}$ the function $u(t)$ is in SBV, i.e. its distributional derivative $u_x$ is a measure with no Cantorian part.

The proof is based on the decomposition of $u_x(t)$ into waves belonging to the characteristic families
\[
u(t) = \sum_{i=1}^N v_i(t) \tilde r_i(t), \quad v_i(t) \in \mathcal M(\R), \ \tilde r_i(t) \in \R^N,
\]
and the balance of the continuous/jump part of the measures $v_i$ in regions bounded by characteristics. To this aim, a new interaction measure $\mu_{i,\jump}$ is introduced, controlling the creation of atoms in the measure $v_i(t)$.

The main argument of the proof is that for all $t$ where the Cantorian part of $v_i$ is not $0$, either the Glimm functional has a downward jump, or there is a cancellation of waves or the measure $\mu_{i,\jump}$ is positive.
\end{abstract}

\maketitle

\tableofcontents

%:
%: Introduzione
%:
\section{Introduction}

In this paper we consider the entropy solution to the hyperbolic system in one space dimension
\begin{equation}
\label{E:consLaw}
\begin{cases}u_t + f(u)_x = 0,
\\
u(t=0)=\bar u
\end{cases}
 \qquad 
 \begin{array}{c}
 u :\R^+\times\R\to\Omega\subset\R^\dmns,
 \quad f\in C^{2}(\Omega,\R^{\dmns})
 \vspace{.2cm} \\
\bar u \in\BV(\R; \Omega),\quad | \bar u|_{\BV}\ll 1.
\end{array}
\end{equation}
We assume that each characteristic field is either genuinely nonlinear or linearly degenerate, and in what follows we will refer to the unique solution constructed by vanishing viscosity or wave-front tracking, see~\cite{Bressan,daf:book}.

While linearly degenerate families do not gain any regularity during the time evolution, the genuinely nonlinear families show a regularizing effect due to the non linearity of the eigenvalue. The most famous one is probably the decay of positive waves, which in the scalar case $N=1$ takes the form
\begin{equation}
\label{E:decaypos1}
u_x^+ \leq \frac{1}{k t},
\end{equation}
where $k$ is the genuinely nonlinearity constant,
\[
k = \inf_u f''(u) > 0.
\]
For a strictly hyperbolic system of conservation laws, even if the $i$-th family satisfies the genuinely nonlinearity condition
\[
D\lambda_i(u) r_i(u) \geq k > 0,
\]
where $\{\lambda_i\}_{i=1}^N$ and $\{r_i\}_{i=1}^N$ are the eigenvalues and eigenvectors of $A(u) := Df(u)$ with a suitable orientation, then it may happen that new positive waves are created at a later time. In this case, the estimate~\eqref{E:decaypos1} takes the form
\begin{equation}
\label{E:decaypos3}
v^+_i(t)(B) \leq C \bigg\{ \frac{\mathcal L^1(B)}{t-s} + Q(s) - Q(t) \bigg\}, \quad 0 \leq s < t,
\end{equation}
where $B$ is a Borel set, $Q$ is the Glimm interaction potential and the constant $C$ depends on $k$. The wave measures $v_i(t)$ are defined precisely in Section \ref{Ss:decowave}, and roughly speaking they are the part of $u_x(t)$ which has direction close to $r_i(u(t))$ and travels with a speed close to $\lambda_i(u(t))$.

\noindent Since $Q(t)$ is a decreasing function, an elementary argument yields that $v^+_i$ is absolutely continuous up to countably many times: in fact, if $B$ is $\mathcal L^1$-negligible and $v^+_i(t)(B) > 0$, then by letting $s \nearrow t$ we obtain
\[
Q(t) - \lim_{s \nearrow t} Q(s) \geq \frac{v^+_i(t)(B)}{C} > 0,
\]
so that $t \mapsto Q(t)$ has a jump downward. Being $Q$ decreasing, this can happen only countably many times.

A complementary estimate is the fact that also $v^-_i(t)$ has no Cantorian part. The first positive result has been given in~\cite{ambDel}, where it is shown that the solution $u(t)$ of a genuinely nonlinear scalar conservation law in one space dimension is SBV up to countably many times. In that paper, the authors consider the characteristic lines
\[
\dot x = f'(u(t,x)), \quad u(0,x) = y,
\]
and prove the following: every time a Cantorian part in $u_x(t)$ appears, then there is a set of positive measure $A$ such that all the characteristics starting from $y \in A$ are defined in the interval $[0,t]$ but cannot be prolonged more than $t$. By the $\sigma$-finitness of $\mathcal L^1$, one can apply the same observation used to prove that the positive part of $u_x(t)$ is abolutely continuous up to countably many times, and deduce that up to countably many times the solution $u(t)$ is SBV.

The use of the measure of the set $A(t)$ of initial points for characteristics which can be prolonged up to time $t$ has been applied to obtain extension of the above result: in~\cite{Roger} the SBV estimate is used for scalar balance laws, later extended to Temple systems in~\cite{anc} and in~\cite{BDR} to the case of Hamilton-Jacobi equation in several space dimension with uniformly convex Hamiltonian.
In the context of the Riemann problem for genuinely non-linear systems, the thesis has moreover been proved in~\cite{dafermos}.

The case of genuinely nonlinear systems of conservation laws is more complicate by the fact that centered rarefaction waves are created at $t > 0$, and thus the characteristics are not unique in the future. Thus, in estimating the $\mathcal L^1$-measure of the initial points, one has to take into account also that interaction points can generate centered rarefaction waves, so that the estimate should be something like
\begin{align}
\label{E:rough_balance}
\mathcal L^1 &\Big\{ \text{initial points of characteristics arriving at $t$ but not prolongable} \Big\} \crcr
&\geq \Big\{ \text{measure of the Cantorian part of $v_i(t)$} \Big\} - \Big\{ \text{amount of interaction in $[0,t]$} \Big\}.
\end{align}

In this paper we use a different approach. Let $\tilde \lambda_i(t,x)$ be the Rankine-Hugoniot speed if $u$ has a jump in the point $(t,x)$ of the $i$-th family or the $i$-eigenvalue of $A(u)$ in the remaining cases. We first prove that not only the characteristic waves $v_i(t)$ satisfy a balance equation of the form
\[
\partial_t v_i + \partial_x \big( \tilde \lambda_i v_i \big) = \mu_i,
\]
with $\mu_i$ a measure bounded by the interaction-cancellation of waves, but also its atomic part $v_{i,\jump}(t)$ satisfy
\[
\partial_t v_{i,\jump} + \partial_x \big( \tilde \lambda_i v_{i,\jump} \big) = \mu_{i,\jump},
\]
with $\mu_{i,\jump}$ bounded measure. This measure $\mu_{i,\jump}$ differs from the interaction-cancellation measure because it is not $0$ when an atomic part in $v_i$ is created, and it describes the natural behavior of solutions to genuinely nonlinear conservation laws: it is easy to create a shock because of the nonlinearity, but you can remove it only by means of cancellation or strong interactions.

The second step is to use the two above balance equations to study the balance of $v_i$, $v_{i,\jump}$ and $v_{i,\cont} = v_i - v_{i,\jump}$ in regions bounded by characteristics. The key estimate we obtain is that
\begin{equation}
\label{E:decaneg2}
v_{i,\cont}(t)(B) \geq - C \bigg\{ \frac{\mathcal L^1(B)}{\tau-t} + \mu^{ICJ} \big( \big\{[t,\tau] \times \R \big\} \big) \bigg\}, \quad 0 \leq t < \tau,
\end{equation}
where the measure $\mu^{ICJ}$ is the interaction-cancellation measure $\mu^{IC}$ plus morally the measure $\sum_{i=1}^N |\mu_{i,\jump}|$. This is the companion estimate of~\eqref{E:decaypos3}, and using the same argument of the positive part we conclude with the main result of this paper (Corollary~\ref{C:mainSBV}):

\begin{theorem}
\label{T:Main_SBV}
Let $u(t)$ be the entropy solution of the Cauchy problem
\[
\begin{cases}
u_t + f(u)_x = 0, 
\\
u(t=0)=\bar u
\end{cases}
\qquad u :\R^+\times\R\to \Omega\subset\R^\dmns,
\qquad f\in C^{2}(\Omega;\R^{\dmns})
\]
for a strictly hyperbolic system of conservation laws where each characteristic field is genuinely non-linear, with initial datum $\bar u$ small in $\BV(\R;\Omega)$.
Then $u(t)\in\SBV(\R;\Omega)$ out of at most countably many times.
\end{theorem}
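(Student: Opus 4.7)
The plan is to reduce the SBV regularity of $u(t)$ to the SBV regularity of each characteristic wave measure $v_i(t)$ in the decomposition $u_x(t)=\sum_{i=1}^{\dmns}v_i(t)\,\tilde r_i(t)$, and then to control the Cantorian part of every $v_i(t)$ by two complementary estimates: the positive-part estimate~\eqref{E:decaypos3} and the companion estimate~\eqref{E:decaneg2} announced in the introduction.

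First I would verify that, out of the countable set of times at which $u(t)$ has infinitely many shocks of definite size or at which the total variation jumps, the absence of a Cantorian part in each $v_i(t)$ suffices to conclude $u(t)\in\SBV(\R;\Omega)$: indeed the basis $\{\tilde r_i(t,\cdot)\}$ depends in a locally Lipschitz manner on $u(t)$ away from its jump set, so no Cantor part can be introduced when recombining the $v_i(t)$ into $u_x(t)$. Next, the positive waves $v_i^+(t)$ have no Cantor part out of countably many times by the elementary argument recalled just after~\eqref{E:decaypos3}: a Cantor part would force a downward jump of the Glimm potential $Q$, and monotonicity of $Q$ admits only countably many such jumps.

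The substantial step is the companion estimate~\eqref{E:decaneg2} for the continuous part $v_{i,\cont}=v_i-v_{i,\jump}$, since applying the same countability trick to the monotone nondecreasing map $s\mapsto \mu^{ICJ}([0,s]\times\R)$ then rules out a Cantorian part of $v_{i,\cont}(t)$ outside countably many times. To establish~\eqref{E:decaneg2} I would work at the level of front-tracking approximations, where the classical wave balance $\partial_t v_i+\partial_x(\tilde\lambda_i v_i)=\mu_i$ holds with $|\mu_i|$ dominated by the interaction-cancellation measure. The novel ingredient is the parallel balance $\partial_t v_{i,\jump}+\partial_x(\tilde\lambda_i v_{i,\jump})=\mu_{i,\jump}$ for the atomic part alone, with $\mu_{i,\jump}$ bounded by an enlargement $\mu^{ICJ}$ of the interaction-cancellation measure; subtracting yields a balance equation for $v_{i,\cont}$ with source $\mu_i-\mu_{i,\jump}$, controlled in total variation by $\mu^{ICJ}$.

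From these balance laws~\eqref{E:decaneg2} would follow by integrating $v_{i,\cont}$ on a region bounded laterally by two generalized $i$-characteristics issued at time $t$ from the endpoints of the Borel set $B$ and closed at a later time $\tau$: genuine nonlinearity forces the lateral boundaries to spread apart at a rate proportional to the enclosed $i$-th wave, which after a Gronwall-type manipulation yields a lower bound on the outgoing $x$-trace at time $\tau$; bringing it back to time $t$ costs $C\,\mathcal{L}^1(B)/(\tau-t)$ through the characteristic sides plus $C\,\mu^{ICJ}([t,\tau]\times\R)$ through the source. The step I expect to be the main obstacle is the construction and stability of $\mu_{i,\jump}$: in a genuinely nonlinear system shocks can form by compression even without interactions, so $\mu_{i,\jump}$ cannot be dominated by the interaction-cancellation measure alone; one has to quantify how the birth of an atom in $v_i$ corresponds to a transfer of continuous wave into the jump, prove that the resulting signed measure is well-defined, and show that it passes to the vanishing viscosity / front-tracking limit with the required bound.
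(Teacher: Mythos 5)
Your proposal follows essentially the same route as the paper: decompose $u_x$ into wave measures $v_i$, invoke the decay of positive waves~\eqref{E:decaypos3}, establish the companion lower bound~\eqref{E:decaneg2} for $v_{i,\cont}$ through the balance $\partial_t v_{i,\jump}+\partial_x(\tilde\lambda_i v_{i,\jump})=\mu_{i,\jump}$ and regions bounded by generalized $\bar\imath$-characteristics, and then apply the countability argument to the time marginal of $\mu^{ICJ}$. You have also correctly located the real technical difficulty, the construction, finiteness and limit of $\mu_{i,\jump}$; the paper resolves it by isolating the atomic part of $v_i^\nu$ on the maximal $(\varepsilon_0,\varepsilon_1)$-shock fronts of Definition~\ref{D:max_epsilon_shock} and passing to the limit via Theorem~\ref{T:pwconvergence}, with Lemma~\ref{L:jumpWaveBalRadon} bounding the positive part of $\mu_{i,\jump}$ by $\mu^{IC}$ and its negative part (new shocks born from compression, which $\mu^{IC}$ cannot see) by the Glimm functional. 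One small slip to fix: for the negative-wave estimate genuine nonlinearity makes the bounding characteristics \emph{converge} (not ``spread apart''), which is precisely why the endpoint separation must be at least of order $|v_{i,\cont}(t)(B)|\,(\tau-t)$.
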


\subsection{Structure of the paper}

The paper is organized as follows.

In Section~\ref{S:gene_prel_notion} the main notation and assumptions are introduced: strict hyperbolicity and characteristic families, and the decomposition into wave measures. A few fundamental results concerning the hyperbolic systems of conservation laws are recalled: Lax's solution to the Riemann problem (Theorem~\ref{T:sigmaLambda}) and Bressan's existence and uniqueness of a Lipschitz semigroup of solutions for small BV initial data (Theorem~\ref{T:semigroup_uniqueness}).

In Section~\ref{S:mainArg} we prove Theorem~\ref{T:Main_SBV} (Corollary~\ref{C:mainSBV} below). This result is a corollary of the fact that if the $\bar \imath$-th family is genuinely nonlinear, then the $\bar \imath$-th component $v_{\bar \imath}(t)$ of $u_x(t)$ is SBV up to countably many times (Corollary~\ref{C:viSBV}), and the latter is a consequence of the estimates~\eqref{E:decaypos3} and~\eqref{E:decaneg2} (Theorem~\ref{T:continuousEstimate}). In this section the notion of interaction-cancellation is recalled, and the SBV estimates are derived assuming~\eqref{E:decaneg2}, whose proof is postponed to Section~\ref{S:mainEstimate}.

Since the proof depends on uniform estimates for the wave-front tracking approximations, in Section~\ref{S:frontTrackingApp} we recall the basic properties of these approximated solutions. A key fact is the possibility to collect the jumps of the wave-front solution into two families: one is converging to the jump part of $u_x$, and the other to the continuous part of $u_x$. This is done by defining the maximal $(\varepsilon_0,\varepsilon_1)$-shocks (Definition~\ref{D:max_epsilon_shock}): $0 < \varepsilon_0 < \varepsilon_1$ are two treshold parameters, fixing the minimal size of the jump ($\varepsilon_0$) and the lower bound for the maximal size of the jump ($\varepsilon_1$). This definition has already been used in~\cite{Bressan} to study the structure of the semigroup solution $u$, which we recall in Theorem~\ref{T:pwconvergence}. From this result we obtain that the wave measure $v_i^\nu$, $v_{i,\jump}^\nu$ for the wave-front tracking approximation $u^\nu$ and the products $\tilde \lambda_i^\nu v^\nu_i$, $\tilde \lambda_i^\nu v^\nu_{i,\jump}$ converge weakly (Corollaries~\ref{C:corAprroximation},~\ref{C:hausdorfpendenze},~\ref{C:weakConvergences}).

In the last section, Section~\ref{S:mainEstimate}, we prove the decay estimate~\eqref{E:decaneg2}. First of all, we prove that the distributions
\begin{align*}
&\partial_t v^\nu_i + \partial_x \big( \tilde \lambda^\nu_i v^\nu_i \big),
&& \partial_t v^\nu_{i,\jump} + \partial_x \big( \tilde \lambda^\nu_i v^\nu_{i,\jump} \big)
\end{align*}
are uniformly bounded measures: we denote them respectively by $\mu_i^\nu$ and $\mu^\nu_{i,\jump}$ (Proposition~\ref{P:mu_estimate}). The latter measure is called the \emph{jump balance measure}. Since the bounds do not depend on the aprooximation parameter $\nu$, it is possible to pass to the limit and to obtain the balance equations for the wave measures $v_i$, $v_{i,\jump}$. The consequences of this fact however are not directly related with the SBV regularity, so we will address them in a forthcoming paper. Next, we study the balances of the measures $v^\nu_i$, $v^\nu_{i,\jump}$ in regions bounded by minimal characteristics (Lemma~\ref{L:waveBalances}). We then use an argument completely similar to the one used for the decay of positive waves in~\cite{Bressan}: if $I = [a,b]$ is an interval and $v_{i,\cont}^-(I)$ is too negative, then either $v^-_{i,\jump}$ is cancelled or the characteristics $a(t)$, $b(t)$ starting from $a$, $b$ collapse in a future time; in the last case the time $t$ for which $a(t) = b(t)$ is of the order of the length of the interval divided by the amount of negative wave, or by the interaction measure, cancellation measure and jump wave balance measure in the region spanned in time by characteristics from the interval. In this way we give a precise meaning to the inequality~\eqref{E:rough_balance}. We thus obtain the estimate~\eqref{E:decaneg2} first for the approximated wave-front tracking solution and for finitely many intervals, and then passing to the limit we recover the same estimate for the semigroup solution and for Borel sets (Lemmas~\ref{L:approxestimate},~\ref{L:decaEstOnIntervals}). Adding the already known decay estimate for positive waves, we obtain the desired result (Corollary~\ref{C:final_estimate}).

%\paragraph{\bf Basic functionals}
%With this notation, the $L^1$-l.s.c. functionals
%\[
%\TV(u) := \sum_i \int_\R |v_{i}| dx,
%\]
%\[
%Q(u) := \sum_{i<j} \int_{x < y} |u_{i,y}| |u_{j,x}| dxdy + \sum_i \int_{x < y} |u_{i,y}| |u_{i,x}| |\lambda_i(x) - \lambda_i(y)| dxdy
%\]
%satisfy for $C$ sufficiently large and in measure sense
%\[
%\frac{d}{dt} \big( \TV(u) + C Q(u) \big) \leq 0.
%\]
%{The amount of interaction is
%\[
%I(t,x) = \sum_{i \not j} |u_{i,x}||u_{j,y}| + \sum_i \textrm{Area}_i. 
%\]
%The interaction measure $\mu^{I}$ and interaction-cancellation $\mu^{IC}$.
%}

%:
%: Nozioni introduttive
%:
\section{General preliminary notions}
\label{S:gene_prel_notion}

Consider the Cauchy problem~\eqref{E:consLaw}. The following assumptions are done:

\begin{enumerate}

\item 
Strict hyperbolicity: we set $A(u)=\Dif f(u)$ and we assume that the eigenvalues $\{\lambda_{i}\}_{i}^{}$ of $A$ satisfy
\[
\lambda_1(u) <\dots < \lambda_{\dmns}(u),
\qquad u\in\Omega.
\]
We denote the unit right eigenvectors, and the left ones satisfying $r_{i}\cdot l_{j}=\delta_{ij}$, respectively by
\[
r_1(u) ,\dots ,r_{\dmns}(u)
\qquad
l_1(u) ,\dots ,l_{\dmns}(u).
\]

\item
Each $\bar \imath$-th characteristic field is either genuinely non-linear, i.e.~
\begin{equation*}
\label{E:genNonl}
|\Dif\lambda_{\bar \imath}(u) r_{\bar \imath}(u)|\geq k > 0,
\qquad
u\in\Omega,
\addtocounter{equation}{1}
\tag{$\theequation:{\bar\imath}$GN}
\end{equation*}
or linearly-degenerate, i.e.~
\begin{equation}
\label{E:linDeg}
\Dif\lambda_{\bar \imath}(u) r_{\bar \imath}(u)= 0,
\qquad
u\in\Omega.
\addtocounter{equation}{1}
\tag{$\theequation:{\bar\imath}$LD}
\end{equation}
\end{enumerate}
By the general theory on hyperbolic systems of conservation laws, one has then the following theorem.
\begin{theorem}
\label{T:sigmaLambda}
Let $\bar u=u^{-}\chi_{\{x<0\}}+ u^{+}\chi_{x\geq 0}$.
Then there exists a unique self-similar weak solution whose shocks satisfy the Lax compatibility condition:
\[
\lambda_{i}(\omega^{-})\geq\lambda_{i}(\omega ^{-}, \omega ^{+})\geq\lambda_{i}(\omega ^{+})
\qquad\text{at each jump $[\omega^{-},\omega^{+}]$ having speed $\lambda_{i}(\omega ^{-}, \omega ^{+}) $}.
\]
\end{theorem}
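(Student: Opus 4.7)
The plan is Lax's classical construction. First, for each family $i$ and each base state $u_0\in\Omega$, I would build the $i$-th wave curve $\sigma\mapsto T_i(\sigma;u_0)$. In the linearly degenerate case I take $T_i(\cdot;u_0)$ to be the integral curve of $r_i$ through $u_0$; by Boillat's lemma this curve coincides with the Rankine--Hugoniot locus at $u_0$ and $\lambda_i$ is constant along it, so every discontinuity produced is a contact discontinuity automatically satisfying Lax's inequalities with equality. In the genuinely nonlinear case I take $T_i(\sigma;u_0)$ to be the $r_i$-rarefaction curve for $\sigma\geq 0$ (parametrised so that $\lambda_i\circ T_i$ is increasing in $\sigma$) and the Hugoniot curve for $\sigma\leq 0$. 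Expanding the Rankine--Hugoniot identity $f(T_i(\sigma;u_0))-f(u_0)=\lambda_i^s(\sigma)\bigl(T_i(\sigma;u_0)-u_0\bigr)$ in $\sigma$ yields $\partial_\sigma T_i(0;u_0)=r_i(u_0)$ together with $\lambda_i^s(\sigma)=\lambda_i(u_0)+\tfrac{\sigma}{2}\,D\lambda_i(u_0)\,r_i(u_0)+O(\sigma^{2})$, so that $T_i\in C^{1,1}$ across $\sigma=0$ and Lax's condition selects exactly the branch $\sigma\leq 0$.

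Second, I would compose the wave curves,
\[
\Phi(\sigma_1,\ldots,\sigma_N;u^-):=T_N(\sigma_N;\cdot)\circ T_{N-1}(\sigma_{N-1};\cdot)\circ\cdots\circ T_1(\sigma_1;u^-).
\]
The Jacobian of $\Phi$ in $\sigma$ at $\sigma=0$ is the matrix with columns $r_1(u^-),\ldots,r_N(u^-)$, invertible by strict hyperbolicity. The implicit function theorem then produces, for $|u^+-u^-|$ small, a unique tuple $(\sigma_1,\ldots,\sigma_N)$ with $\Phi(\sigma;u^-)=u^+$. Setting $u_0:=u^-$ and $u_i:=T_i(\sigma_i;u_{i-1})$ so that $u_N=u^+$, the self-similar solution is obtained by juxtaposing, between each pair $(u_{i-1},u_i)$, the appropriate elementary wave: a contact discontinuity of speed $\lambda_i(u_i)=\lambda_i(u_{i-1})$ in the LD case; a centred rarefaction fan with $x/t\in[\lambda_i(u_{i-1}),\lambda_i(u_i)]$ if the family is GN and $\sigma_i\geq 0$; a Lax-admissible shock of Rankine--Hugoniot speed $\lambda_i^s(\sigma_i)$ if the family is GN and $\sigma_i<0$. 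The smallness of the $\sigma_i$ keeps each wave speed within $O(|u^+-u^-|)$ of $\lambda_i(u^-)$, and strict hyperbolicity then forces consecutive waves to be strictly ordered in $x/t$, so the juxtaposition is a well-defined weak self-similar solution.

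For uniqueness I would reverse the construction. Any admissible self-similar solution must decompose into finitely many elementary waves separated by constant states; Lax's inequalities combined with strict hyperbolicity force each such wave to belong to exactly one family $i$ and to lie on the admissible branch of $T_i$, hence to be parametrised uniquely by a strength $\sigma_i'$. A second admissible solution would therefore yield a different preimage $\Phi(\sigma_1',\ldots,\sigma_N';u^-)=u^+$, contradicting the uniqueness obtained in the implicit function step.

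The main obstacle I expect is the second-order contact between the shock and rarefaction branches of $T_i$ at $\sigma=0$: it is precisely this contact that yields $T_i\in C^{1,1}$ and thus the correct Jacobian of $\Phi$ at the origin. Without it, $\Phi$ would fail to be differentiable with the expected derivative and the implicit function theorem could not be invoked. Establishing the contact requires a careful expansion of the Rankine--Hugoniot equation, using $f\in C^{2}$ together with the normalisation of $r_i$ and $l_i$.
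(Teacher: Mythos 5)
Your argument follows the same route as the paper: construct the Lax wave curves $\Psi_i$ (integral curve of $\tilde r_i$ for the rarefaction/LD branch, Hugoniot locus for the shock branch), compose them into a map $\Lambda$ (your $\Phi$) whose Jacobian at $\sigma=0$ has columns $\tilde r_1,\dots,\tilde r_\dmns$, and invert locally by the inverse/implicit function theorem. You spell out details that the paper defers to the cited reference---in particular the $C^{1,1}$ second-order contact of the shock and rarefaction branches at $\sigma=0$ and the uniqueness argument---but the structure of the proof is essentially identical.
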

\begin{proof}
We recall just that one can define $C^{1}$-curves $\Psi_{i}(\sigma)(u_{0})$ and the scalars $\lambda_{i}(u_{0}, \Psi_{i}(\sigma)u_{0}) $ by
\[
\begin{cases}
\dot\Psi_{i}(\sigma)=\tilde r_{i}(\Psi_{i}(\sigma))
&\sigma\geq 0
\\
f(\Psi_{i}(\sigma))-f(u_{0})=\lambda_{i}(u_{0}, \Psi_{i}(\sigma))\big(\Psi_{i}(\sigma)-u_{0}\big)
& \sigma< 0
\end{cases},
\qquad
\Psi_{i}(0)(u_{0})=u_{0},
\]
where $\tilde r_{i}$ is a vector parallel to $r_{i}$ and satisfying $\Dif \lambda_{\bar\imath}\tilde r_{\bar\imath}=1$ if~\eqref{E:genNonl} holds, otherwise $|\tilde r_{\bar\imath}|=1$ if instead~\eqref{E:linDeg} holds.
The proof (see e.g.~Th.~5.3,~\cite{Bressan}) is then based on the inverse function theorem applied to the local $C^1$ homeomorphism $\Lambda:U(0)\ni\sigma\mapsto u^+\in U(u^-)$
\[\Lambda(\sigma_1,\dots,\sigma_\dmns)(u^-)=\Psi_\dmns(\sigma_\dmns)\circ\dots\circ \Psi_1(\sigma_1)(u^-) .
\]
It is applied in a domain where the Jacobian of the map is uniformly bounded away from $0$.
From the $C^{1}$-regularity of the curves $\Psi_{i}$, its Jacobian at $\sigma= 0$ is
$\big[\tilde r_1|\dots |\tilde r_\dmns\big](u^{-})=\Dif\Lambda(\sigma=0)(u^{-})
%\qquad
%\big[\tilde l_1|\dots |\tilde l_\dmns\big]^T(u^{-})=\Dif\Lambda^{-1} (u^{-}),
%\qedhere
$.
Under~\eqref{E:genNonl}, if $u^{+}=\Psi_{\bar\imath}(\sigma)(u^{-})$ with $\sigma<0$ we have a \emph{compressive $\bar\imath $-shock}, otherwise a \emph{centered $\bar\imath $-rarefaction wave} (which is continuous).
In the case of linear degeneracy~\eqref{E:linDeg} we have an \emph{$\bar\imath$-contact discontinuity}.
\end{proof}

\begin{theorem}
\label{T:semigroup_uniqueness}
There exists a closed domain $\mathcal D\subset L^{1}(\R;\Omega)$ and a unique distributional solution $u=u(t,x)=[u(t)](x)$ which is a Lipschitz semigroup $\mathcal D\times [0,+\infty)\to \mathcal D $ and which for piecewise constant initial data coincides, for a small time, with the solution of the Cauchy problem obtained piecing together the standard entropy solutions of the Riemann problems.
Moreover, it lives in the space of $\BV$ functions.
\end{theorem}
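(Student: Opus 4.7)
The plan is to follow Bressan's classical construction via wave-front tracking. Given $\bar u \in \BV(\R;\Omega)$ with $|\bar u|_{\BV}\ll 1$, first approximate $\bar u$ in $L^{1}$ by piecewise constant $\bar u_{\nu}$ with $|\bar u_{\nu}|_{\BV}\le|\bar u|_{\BV}$ and jumps in a grid of mesh $\le 1/\nu$ in $\Omega$. For each $\nu$, build a piecewise constant approximate solution $u^{\nu}$ as follows: at each jump of $\bar u_{\nu}$ solve the Riemann problem via Theorem~\ref{T:sigmaLambda}, replacing each genuine rarefaction fan by finitely many small jumps of size at most $1/\nu$, each traveling at the characteristic speed at one endpoint; at each subsequent interaction of two fronts, reapply the same recipe, using a simplified Riemann solver (introducing a fictitious non-physical family traveling faster than all $\lambda_{i}$) for interactions whose product of strengths is below a threshold, so that only finitely many fronts are present for all positive times.

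The second step is to get uniform bounds. Assign to each front its Lax strength $\sigma_{\alpha}$ and define the Glimm functional
\[
\Upsilon(u^{\nu})=V(u^{\nu})+C_{0}\,Q(u^{\nu}),\qquad V=\sum_{\alpha}|\sigma_{\alpha}|,\qquad Q=\!\!\sum_{\alpha,\beta\text{ approaching}}\!\!|\sigma_{\alpha}\sigma_{\beta}|.
\]
The standard Lax–Glimm interaction estimates show that, for $C_{0}$ large enough and $|\bar u|_{\BV}$ small enough, $\Upsilon$ strictly decreases at every real interaction and is nonincreasing across non-physical ones; hence $\Upsilon(u^{\nu}(t))\le\Upsilon(\bar u_{\nu})\le C|\bar u|_{\BV}$ uniformly in $\nu,t$. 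This yields uniform bounds on $\TV(u^{\nu}(t))$, $\|u^{\nu}(t)\|_{L^{\infty}}$, and a time-Lipschitz estimate $\|u^{\nu}(t)-u^{\nu}(s)\|_{L^{1}}\le L|t-s|$ with $L$ depending only on $|\bar u|_{\BV}$ and the characteristic speeds.

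The third step is compactness and construction of the semigroup. By Helly's theorem combined with the time-Lipschitz bound, a diagonal subsequence of $\{u^{\nu}\}$ converges in $L^{1}_{\loc}$ to some $u$; the construction ensures $u$ is a distributional solution satisfying the Lax conditions at every jump. Define $\mathcal D$ as the $L^{1}$-closure of the set of piecewise constant functions $v:\R\to\Omega$ with $\Upsilon(v)$ below a small threshold; setting $S_{t}\bar u:=u(t,\cdot)$ yields a map $\mathcal D\times[0,+\infty)\to\mathcal D$. The Lipschitz dependence $\|S_{t}\bar u-S_{t}\bar v\|_{L^{1}}\le L\|\bar u-\bar v\|_{L^{1}}$ is proved by a homotopy/tangent-vector argument: one interpolates $\bar u$ and $\bar v$ by a one-parameter family and estimates the $L^{1}$-norm of the corresponding shift of fronts through a weighted pseudo-distance, whose $C_{0}Q$-weighted growth is controlled by the interaction analysis. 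Consistency of $S_{t}$ with piecing together standard Riemann solutions for small time on piecewise constant data follows from the construction.

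The most delicate step is uniqueness of the distributional solution inside the class of Lipschitz semigroups that are consistent with the Riemann solver of Theorem~\ref{T:sigmaLambda} on piecewise constant data. The argument I would use is Bressan's local-integral characterization: for any other such solution $\tilde u$, at $\mathcal L^{1}$-a.e.\ time $t$ one compares on each small space interval the behavior of $\tilde u(t+h,\cdot)$ both with the piecewise self-similar Riemann solutions centered at the jump points of a piecewise constant approximation of $\tilde u(t)$ and with the semigroup orbit $S_{h}\tilde u(t)$; the consistency hypothesis gives two $o(h)$ error estimates whose combination with a Gronwall inequality in $t$ forces $\tilde u(t)=S_{t}\bar u$ for all $t\ge 0$. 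This uniqueness step is where the real work lies; the existence part, BV bound, and Lipschitz semigroup property are essentially bookkeeping around the Glimm interaction potential once the wave-front tracking scheme is set up.
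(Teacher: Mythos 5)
The paper does not prove Theorem~\ref{T:semigroup_uniqueness}: it simply recalls it as a foundational result from the literature, citing \cite{Bressan} (and \cite{daf:book}). Your sketch is a correct outline of exactly the wave-front-tracking/Glimm-functional/tangent-vector/Gronwall argument from Bressan's monograph, so it matches the approach the paper relies on; the only caveat is that the Lipschitz $L^{1}$-dependence on initial data is considerably more than ``bookkeeping'' (it is the hardest part of the theory, originally due to Bressan--Crasta--Piccoli), but as a proof outline yours is faithful to the source.
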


Below, we will refer to $u$ as the semigroup solution, or equivalently as vanishing viscosity solution.
When referring to pointwise values of $u$, we tacitly take its $L^{1}$-representative such that the restriction map $t\mapsto u(t)\in L^{1}(dx)$ is continuous from the right and $u(t)$ is pointwise continuous from the right in $x$.\label{lab:normalizationu}

\subsection{\texorpdfstring{Decomposition of $u_{x}$ into wave measures}{Decomposition into wave measures}}
\label{Ss:decowave}

By the smallness assumption on the $\BV$ norm of the initial datum, one can assume that the eigenvalues of $A= \Dif f $ satisfy on $\Omega$
\[
\inf \lambda_{1}\leq \sup \lambda_{1}\  < \ \dots\  <\  \inf \lambda_{\dmns}\leq \sup \lambda_{\dmns} ,
\]
and the eigenvectors lie in different cones.
We then decompose as in the literature (see e.g.~pp.~93,201 of~\cite{Bressan}) the vector-valued measures $u_{x}$, $f(u)_{x}$ along the right eigenvectors of $A=\Dif f$. We adopt the following notation.
\begin{enumerate}
\label{eigenvectors}
\item
\label{eigenvectors:point1}
Under~\eqref{E:genNonl} we normalize the $\bar\imath $-th right eigenvector of $A(u)$ so that $\Dif\lambda_{\bar\imath} r_{\bar\imath} =1$, as in the proof of Theorem~\ref{T:sigmaLambda}: denote it by  $ \tilde r_{\bar\imath}(u) $.
Under~\eqref{E:linDeg} just take $\tilde r_{\bar\imath}(u)= r_{\bar\imath}(u)$, so that $| \tilde r_{\bar\imath} |=1$.

\item
We fix the left eigenvectors $\tilde l_{1}(u),\dots, \tilde l_{\dmns}(u)$ so that $\tilde l_{i}\cdot\tilde r_{j} =\delta_{ij}$ still holds for $i,j\in\{1,\dots,\dmns\}$.

\item
Given two values $u^{\pm}\in\Omega$, by the solution to the Riemann problem, briefly recalled in Theorem~\ref{T:sigmaLambda}, there exists $\sigma=(\sigma_{1},\dots,\sigma_{\dmns})\in\R^{\dmns}$ such that $u^{+}=\Lambda(\sigma)(u^{-})$.
We introduce the values
\begin{equation}
\label{E:omegai}
\omega_{0}=u^{-},
\qquad
\omega_{i}=\Psi(\sigma_{i})(\omega_{i-1})
\ i=1,\dots,\dmns.
\end{equation}
We define $\lambda_i(u^+,u^-)$ as the $i$-th eigenvalue of the averaged matrix
\[
A(\omega_{i},\omega_{i-1}) = \int_0^1 A\big(\theta \omega_{i} + (1-\theta)\omega_{i-1}\big)d\theta
\]
and $\tilde l_i(u^+,u^-)$, $\tilde r_i(u^+,u^-)$ vectors satisfying $\tilde l_i\cdot\tilde r_j =\delta_{ij} $ which are
\begin{itemize}
\item if $\sigma_{\bar\imath}<0$ and~\eqref{E:genNonl} holds, left$\backslash$right $\bar\imath $-eigenvectors of $A(\omega_{\bar\imath},\omega_{\bar\imath-1})$ normalized so that
\begin{subequations}
\label{E:sigmalu}
\begin{equation}
\label{E:sigmalushock}
\lambda_{\bar\imath}(\omega_{\bar\imath})-\lambda_{\bar\imath}(\omega_{\bar\imath-1})= \tilde l_{\bar\imath} \cdot(\omega_{\bar\imath}-\omega_{\bar\imath-1}).
\end{equation}
%{\color{blue}We recall that $\sigma_{\bar\imath}=\lambda_{\bar\imath}(\omega_{\bar\imath})-\lambda_{\bar\imath}(\omega_{\bar\imath-1}) $.
%The estimate $\OO|\sigma_{\bar\imath}|\leq|\lambda_{\bar\imath}(\omega_{\bar\imath})-\lambda_{\bar\imath}(\omega_{\bar\imath-1}) |\leq \OO |\sigma_{\bar \imath}|$ will allow us, with an abuse of notation, to}
%denote by $\sigma_{\bar\imath}$ also the value $\lambda_{\bar\imath}(\omega_{\bar\imath})-\lambda_{\bar\imath}(\omega_{\bar\imath-1}) $ and to call it \emph{strength} of the wave.
\item otherwise, $\tilde l_{i}(u^+,u^-)=\int_{0}^{1} \tilde l_{i}(\theta\omega_{i}+(1-\theta)\omega_{i-1}) $.
Then\footnote{%
Denote by $\sigma(u^{+};u^{-})$ the inverse function of $\sigma\mapsto u^{+}=\Lambda(\sigma;u^{-})$.
By the inverse function theorem $J\sigma(u^{+};u^{-})=(D\Lambda)^{-1}(\sigma(u^{+},u^{-});u^{-})$. If either the $i$-th component $\sigma_{i}$ is nonnegative or the $i$-th characteristic field is linearly degenerate, by the ODE satisfied by $\Psi_{i}$ one finds $\nabla\sigma_{i}(u^{+};u^{-})=\tilde l_{i}(u^{+})$ at least when $u^{+}=\Psi_{i}(\sigma_{i})(u^{-})$, being in that case $\dot\Psi_{i}(\sigma_{i})=\tilde r_{i}(\Psi_{i}(\sigma_{i}))$. Then by the fundamental theorem of calculus and the definition of $\omega_{0},\dots,\omega_{\dmns}$
\begin{align*}
\sigma_i(u^+,u^-) 
=\sigma_i(\omega_i,\omega_{i-1})
&= \int_0^1 \frac{d}{d\theta}\sigma_i\big(\omega_{i-1} + \theta(\omega_i-\omega_{i-1}), \omega_{i-1}\big)d\theta
\\
&= \int_0^1 \Big\{\tilde l_i\big(\omega_{i-1} + \theta(\omega_i-\omega_{i-1}), \omega_{i-1}\big) \cdot(\omega_i-\omega_{i-1}) \Big\}d\theta
= \tilde l_i \cdot(\omega_i-\omega_{i-1}).
\end{align*}%
}
\begin{equation}
\sigma_{i}= \tilde l_{i} \cdot(\omega_{i}-\omega_{i-1}).
\end{equation}
\end{subequations}
%Under~\eqref{E:genNonl}, for $\sigma_{\bar\imath}\geq0$ the choice at Point~(\ref{eigenvectors:point1}) implies then $\sigma_{\bar\imath}=\lambda_{\bar\imath}(\omega_{\bar\imath})-\lambda_{\bar\imath}(\omega_{\bar\imath-1})$.
\end{itemize}
We call \emph{strength} of the $i$-th wave the value
\[
\sigma_{i}
=\tilde l_{i}\cdot (\omega_{i}-\omega_{i-1}).
\]
In the genuinely non-linear case, by the parameterization choice it is equal to $\lambda_{\bar\imath}(\omega_{\bar\imath})-\lambda_{\bar\imath}(\omega_{\bar\imath-1}) $.
\begin{remark}
\label{R:RH}
Notice that, as in the proof of Theorem~\ref{T:sigmaLambda}, we defined $\lambda_{i}(u^{+},u^{-})$ as the Rankine-Hugoniot speed of the $i$-th wave of the Riemann problem $[u^-,u^+]$, in case it is a shock or a contact discontinuity: indeed since
\[f(\omega_{i})-f(\omega_{i-1})=\int_{0}^{1}(\Dif f(\omega_{i-1} +\theta(\omega_{i}-\omega_{i-1}))(\omega_{i}-\omega_{i-1}))d\theta=A(\omega_{i}, \omega_{i-1})(\omega_{i}-\omega_{i-1}),\]
then, if the Rankine-Hugoniot condition $f(\omega_{i})-f(\omega_{i-1})=\lambda_{i}(\omega_{i}-\omega_{i-1})(\omega_{i}-\omega_{i-1})$ holds, $\omega_{i}-\omega_{i-1}$ is an $i$-eigenvector of $A(\omega_{i}, \omega_{i-1})$ with eigenvalue $\lambda_{i}(u^{+},u^{-}) $. In case of rarefaction waves it is just an average speed, and we see below that in the wave front tracking construction one may choose this as the speed of the `artificial' jump, up to small perturbations.
\end{remark}
\end{enumerate}
We recall now the wave decomposition of $u_x$ into wave measures, and consequently of the flow $f(u)_x$. Let
\label{En:rllambda}
\begin{gather*}
\lambda_i(t,x) = \lambda_i\big(u(t,x^+), u(t,x^-)\big) .
\\
\tilde l_i(t,x)= \tilde l_i\big(u(t,x^+), u(t,x^-)\big) ,
\qquad\qquad
\tilde r_i(t,x) = \tilde r_i\big(u(t,x^+), u(t,x^-)\big) .
\end{gather*}
\begin{lemma}
Define the scalar measures $v_{i}:= \tilde l_i\cdot u_x$. Then the following decomposition holds
\begin{subequations}
\label{E:waveDec}
\begin{gather}
\label{E:waveDecx}
u_x = \sum_{i=1}^{\dmns} v_{i} \tilde r_i(t,x), 
\\ 
-u_{t}=f(u)_x =\sum_{i=1}^{\dmns} \lambda_i(t,x) v_{i} \tilde r_i(t,x).
\end{gather}
\end{subequations}
\end{lemma}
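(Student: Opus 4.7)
The plan is to verify both decompositions pointwise by splitting the vector measure $u_x$ into its continuous part $u_x^{\cont}$ (absolutely continuous plus Cantor) and its jump (atomic) part $u_x^{\jump}$, and then exploiting the biorthogonality $\tilde l_i(t,x) \cdot \tilde r_j(t,x) = \delta_{ij}$, which is the essential algebraic fact secured by the definitions on page above. Note first the linear-algebra identity: if $\{l_i\}, \{r_j\}$ are any two $N$-tuples in $\R^N$ with $l_i \cdot r_j = \delta_{ij}$, then the $r_j$ form a basis of $\R^N$ and $w = \sum_i (l_i \cdot w) r_i$ for every $w \in \R^N$. Since $\tilde l_i(t,\cdot)$ is a bounded Borel function, the scalar measure $v_i := \tilde l_i(t,\cdot)\cdot u_x$ is well-defined.

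For the continuous part, I would argue as follows. The jump set of the BV function $u(t,\cdot)$ is at most countable, hence $|u_x^{\cont}|$-negligible. At every other point $x$ one has $u(t,x^+) = u(t,x^-) = u(t,x)$, so $\tilde l_i(t,x), \tilde r_i(t,x)$ reduce by construction to the standard eigenvectors of $A(u(t,x))$, which still satisfy $\tilde l_i \cdot \tilde r_j = \delta_{ij}$. Applying the linear-algebra identity to the Radon--Nikodym density of $u_x^{\cont}$ (with respect to $|u_x^{\cont}|$) gives the decomposition~\eqref{E:waveDecx} on the continuous part.

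For the jump part, at each atom $x$ the measure $u_x^{\jump}$ carries the vector $\Delta u := u(t,x^+) - u(t,x^-)$. By Theorem~\ref{T:sigmaLambda} one has $u(t,x^+) = \Lambda(\sigma)(u(t,x^-))$, and~\eqref{E:sigmalu} defines a biorthogonal system $\tilde l_i(u^+,u^-), \tilde r_j(u^+,u^-)$; so again $\Delta u = \sum_i [\tilde l_i \cdot \Delta u] \tilde r_i$, which is exactly the atomic part of $\sum_i v_i \tilde r_i$. Summing the two contributions yields~\eqref{E:waveDecx}.

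For~\eqref{E:waveDec} applied to $f(u)_x$, on the continuous part one uses $f(u)_x = A(u) u_x$ (Vol'pert chain rule, justified because $u(t,\cdot)\in\BV$) and the eigenvalue equation $A(u)\tilde r_i(u) = \lambda_i(u)\tilde r_i(u)$ to obtain $\sum_i v_i \lambda_i(t,x)\tilde r_i(t,x)$ on the diffuse part. At each jump $x$, admissibility of the entropy solution forces the Rankine–Hugoniot condition with a single $\bar\imath$-family active, so by Remark~\ref{R:RH} $\omega_{\bar\imath}-\omega_{\bar\imath-1}=u^+-u^-=\sigma_{\bar\imath}\tilde r_{\bar\imath}$ is an eigenvector of $A(\omega_{\bar\imath},\omega_{\bar\imath-1})$ with eigenvalue $\lambda_{\bar\imath}(u^+,u^-)$, yielding $f(u^+)-f(u^-) = \sum_i \lambda_i(u^+,u^-)\sigma_i\tilde r_i$. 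Finally, $-u_t = f(u)_x$ is the conservation law itself. The only delicate point is the verification at jumps, which rests crucially on the fact that entropy admissibility rules out multi-wave jumps in $u(t,\cdot)$ so that the Rankine–Hugoniot identity can be invoked directly on $\Delta u$.
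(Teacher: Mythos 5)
Your proof is correct and takes essentially the same approach as the paper: biorthogonality $\tilde l_i \cdot \tilde r_j = \delta_{ij}$ for the first identity, then Vol'pert's chain rule on the diffuse part and Rankine--Hugoniot on the jump part for the second. The paper writes the jump contribution via the telescoping $f(u^+)-f(u^-)=\sum_i\bigl[f(\omega_i)-f(\omega_{i-1})\bigr]$ through the intermediate Riemann states and applies Remark~\ref{R:RH} term by term, while you invoke directly the single-shock structure of $\mathcal H^1$-a.e.\ jump point; both rest on the same fine structure of the jump set recalled in Theorem~\ref{T:pwconvergence} and reduce to the same computation.
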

\begin{proof} 
The first equation is a direct consequence of the fact that $\tilde r_{1},\dots,\tilde r_{\dmns}$ are linearly independent and $\tilde l_{i}\cdot \tilde r_{j}=\delta_{ij}$.
For justifying the second one, denote by the structure of $\BV$ functions (Sect.~3.7 of~\cite{AFP})
\[
u_{x}=\big[\Difxa u+ \Difxc u\big]+\sum_{h\in\N} \big[u(t,\gamma_{h}(t)^{+})-u(t,\gamma_{h}(t)^{-})\big] \Ll^{1}(dt),
\]
where $\Difxa u, \Difxc u $ are the absolutely continuous and Cantor part of $u_{x}$, while the sum is the jump part.
The values $u(t,\gamma_{h}(t)^{\pm})$ are indeed the values of the approximate jump at $(t,\gamma_{h}(t)^{})$ by a fine property of semigroup solutions recalled e.g.~in Theorem~\ref{T:pwconvergence} below.
Then by Volpert chain rule (Th.~3.99 of~\cite{AFP})
\begin{gather*}
\begin{split}
f(u)_x
&= \Dif f(u)\big[\Difxa u+ \Difxc u\big] + \sum_{h\in\N}\big[f(u(t,\gamma_{h}(t)^{+}))-f(u(t,\gamma_{h}(t)^{-}))\big] \Ll^{1}(dt)\restr{\Dom(\gamma_{h})}
\\
&\stackrel{\eqref{E:omegai}}{=} A(u)\big[\Difxa u+ \Difxc u\big] + \sum_{h\in\N}
\sum_{i=1}^{\dmns} 
\big[f(\omega_{i}(t,\gamma_{h}(t)^{+}))-f(\omega_{i-1}(t,\gamma_{h}(t)^{-}))\big] \Ll^{1}(dt)\restr{\Dom(\gamma_{h})}
\\
&\stackrel{\eqref{E:waveDecx}}{=}
\sum_{i=1}^{\dmns} \lambda_i(t,x)( v_{i})_{\cont} \tilde r_i(t,x) + \sum_{i=1}^{\dmns} \lambda_i(t,x)( v_{i})_{\jump} \tilde r_i(t,x)
=\sum_{i=1}^{\dmns} \lambda_i(t,x) v_{i} \tilde r_i(t,x),
\end{split}
\end{gather*}
where we also applied in the last step that each $\tilde r_{i}(u)$ is a right eigenvector of $A(u)$, and the Rankine-Hugoniot conditions of Remark~\ref{R:RH} for the jump part.
\end{proof}

%:
%: Argomento SBV
%:
\section{\texorpdfstring{Main $\SBV$ regularity argument}{Main SBV regularity argument}}
\label{S:mainArg}

Given a semigroup solution $u$, the wave decomposition~\eqref{E:waveDec} of $u_{x}$ along $\tilde r_{1},\dots,\tilde r_{\dmns}$ reduces the vectorial problem to scalar ones. In order to check that there is no Cantor part in $ u_{x}$, by~\eqref{E:waveDec} we will indeed check that there is no Cantor part in each $v_{i}$.

The argument here is based on piecewise constant front-tracking approximations of the semigroup solution $u$, some recall is provided in Section~\ref{S:frontTrackingApp}.
The magnitude of waves of each $\nu$-front-tracking approximation may change in time only when discontinuity lines meet, and waves interact. In~\cite{Bressan}, Section 7.6, an approximate conservation principle for wave strengths is expressed by introducing two finite measures concentrated on interaction points of physical waves: the \emph{interaction} and \emph{interaction-cancellation} measures
\begin{equation}
\label{E:intrInterCanc}
\mu_{\nu}^{I}(\{P\})=|\sigma' \sigma''|
\qquad
\mu_{\nu}^{IC}(\{P\})=|\sigma'\sigma''|+\begin{cases} |\sigma'|+|\sigma''|-|\sigma'+\sigma''| & i=i'\\ 0 & i\neq i' \end{cases} .
\end{equation}
Above $\sigma'$, $\sigma''$ denote the incoming strengths of the two physical waves interacting at $P$.\\
\begin{figure}[ht!] 
   \centering
   \def\svgwidth{.6\columnwidth} % sets the image width, this is optional
   \input{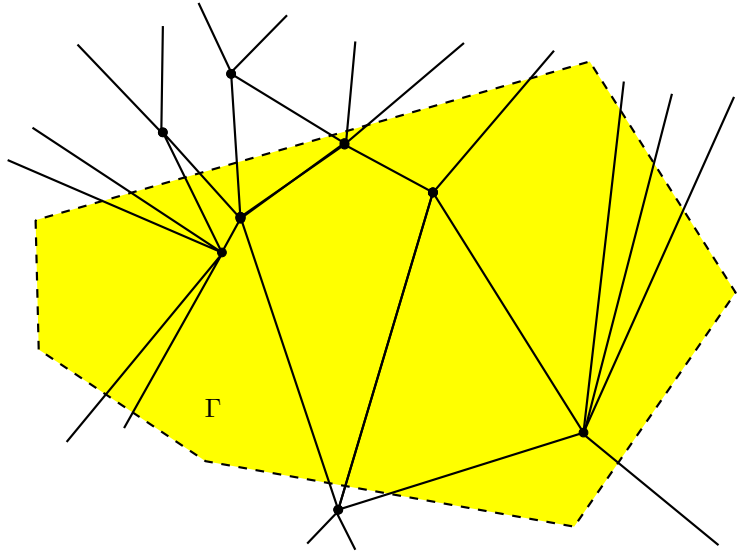}
\caption[Balance of waves.]{The yellow area represents the region $\Gamma$ in the plane $(x,t)$, waves enter and exit.}
\label{fig:immagineBalance}
\end{figure}
More precisely, given a polygonal region $\Gamma$ with edges transversal to the waves it encounters (Figure~\ref{fig:immagineBalance}), one considers the total amount $W^{i\pm}_{\inn}$, $W^{i\pm}_{\out}$ of positive and negative $i$-waves entering and exiting $\Gamma$. Considering the `fluxes' (the amount of positive$\backslash$negative $i$th-waves entering$\backslash$exiting $\Gamma$)
\[
W^{i}_{\inn}= W^{i+}_{\inn} - W^{i-}_{\inn}
\qquad
W^{i}_{\out}= W^{i+}_{\out} - W^{i-}_{\out},
\qquad i=1,\dots,N ,
\]
then one has the upper bounds
\[
|W^{i}_{\out}- W^{i}_{\inn}| \leq C\mu^{I}(\Gamma)
\qquad
|W^{i\pm}_{\out}- W^{i\pm}_{\inn}| \leq C\mu^{IC}(\Gamma) .
\]

Since the above measures satisfy uniform estimates w.r.t.~$\nu$, by compactness one can define measures $\mu^{I}$, $\mu^{IC}$ on the entropy solution by taking $w^{*}$-limits of the corresponding measures on a suitable sequence of $\nu$-front-tracking approximations.

Assume that the $i$-th characteristic field is genuinely non-linear,~\eqref{E:genNonl}.
A balance of the kind above will lead in Lemma~\ref{L:decaEstOnIntervals} below to the following estimate on a semigroup solution $u$: the density w.r.t.~time of the continuous part $(v_{\bar\imath})_{\cont}$ of $v_{\bar\imath}=\tilde l_{\bar\imath}\cdot u_{x}$ is controlled at time $s$ by the Lebesgue measure and by a finite measure on a horizontal strip around $s$.
This estimate is analogous to the decay of positive waves proved in Theorem 10.3 of~\cite{Bressan}.
We state it here, postponing the proof to Section~\ref{S:mainEstimate}.
\begin{theorem}
\label{T:continuousEstimate}
Suppose~\eqref{E:genNonl} holds for the $\bar\imath$-th characteristic field.
Denoting
\[
v_{\bar\imath}=\int \big[v_{\bar\imath}(t) \big]dt,
\qquad
v_{\bar\imath}(t)=(v_{\bar\imath}(t))_{\cont}+(v_{\bar\imath}(t))_{\jump}:
\quad\text{$(v_{\bar\imath}(t))_{\jump}$ purely atomic,} 
\]
then there exists a finite, nonnegative Radon measure $\mu_{\bar\imath}^{ICJ}$ on $\R^{+}\times\R$ such that for $s>\tau>0$
\begin{align}
\label{E:decayEst}
\OO\Big\{\Ll^{1}(B)/\tau+\mu_{\bar\imath}^{ICJ}([s-\tau,s+\tau]\times\R) \Big\}
&\geq 
|(v_{\bar\imath}(s))_{\cont}|\big(B\big)
\qquad
\forall B\text{ Borel subset of }\R
.
\addtocounter{equation}{1}
\tag{$\theequation:{\bar\imath}$}
\end{align}
\end{theorem}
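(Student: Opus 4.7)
The plan is to prove the estimate first at the level of wave-front tracking approximations $u^{\nu}$ and then pass to the weak-$^{*}$ limit using the convergence results of Section~\ref{S:frontTrackingApp}. At the approximate level I would define
\[
\mu_{\bar\imath}^{ICJ,\nu}:=\mu^{IC}_{\nu}+\sum_{i=1}^{\dmns}|\mu_{i,\jump}^{\nu}|,
\]
where $\mu^{IC}_{\nu}$ is the interaction-cancellation measure in~\eqref{E:intrInterCanc} and the $\mu_{i,\jump}^{\nu}$ are the jump balance measures of Proposition~\ref{P:mu_estimate}, each uniformly bounded in $\nu$, so that a subsequential weak-$^{*}$ limit $\mu_{\bar\imath}^{ICJ}$ exists and is finite.

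Writing $|v_{\bar\imath}(s)_{\cont}|=v_{\bar\imath,\cont}^{+}(s)+v_{\bar\imath,\cont}^{-}(s)$, the positive part is handled as in Bressan's Theorem~10.3: following maximal $\bar\imath$-characteristics forward from $s-\tau$ to $s$ and using $\Dif\lambda_{\bar\imath}\tilde r_{\bar\imath}=1$, one gets $v_{\bar\imath,\cont}^{+}(s)(B)\le\OO\{\Ll^{1}(B)/\tau+\mu^{IC}([s-\tau,s]\times\R)\}$, so this half of the bound is already known.

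The new ingredient is the symmetric estimate for $v_{\bar\imath,\cont}^{-}(s)$, using information from the forward slab $[s,s+\tau]$. I fix a finite pairwise disjoint collection of intervals $I_{k}=[a_{k},b_{k}]$ approximating $B$ from inside, and from each endpoint I emit a minimal forward-in-time $\bar\imath$-characteristic $a_{k}(t),b_{k}(t)$. Applying the wave balance of Lemma~\ref{L:waveBalances} on the curvilinear trapezoid $\Gamma_{k}=\{(t,x)\colon s\le t\le s+\tau,\ a_{k}(t)\le x\le b_{k}(t)\}$, by minimality the $\bar\imath$-waves crossing the lateral sides of $\Gamma_{k}$ are controlled by $\mu^{IC}(\Gamma_{k})+|\mu_{\bar\imath,\jump}^{\nu}|(\Gamma_{k})$, and waves of the other families contribute only to $\mu^{IC}(\Gamma_{k})$. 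Genuine nonlinearity then gives
\[
\tfrac{d}{dt}\bigl(b_{k}(t)-a_{k}(t)\bigr)=\tilde\lambda_{\bar\imath}^{\nu}(b_{k}(t))-\tilde\lambda_{\bar\imath}^{\nu}(a_{k}(t))=v_{\bar\imath}^{\nu}(t)\bigl((a_{k}(t),b_{k}(t))\bigr)+\OO\bigl(\mu^{IC}(\Gamma_{k})\bigr),
\]
so a large negative value of $v_{\bar\imath,\cont}^{-}(I_{k})$ forces $b_{k}(t)-a_{k}(t)$ to vanish at some time $\le s+\tau$, yielding $|v_{\bar\imath,\cont}^{-}|(I_{k})\lesssim (b_{k}-a_{k})/\tau\le\Ll^{1}(I_{k})/\tau$; otherwise the negative continuous mass has meanwhile been converted into an atom or cancelled, and hence is bounded by $\mu_{\bar\imath}^{ICJ,\nu}(\Gamma_{k})$.

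Summing over $k$ and taking the supremum over finite subpartitions of $B$ gives the estimate for $u^{\nu}$; passing to the limit through the weak convergences $v_{i}^{\nu}\rightharpoonup v_{i}$ and $v_{i,\jump}^{\nu}\rightharpoonup v_{i,\jump}$ (Corollaries~\ref{C:corAprroximation},~\ref{C:hausdorfpendenze},~\ref{C:weakConvergences}) together with outer regularity in the Borel set $B$ yields the statement for the semigroup solution. The main obstacle I foresee is keeping, in the bookkeeping, the forward cones $\Gamma_{k}$ from piling up: minimality of $a_{k}(t),b_{k}(t)$ should give interior disjointness of the $\Gamma_{k}$, so that $\sum_{k}\mu_{\bar\imath}^{ICJ,\nu}(\Gamma_{k})\le\mu_{\bar\imath}^{ICJ,\nu}([s,s+\tau]\times\R)$ without any multiplicity blow-up. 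A secondary subtle point is that a negative continuous wave can also vanish by being absorbed into an already existing shock, rather than by closing a pair of characteristics: this is precisely the reason the jump balance measures $\mu_{i,\jump}^{\nu}$, and not only the interaction-cancellation measure, must appear on the right-hand side.
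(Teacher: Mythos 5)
Your proposal matches the paper's own strategy: bound $(v_{\bar\imath}(s))_{\cont}^{+}$ by Bressan's decay of positive waves, prove the new bound on $(v_{\bar\imath}(s))_{\cont}^{-}$ at the front-tracking level via the dichotomy ``characteristics from $[a_k,b_k]$ collapse before $s+\tau$'' versus ``the balance on the trapezoid $\Gamma_k$ (Lemma~\ref{L:waveBalances}) shows the negative continuous mass was converted into shocks or cancelled, hence controlled by $\mu^{ICJ}_{\nu}(\Gamma_k)$'', sum over finitely many disjoint intervals, and pass to the weak-$^{*}$ limit with outer regularity. The only inessential deviation is that you set $\mu^{ICJ}_{\bar\imath,\nu}=\mu^{IC}_{\nu}+\sum_{i}|\mu^{\nu}_{i,\jump}|$ rather than the paper's $\mu^{IC}_{\nu}+|\mu^{\nu}_{\bar\imath,\jump}|$, which only enlarges the right-hand side and leaves the estimate valid.
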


The Radon measure $\mu_{\bar\imath}^{ICJ}$ is defined in the next section as a $w^{*}$-limit of measures~\eqref{E:muICJinu} relative to front-tracking approximations.
It takes into account both general interactions and cancellations and also balances for exclusively the jump part of $v_{\bar \imath}$.
In the statement $v_{\bar\imath}(s)$ is the representative continuous from the right, coinciding with $\tilde l_{i}\cdot\big(u(s))_{x}$.

Following an argument already in~\cite{ambDel}, the estimate~\eqref{E:decayEst} allows to prove that $v_{\bar \imath}$ is made by merely a jump part concentrated on the graphs of countably many time-like Lipschitz curves and a part absolutely continuous w.r.t.~the Lebesgue measure.
Indeed, $(v_{\bar\imath})_{\cont}$ is the integral w.r.t.~$\Ll^{1}(dt)$ of $(v_{\bar\imath}(t))_{\cont}$: if $|(v_{\bar\imath}(s))_{\cont}|\big(B\big)>0$ at some time $s$ for an $\Ll^{1}$-negligible set $B$, then by~\eqref{E:decayEst} the time marginal of $\mu^{ICJ}_{\bar\imath}$ has an atom at $s$, which may happen only countably many times.

\begin{corollary} 
\label{C:viSBV}
Let $u$ be a semigroup solution of the Cauchy problem for the strictly hyperbolic system~\eqref{E:consLaw}.
Consider the $\bar\imath$-wave measure $v_{\bar\imath}=\tilde l_{\bar\imath}\cdot u_{x}$. If~\eqref{E:genNonl} holds, then $v_{\bar \imath}$ has no Cantor part.
\end{corollary}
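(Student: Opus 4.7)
The plan is to carry out in full the short argument sketched in the paragraphs just above the statement, which reduces the corollary to measure-theoretic bookkeeping on top of the decay estimate~\eqref{E:decayEst}. Concretely, I aim to prove that the set
\[
T=\bigl\{s>0:(v_{\bar\imath}(s))_{\cont}\text{ has a non-trivial Cantorian part on }\R\bigr\}
\]
is at most countable. Outside $T$ the $1$D measure $v_{\bar\imath}(t)$ is the sum of the absolutely continuous measure $(v_{\bar\imath}(t))_{\cont}$ and of the purely atomic jump measure $(v_{\bar\imath}(t))_{\jump}$, so it carries no Cantorian component, which is exactly the corollary.

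First I would fix $s\in T$: by the Lebesgue decomposition there exists a Borel set $B\subset\R$ with $\Ll^{1}(B)=0$ and $c:=|(v_{\bar\imath}(s))_{\cont}|(B)>0$. Plugging this $B$ into the estimate~\eqref{E:decayEst} of Theorem~\ref{T:continuousEstimate}, and writing $C$ for the constant hidden in the $\OO$, gives, for every $\tau\in(0,s)$,
\[
C\biggl(\frac{\Ll^{1}(B)}{\tau}+\mu_{\bar\imath}^{ICJ}\bigl([s-\tau,s+\tau]\times\R\bigr)\biggr)\ \geq\ c.
\]
Since $\Ll^{1}(B)=0$, this collapses to the $\tau$-uniform lower bound $\mu_{\bar\imath}^{ICJ}([s-\tau,s+\tau]\times\R)\geq c/C$.

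Next I would let $\tau\to 0^{+}$. By continuity from above of the finite Radon measure $\mu_{\bar\imath}^{ICJ}$ on the decreasing family of closed strips $[s-\tau,s+\tau]\times\R\downarrow\{s\}\times\R$, one gets $\mu_{\bar\imath}^{ICJ}(\{s\}\times\R)\geq c/C>0$. Equivalently, the time marginal of $\mu_{\bar\imath}^{ICJ}$, a finite positive Borel measure on $\R^{+}$, has an atom at every $s\in T$. A finite positive measure admits at most countably many atoms, hence $T$ is at most countable, as wanted.

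I do not foresee significant obstacles in the corollary itself, since the entire analytic content is packed into Theorem~\ref{T:continuousEstimate}, whose proof is deferred to Section~\ref{S:mainEstimate}. The only delicate point above is the elementary observation that a $1$D Radon measure fails to be $\Ll^{1}$-absolutely continuous precisely when some $\Ll^{1}$-negligible Borel set carries positive mass for it: this is what produces the witness set $B$ on which~\eqref{E:decayEst} is tested and, together with the finiteness of $\mu_{\bar\imath}^{ICJ}$, is what upgrades the pointwise estimate to the global countability statement on $T$.
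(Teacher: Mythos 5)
Your proposal is correct and follows essentially the same route as the paper's own proof of Corollary~\ref{C:viSBV} (which is presented jointly with Corollary~\ref{C:mainSBV}): produce an $\Ll^{1}$-null witness set on which the non-atomic part of $v_{\bar\imath}(s)$ has positive mass, feed it into~\eqref{E:decayEst}, let $\tau\downarrow 0$ using finiteness of $\mu_{\bar\imath}^{ICJ}$ to extract an atom of the time marginal at $s$, and invoke countability of atoms of a finite measure. The only small difference is cosmetic: you argue directly on $(v_{\bar\imath}(s))_{\cont}$, while the paper picks a compact $K$ with $v_{\bar\imath}(s)(K)>0$ containing no atoms and then notes $|v_{\bar\imath}(s)|(K)=|(v_{\bar\imath}(s))_{\cont}|(K)$; the paper also makes the final step — that the countability of $T$ kills the Cantor part of the two-dimensional measure $v_{\bar\imath}$ — explicit via the slicing disintegration $v_{\bar\imath}=\int v_{\bar\imath}(t)\,dt$, whereas you leave that implicit by appealing to the paragraph preceding the statement.
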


If all the characteristic fields are genuinely non-linear, by the wave decomposition~\eqref{E:waveDec} the above estimate yields then $\SBV_{}([0,T]\times\R;\R^{\dmns})$ regularity of $u$ for all $T>0$.

\begin{corollary}
\label{C:mainSBV}
Let $u$ be the semigroup solution of the Cauchy problem
\[
\begin{cases}
u_t + f(u)_x = 0, 
\\
u(t=0)=\bar u
\end{cases}
\qquad u :\R^+\times\R\to \Omega\subset\R^\dmns,
\qquad f\in C^{2}(\Omega;\R^{\dmns})
\]
for a strictly hyperbolic system of conservation laws where each characteristic field is genuinely non-linear, with initial datum $\bar u$ small in $\BV(\R;\Omega)$.
Then $u(t)\in\SBV(\R;\Omega)$ out of at most countably many times.
\end{corollary}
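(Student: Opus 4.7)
The statement is almost an immediate consequence of the componentwise result in Corollary~\ref{C:viSBV}, once one combines it across the $N$ characteristic families and inverts the wave decomposition~\eqref{E:waveDec}. My plan is first to upgrade Corollary~\ref{C:viSBV} to a \emph{timewise} statement (in fact this upgrade is the content of the paragraph preceding it), and then to pass from the no-Cantor-part property of each scalar wave measure $v_i(t)$ to the no-Cantor-part property of $u(t)_x$.

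\textbf{Step 1: timewise absence of Cantor part for each family.} Fix an index $\bar\imath\in\{1,\dots,N\}$. By hypothesis~\eqref{E:genNonl} holds for the $\bar\imath$-th family, so Theorem~\ref{T:continuousEstimate} applies and produces the finite Radon measure $\mu_{\bar\imath}^{ICJ}$ on $\R^+\times\R$ satisfying~\eqref{E:decayEst}. Letting $\tau\downarrow 0$ in~\eqref{E:decayEst} for an $\Ll^1$-negligible Borel set $B$, one sees that if $|(v_{\bar\imath}(s))_{\cont}|(B)>0$ then the time marginal $\pi_\#\mu_{\bar\imath}^{ICJ}$ has an atom at $s$. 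Since $\mu_{\bar\imath}^{ICJ}$ is finite, $\pi_\#\mu_{\bar\imath}^{ICJ}$ has at most countably many atoms, and thus there is a countable set $T_{\bar\imath}\subset\R^+$ such that for every $t\notin T_{\bar\imath}$ the measure $(v_{\bar\imath}(t))_{\cont}$ is absolutely continuous with respect to $\Ll^1$. Consequently, for such $t$ the scalar measure $v_{\bar\imath}(t)=(v_{\bar\imath}(t))_{\cont}+(v_{\bar\imath}(t))_{\jump}$ has no Cantor part.

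\textbf{Step 2: passing from $v_i(t)$ to $u(t)_x$.} Set $T:=\bigcup_{i=1}^N T_i$, which remains countable. Fix $t\notin T$ and consider the decomposition~\eqref{E:waveDec},
\[
u(t)_x=\sum_{i=1}^{\dmns} v_i(t)\,\tilde r_i(t,\cdot).
\]
Let $D^c u(t)$ denote the Cantor part of the $\R^\dmns$-valued measure $u(t)_x$. Since $\tilde l_i(t,\cdot)$ is a bounded Borel vector field, the scalar measure $\tilde l_i(t,\cdot)\cdot u(t)_x=v_i(t)$ splits compatibly with the Lebesgue decomposition, so $(v_i(t))^c=\tilde l_i(t,\cdot)\cdot D^c u(t)$. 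By Step~1 the left-hand side vanishes for every $i=1,\dots,N$, hence
\[
\tilde l_i(t,x)\cdot D^c u(t)=0\qquad\text{for $|D^c u(t)|$-a.e.\ $x$ and every $i$.}
\]
The Cantor part $D^c u(t)$ is concentrated on the complement of the (countable) jump set $J_{u(t)}$, where $\tilde l_i(t,x)=\tilde l_i(u(t,x))$ and $\{\tilde l_i(u(t,x))\}_{i=1}^\dmns$ is a basis of $\R^\dmns$ by strict hyperbolicity. Therefore $D^c u(t)=0$, i.e.\ $u(t)\in\SBV(\R;\Omega)$ for every $t\notin T$.

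\textbf{Main obstacle.} All the substantive analytical work sits in Theorem~\ref{T:continuousEstimate}, which is assumed here. Given that, the only delicate point in the argument above is the compatibility of the wave decomposition with the measure-theoretic Lebesgue decomposition: one has to be sure that multiplying $u(t)_x$ by the bounded Borel field $\tilde l_i(t,\cdot)$ preserves the Cantor part, and that the coefficients $\tilde l_i(t,\cdot)$ on the support of $D^c u(t)$ (which lies off the jump set) really reduce to the smooth frame $\tilde l_i(u(t,\cdot))$, so that linear independence can be invoked pointwise. Once these two observations are in place, the countability bookkeeping (finite union of countable sets $T_i$) is routine.
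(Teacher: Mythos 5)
Your proof is correct and follows essentially the same route as the paper: both arguments deduce from Theorem~\ref{T:continuousEstimate} that a Cantor part in $v_{\bar\imath}(s)$ forces an atom of the time marginal of $\mu_{\bar\imath}^{ICJ}$ at $s$ (hence a countable bad set of times), and then transfer the no-Cantor-part conclusion from the scalar wave measures $v_i(t)$ back to $u(t)_x$ via the decomposition~\eqref{E:waveDec}. The only notable difference is that you spell out the linear-algebra/Lebesgue-decomposition step (bounded Borel multiplication preserving the Cantor part, and linear independence of the frame $\{\tilde l_i\}$ off the atomless jump set) which the paper compresses into the single remark that a Cantor part appears in some $v_{\bar\imath}(s)$ precisely when $u(s)_x$ has one.
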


\begin{proof}[Proof of Corollaries~\ref{C:viSBV} and~\ref{C:mainSBV}]
By the general theory on $1$-dimensional systems of conservation laws, $u$ belongs to $\BV_{}([0,T]\times\R;\Omega)$ for all $T>0$.
By the structure theorem of $\BV$ functions (see~Sect.~3.7 of~\cite{AFP}), the derivative of $u$ can be decomposed into a jump part, concentrated on a $1$-countably rectifiable set, a part which is absolutely continuous w.r.t.~the Lebesgue measure, and a remaining part---the Cantor part---which is singular w.r.t.~$\Ll^{2}$ and vanishes on sets having finite Hausdorff $1$-dimensional measure:
\[
\Dif u= \nabla u\,\Ll^{2} + \Difc u + \Difj u .
\] 
We want to prove that the Cantor parts $\Diftc u$, $\Difxc u$ of the both components $u_{t},u_{x}$ of $\Dif u$ vanish.
%---which means that $u_{t}-D^{j}_{t}u$, $u_{x}-D^{j}_{x}u$ vanish on Borel sets having Hausdorff dimension strictly between $1$ and $2$.

Denote by $u(t)$ the $1$-space-dimensional restriction of $u$ at time $t$, which is a function of $x$.
By the slicing theory of $\BV$ functions (Theorems~3.107-108 in~\cite{AFP}), not only
\begin{equation}
\label{E:slice1}
u_{x}=\int u(t)_{x} dt ,
\end{equation}
but moreover the Cantor part of $u_{x}$ is the integral, w.r.t.~$\Ll^{1}(dt)$, of the Cantor part of $u(t)$: one has also the disintegration
%up to taking a suitable pointwise representative in the $L^{1}$-class of $u$
\[
\Difxc u=\int \Difc\big(u(t)\big) dt.
\]
Since, moreover, by combining Volpert chain rule (Th.~3.99 of~\cite{AFP}) and the conservation law one has
\[
\Diftc u = f'(u) \Difxc u ,
\]
it suffices to show that for $\Ll^{1}$-a.e.~$t$ the function $u(t)$ belongs to $\SBV(\R;\R^{\dmns})$.

If one sets $v_{i}(t)= \tilde l_i\cdot u(t)_x $, then by the slicing~\eqref{E:slice1} and as in the wave decomposition~\eqref{E:waveDec} one derives
\[
v_{i} = \int v_{i}(t) dt
\qquad i=1,\dots,\dmns,
\qquad
u(t)_x=\sum_{i=1}^{\dmns} \tilde r_{i}(t,x) v_{i}(t) .
\]
As a consequence, there is a Cantor part at time $s$ in some $v_{\bar\imath}(s)$ precisely when $u(s)_{x}$ has a Cantor part.

It remains then to prove that~\eqref{E:genNonl} implies that $v_{\bar\imath}(s)$ has no Cantor part for $\Ll^{1}$-a.e.~$s$.
By Theorem~\ref{T:continuousEstimate}, the assumption of genuine nonlinearity~\eqref{E:genNonl} implies that the estimate~\eqref{E:decayEst} holds for the $\bar\imath$-th characteristic field.
The fact that $v_{\bar\imath}(s)$ has a Cantor part means that there exists an $\Ll^{1}$-negligible compact set $K$ with $v_{\bar\imath}(s)(K)>0$ and with no atom of $v_{\bar\imath}(s)$.
Then for all $s>\tau>0$
\begin{align*}
0< |v_{\bar\imath}(s)|(K)
&\stackrel{\phantom{\eqref{E:decayEst}}}{=} 
 |(v_{\bar\imath}(s))_{\cont}|(K)
\\
&\stackrel{\eqref{E:decayEst}}{\leq} 
\OO\Big\{
	\Ll^{1}(K)/\tau
	+\mu_{\bar\imath}^{ICJ}\big([s-\tau,s+\tau]\times\R \big) \Big\} 
	.
\end{align*}
As we are taking $\Ll^{1}(K)=0$, by outer regularity of Borel measures when $\tau\downarrow0$ this means that
\[
\mu_{\bar\imath}^{ICJ}\big(\{s\}\times\R \big)>0.
\]
It is indeed true that if $\Ll^{1}(K)=0$ then $\mu_{\bar\imath}^{ICJ}(\{s\}\times K)\geq  \OO|v_{\bar\imath}(s)_{\cont}|(K)>0$.
As the above measures are locally finite, this can thus happen at most countably many times: for all other times $t$ the continuous part of $v_{\bar\imath}(t)$ is absolutely continuous.

As a consequence, if~\eqref{E:genNonl} holds for all $\bar\imath=1,\dots,\dmns$ then $u(t)\in\SBV(\R;\R^{\dmns})$ out of countably many times, the times when the time marginal of anyone of the various $\mu^{ICJ}_{i}$ has a jump, $i=1,\dots,\dmns$. 
This yields the membership of $u$ in $\SBV_{\loc}(\R^{+}\times\R;\R^{\dmns})$.
\end{proof}

%:
%: Richiami sul front tracking
%:
\section{Recalls on the approximation by front-tracking solutions}
\label{S:frontTrackingApp}
We recall in this section a result about the convergence of a suitable sequence of $\nu$-approximate front-tracking solutions (Pages~219-220 in~\cite{Bressan}).

The $\nu$-approximate front-tracking solutions $\{u^{\nu}\}_{\nu}$ are, roughly speaking, piecewise-constant functions obtained approximating by a step function the initial data, (approximatively) solving Riemann problems at discontinuity points and piecing together these solutions until the time they interact and discontinuity lines cross each other: at that time the procedure starts again.
By the construction, which allows small perturbations of the speed, only two discontinuity lines are allowed to cross at one time.
Each outgoing $i$-rarefaction wave $[u^{-}, u^{+}= \Psi_i(\sigma)(u^{-})]$ in the approximate solution of the Riemann problems is decomposed into small jumps $[\omega_{h}, \omega_{h+1}=\Psi_i(\sigma_{h})(\omega_{h})]$ of strength $\sigma_{h}$ at most $\nu$; among the various possibilities, we let the $h$-th jump $[\omega_{h}, \omega_{h+1}]$ travel with the mean speed $\lambda_{i}(u^{+},u^{-})$.
% $[\lambda(\omega_{h})+ \lambda(\omega_{h+1}) ]/2$.
In order to control the number of discontinuity lines, if the interacting wave fronts are small enough a simplified Riemann solver is used, which leaves unchanged the size of the incoming waves introducing a non-physical wave front traveling with fixed speed higher then $\lambda_{\dmns}$; the total size of non-physical waves is controlled at each time by a constant $\varepsilon_{\nu}$.

At each time $t$, the restriction $u^{\nu}(t)$ is a step function: its derivative consists of finitely many deltas.
Below one can see that if one fixes suitable thresholds, it is possible to group these deltas in two families: up to subsequences, the largest of them converge in the $\nu$-limit to the jumps of the entropy solution $u^{}$ at time $t$, for $\Ll^{1}$-a.e.~time, while the others tail off up to the remaining continuous part of $u(t)_{x}$.
We distinguish the jumps, excluding interaction times, depending on their characteristic family.
\\
Looking at the $(t,x)$-variables instead of time-restrictions of $u^{\nu}$, the derivative of $u^{\nu}$ is concentrated on polygonal lines and consists only of the jump part.
Nevertheless, these broken lines can be grouped as follows in order to distinguish those converging to the jump set of $u$ and those part of $u^{\nu}_{x}$ converging to the continuous part of $u_{x}$.

\begin{definition}[Maximal $(\varepsilon_{0},\varepsilon_{1})$-shock front]
\label{D:max_epsilon_shock}
A \emph{maximal $(\varepsilon_{0},\varepsilon_{1})$-shock front of the $i$-th family of a $\nu$-approximate front-tracking solution $u^{\nu}$} is any maximal (w.r.t.~inclusion) polygonal line $\big(t,\gamma(t)\big)$ in the $(t,x)$-plane, with ${t^{-}\leq t\leq t^{+}}$,
\begin{itemize}
\item[-]
whose segments are $i$-shocks of $u^{\nu}$ with strength $|\sigma|\geq \varepsilon_{0}$, and at least once $|\sigma|\geq \varepsilon_{1} >\varepsilon_{0}$;
\item[-]
whose nodes are interaction points of $u^{\nu}$;
\item[-]
which is on the left of any other polygonal line it intersects and having the above properties.
\end{itemize}
\end{definition}
The family of $i$-maximal $(\varepsilon_{0},\varepsilon_{1})$-shock fronts of $u^{\nu}$ is totally ordered by the displacements of the polygonal lines on $\R\times\R^{+}$, and, up to extracting a subsequence, we are allowed to assume that its cardinality is a constant $M^{i}_{(\varepsilon_{0},\varepsilon_{1})}$ independent of $\nu$.
Let $\gamma^{\nu,i}_{(\varepsilon_{0},\varepsilon_{1}),m}:[t^{\nu,i,-}_{(\varepsilon_{0},\varepsilon_{1}),m},t^{\nu,i,+}_{(\varepsilon_{0},\varepsilon_{1}),m}]\to\R$ denote the uniformly Lipschitz paths of $i$-maximal $(\varepsilon_{0},\varepsilon_{1})$-shock fronts in $u_\nu$ and consider their graphs
\[
\mathcal J^{\nu,i}_{(\varepsilon_{0},\varepsilon_{1})}
=
\bigcup_{m=1}^{M^{i}_{(\varepsilon_{0},\varepsilon_{1})}} \Graph\big( \gamma^{\nu,i}_{(\varepsilon_{0},\varepsilon_{1}),m}\big).
\]
Notice that the set $\mathcal J^{\nu,i}_{(\varepsilon_{0},\varepsilon_{1})} $ enlarges as $(\varepsilon_{0},\varepsilon_{1})$ goes to $0$.

Consider sequences $0<2^{k}\varepsilon_{0}^{k} \leq \varepsilon_{1}^{k} \downarrow 0 $, where $\varepsilon_{0}^{k} $ is not necessarily a power but has just an apex $k$.
Up to subsequences and a diagonal argument, by a suitable labeling of the curves one can assume that for each $i,k,m$ fixed the Lipschitz curves $\gamma ^{\nu,i}_{(\varepsilon_{0}^{k},\varepsilon_{1}^{k}),m}$ converge uniformly
\begin{subequations}
\label{EG:jumpsconv}
\begin{equation}
\label{E:shocksconv}
t^{\nu,i,\pm}_{(\varepsilon_{0}^{k},\varepsilon_{1}^{k}),m}
\to 
t^{i,\pm}_{(\varepsilon_{0}^{k},\varepsilon^{k}_{1}),m} ,
\qquad
\gamma ^{\nu,i}_{(\varepsilon^{k}_{0},\varepsilon^{k}_{1}),m}
\to
 \gamma^{i}_{(\varepsilon_{0}^{k},\varepsilon^{k}_{1}),m}
\qquad
\text{as $\nu\to\infty$}
\end{equation}
to \emph{distinct} Lipschitz curves which cover the jump set of $u$: out of countably many points in the $(x,t)$-plane, either $u$ is continuous, and equal to the pointwise limit of $u^{\nu}$ at that point, or has jump.
In the case of a jump, the jump point belongs to the graph of some $\gamma^{i}_{(\varepsilon_{0}^{k},\varepsilon^{k}_{1}),m}$ for a suitable triple $i,k,m$; moreover, the left$\backslash$right limits of the front-tracking approximation at the jump curve $\gamma ^{\nu,i}_{(\varepsilon^{k}_{0},\varepsilon^{k}_{1}),m} $ converge, for $m,k$ fixed and $\nu\to\infty$, to the left$\backslash$right limits of $u$ at that jump.
Below one finds the precise statement.

\begin{theorem}[Th.~10.4 in~\cite{Bressan}]
\label{T:pwconvergence}
The jump part of $u$ is concentrated on the graphs of  $\gamma ^{i}_{(\varepsilon_{0}^{k},\varepsilon_{1}^{k}),m}$,
\[
\mathcal J = \bigcup_{i=1}^{\dmns} \bigcup_{ m, k}\Graph\big(\gamma ^{i}_{(\varepsilon^{k}_{0},\varepsilon^{k}_{1}),m} \big).
\]
Moreover,  $u$ is continuous and equal to the pointwise limit of $u^\nu$ out of $\mathcal J$. Define the countable set
\begin{align*}
\Theta
=& \Theta_{0}\cup \Theta_{1} \cup \Theta_{2}\cup\Theta_{3}
\\
=& \Big\{\text{jump points of the initial datum $\bar u$} \Big\}
 \bigcup \Big\{\text{atoms of $\mu^{IC}$} \Big\}
\\
&\qquad\bigcup\Big\{\text{intersection of any $\gamma ^{i}_{(\varepsilon_{0}^{k},\varepsilon_{1}^{k}),m} $ and $\gamma ^{j}_{(\varepsilon_{0}^{k},\varepsilon_{1}^{k}),m} $ with $i\neq j$} \Big\} 
 \bigcup \Big\{\text{endpoints of $\gamma ^{i}_{(\varepsilon^{k}_{0},\varepsilon^{k}_{1}),m}$}\Big\}
.
\end{align*}
Then\footnote{For obtaining the central term below we refer more precisely to formulas (10.78-79) in~\cite{Bressan} for the genuinely non-linear case, and to an adaptation of the proof after the absurd (10.83) in~\cite{Bressan} for the linearly-degenerate case.
These formula show also that the curves in~\eqref{E:shocksconv} may intersect only at endpoints for $k$ fixed, as we claimed before the theorem.
For the purpose of this paper only the genuinely non-linear statement will be relevant, thus we have not specified that the curves $\gamma^{\nu}$ are actually defined in a different way for linearly degenerate characteristics.},
at each point $(t,\gamma(t))=\lim_{\nu}(t_{\nu},\gamma^{\nu_{}}(t_{\nu}))$ of $\mathcal J\setminus\Theta$ one has a jump between the distinct values
\begin{align}
\label{E:limitsconv1}
&
u(t, \gamma(t)^{-})
=
\lim_{\nu} u^{\nu_{}}(t_{\nu}, \gamma^{\nu_{}}(t_{\nu})^{-}) 
=
\lim_{\substack{(s,y)\to(t, \gamma(t)) \\y< \gamma(s)}} u(s,y) 
,
\\
\label{E:limitsconv2}
&
u(t, \gamma(t)^{+})
=\lim_{\nu} u^{\nu_{}}(t_{\nu}, \gamma^{\nu_{}}(t_{\nu})^{+}) 
=
\lim_{\substack{(s,y)\to(t, \gamma(t)) \\y>\gamma(s)}} u(s,y) 
.
\end{align}
\end{theorem}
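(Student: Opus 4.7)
The plan is to exploit the uniform structural bounds on the shock fronts of $u^\nu$ to extract limit curves, and then to identify how these limits encode the jump structure of $u$. The segments composing any maximal $(\varepsilon_0^k,\varepsilon_1^k)$-shock front travel with uniformly bounded speeds, so the curves $\gamma^{\nu,i}_{(\varepsilon_0^k,\varepsilon_1^k),m}$ are uniformly Lipschitz in $m$ and $\nu$. Since each carries a segment of strength at least $\varepsilon_1^k$, the standard $\BV$ bound for front-tracking approximations yields $M^{i}_{(\varepsilon_0^k,\varepsilon_1^k)}\leq C\,\TV(\bar u)/\varepsilon_1^k$; up to a subsequence this count is independent of $\nu$, and a diagonal argument in $k$ combined with Arzel\`a--Ascoli then produces the uniform limits in~\eqref{E:shocksconv}. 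The remaining work splits into showing that these limits cover the jump set of $u$ while $u$ is continuous off them, and identifying one-sided traces along each limit curve.

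At a jump point $(\bar t,\bar x)$ of $u$ with $|u^+-u^-|=2\delta$, the Rankine--Hugoniot condition singles out a family $\bar\imath$; for $k$ with $\varepsilon_1^k<\delta$, the $L^1$-convergence $u^\nu\to u$ together with the piecewise-constant structure of $u^\nu$ forces an $\bar\imath$-shock of $u^\nu$ with strength at least $\varepsilon_1^k$ to pass at distance $o(1)$ from $(\bar t,\bar x)$, lest the $L^1$-deviation on shrinking rectangles around $(\bar t,\bar x)$ stay bounded away from $0$. Such a shock lies on some maximal $(\varepsilon_0^k,\varepsilon_1^k)$-front whose limit curve must pass through $(\bar t,\bar x)$. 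Conversely, at a point off every $\gamma^{i}_{(\varepsilon_0^k,\varepsilon_1^k),m}$, for $k$ and $\nu$ large one finds a rectangle centred there avoiding all curves $\gamma^{\nu,i}_{(\varepsilon_0^k,\varepsilon_1^k),m}$; the $u^\nu$-fronts inside have strength below $\varepsilon_0^k$ or are non-physical, with total strength bounded by $\mu^{IC}_\nu$ of that rectangle plus $\varepsilon_\nu$, whence $u$ is continuous there and $u^\nu\to u$ pointwise.

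For $(\bar t,\bar x)\in\mathcal J\setminus\Theta$ the point is neither an endpoint, nor an intersection with another family, nor an atom of $\mu^{IC}$, so for $k$ large it lies on a unique $\gamma:=\gamma^{i}_{(\varepsilon_0^k,\varepsilon_1^k),m}$ and sits at positive distance from every other limit curve. I would inscribe a Lipschitz wedge $W^-$ on the left of $\gamma$ with apex $(\bar t,\bar x)$ which, for $\nu$ large, contains no other maximal shock front of $u^\nu$; the residual waves inside have strength below $\varepsilon_0^k$ or are non-physical, with total strength bounded by $\mu^{IC}_\nu(W^-)+\varepsilon_\nu$, a quantity that tends to $0$ as $\nu\to\infty$ and then as $W^-$ shrinks. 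Hence $u^\nu(t_\nu,\gamma^\nu(t_\nu)^-)$ converges to $u(\bar t,\bar x^-)$ as in~\eqref{E:limitsconv1}, and the right trace~\eqref{E:limitsconv2} is entirely analogous.

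The main obstacle is precisely this last quantitative localisation in the wedges $W^\pm$: globally one controls sub-threshold and non-physical waves only through the finite but $\nu$-dependent measure $\mu^{IC}_\nu$. The conclusion relies on its $w^*$-limit having no atom at $(\bar t,\bar x)$---which is exactly why $\Theta_1$ is excluded---combined with the separation of distinct limit curves at $(\bar t,\bar x)$, ensured by removing $\Theta_2\cup\Theta_3$, and on the uniform bound $\mu^{IC}_\nu(\R^+\times\R)\lesssim\TV(\bar u)^{2}$ that allows to make the mass in shrinking wedges arbitrarily small uniformly along the extracted subsequence. Without this uniform smallness, the sub-threshold and non-physical fronts near $\gamma$ could prevent the one-sided traces from being well-defined; with it, the identification of $u^{\pm}$ with the $\nu$-limits of $u^\nu(t_\nu,\gamma^\nu(t_\nu)^{\pm})$ goes through.
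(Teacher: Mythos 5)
The paper does not prove this statement; it is quoted as Theorem~10.4 of~\cite{Bressan}, with a footnote directing to formulas (10.78--79) and the argument after (10.83) there, so the comparison must be with Bressan's original argument. Your outline follows the right architecture — uniform Lipschitz bounds and the counting bound $M^{i}_{(\varepsilon_0^k,\varepsilon_1^k)}\lesssim\TV(\bar u)/\varepsilon_1^k$ give the limit curves by Arzel\`a--Ascoli and a diagonal extraction, then one shows the curves cover the jump set, then one identifies the one-sided traces — but the quantitative estimate on which you lean twice is wrong, and it is exactly the step you yourself flag as the main obstacle.

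You claim, both for the complement of $\mathcal J$ and for the wedges $W^\pm$, that the total strength of the physical $u^\nu$-fronts of strength below $\varepsilon_0^k$ inside a small region is bounded by $\mu^{IC}_\nu$ of that region plus $\varepsilon_\nu$. This is false: $\mu^{IC}_\nu$, as recalled in~\eqref{E:intrInterCanc}, only records the quadratic interaction products and the cancellation at collision points; it says nothing about how much wave is \emph{present}. A centred rarefaction fan, or a smooth compression not yet focused, fills a rectangle with sub-threshold fronts whose total strength is the oscillation of $u^\nu$ there, while $\mu^{IC}_\nu$ of that rectangle is essentially zero. Consequently neither the pointwise convergence and continuity of $u$ off $\mathcal J$, nor the trace identities~\eqref{E:limitsconv1}--\eqref{E:limitsconv2}, follow from $\mu^{IC}_\nu$-smallness on shrinking sets. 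What Bressan actually uses is that the wedges in (10.78--79) are bounded by minimal/maximal $\bar\imath$-characteristics and truncated below: inside such a region $\bar\imath$-waves cannot enter through the lateral boundary, waves of the other families transit in a time of order the width over the spectral gap, and the oscillation is controlled by the incoming wave strength on the bottom slice together with the interaction--cancellation amount; the decay of positive waves (Theorem~10.3 in~\cite{Bressan}) then rules out an accumulation of sub-threshold fronts. A secondary gap: you assume at the outset that a jump point of $u$ lies in a single characteristic family (``the Rankine--Hugoniot condition singles out a family $\bar\imath$''), but for a generic $\BV$ function the jump could a priori be a full Riemann fan; excluding this for the semigroup solution is itself part of what the structural theorem establishes, and cannot be taken for granted.
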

\end{subequations}

We now stress some consequences of the above analysis.
On one hand, the fine approximation of $u$ yields in turn piecewise-constant approximations of any function depending on time-space through sufficiently smooth composition with $u$, and depending on $u^{\pm}$ at jump points.
The countably many points of $\Theta$ are not considered in these approximations statement, and not relevant.
In particular, Theorem~\ref{T:pwconvergence} yields the following corollary relative to the functions $\tilde l_i$, $\tilde r_i$, $\lambda_i$ introduced at Page~\pageref{En:rllambda}.

\begin{subequations}
\begin{notation}
When adding an apex $\nu$ we refer to the corresponding quantities relative to the $\nu$-front tracking approximation $u^{\nu}$: at each point $\lambda_{i}(t,x):=\lambda_{i}(u(t,x^{+}),u(t,x^{-}))$ and so on, while $\lambda_{i}^{\nu}(t,x):=\lambda_{i}(u^{\nu}(t,x^{+}),u^{\nu}(t,x^{-}))$ and similarly
\label{E:lambdalrnu}
\begin{align}
&
\tilde l_i^\nu (t,x) := \tilde l_i\big(u^{\nu}(t,x^{+}),u^{\nu}(t,x^{-})\big),
&&
\tilde r_i^\nu (t,x) := \tilde r_i\big(u^{\nu}(t,x^{+}),u^{\nu}(t,x^{-})\big).
\end{align}
\end{notation}
\begin{remark}
\label{R:barlambda}
Front-tracking approximations do not satisfy in general the precise Rankine-Hugoniot conditions, but they may have a small error.
Then we define the functions, for $i=1,\dots,\dmns$,
\begin{equation}
\bar\lambda_{i}^{\nu}(t,x)=
\begin{cases}
\lambda_{i}(u^{\nu}(t,x)) 
&\text{at points which are not on $i$-fronts of $u^{\nu}$} 
\\
\dot{\gamma ^\nu}(t) 
&\text{at discontinuity points of the $i$-th characteristic family}
\end{cases}
\end{equation}
which are the actual speed of propagation of the outgoing jump of $u^{\nu}$ if belonging to the corresponding characteristic family.
In general $\lambda_{i}^{\nu}(t, \gamma ^\nu(t)) \neq\dot{\gamma ^\nu}(t) $ and the difference vanishes as $\nu\to\infty$.
\end{remark}
\end{subequations}

\begin{corollary}
\label{C:corAprroximation}
At each point out of $\Theta\cup\mathcal J$ the functions $\tilde l_i$, $\tilde r_i$, $\lambda_i$ are the pointwise limits of their piecewise constant front-tracking approximations $\lambda_i^\nu$, $\tilde l_i^\nu$, $\tilde r_i^\nu$ introduced above.

Moreover, at each point $(t,\gamma(t))=\lim_{\nu}(t_{\nu},\gamma^{\nu_{}}(t_{\nu}))$ of $\mathcal J\setminus \Theta$ one has
\begin{gather*}
\tilde l_{i}(t, \gamma(t))=\lim_{{\nu}} \tilde l_i^{ \nu_{}} (t_{\nu},\gamma^{\nu_{}}(t_{\nu})), 
\qquad
\tilde r_i(t, \gamma(t))= \lim_{{\nu}} \tilde r_i^{ \nu_{}}(t_{\nu},\gamma^{\nu_{}}(t_{\nu})),
\\
\lambda_i(t, \gamma(t)) 
=\lim_{\nu_{}}\lambda_i^ {\nu}(t_{\nu},\gamma^{\nu_{}}(t_{\nu}))
=\lim_{\nu_{}} \dot{ \gamma ^{\nu}}(t).
\end{gather*}
\end{corollary}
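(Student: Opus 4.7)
The plan is to reduce both claims to the pointwise convergences granted by Theorem~\ref{T:pwconvergence}, combined with continuity of the maps
\[
(u^+,u^-)\ \mapsto\ \tilde l_i(u^+,u^-),\ \tilde r_i(u^+,u^-),\ \lambda_i(u^+,u^-).
\]
First I would record that these three maps are continuous on $\Omega\times\Omega$: they are built from the averaged matrix $A(u^+,u^-)=\int_0^1 A(\theta\omega_i+(1-\theta)\omega_{i-1})\,d\theta$, whose $i$-th eigenvalue is simple and isolated from the others by strict hyperbolicity, and from the intermediate states $\omega_0,\dots,\omega_{\dmns}$, which depend continuously on $(u^+,u^-)$ via the inverse of the $C^1$-diffeomorphism $\Lambda$ appearing in the proof of Theorem~\ref{T:sigmaLambda}. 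With this continuity at hand, it suffices to establish the appropriate convergences of the arguments $u^\nu(t_\nu,x^\pm)$.

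For a point $(t,x)\notin\Theta\cup\mathcal J$, Theorem~\ref{T:pwconvergence} gives both $u^\nu(t,x)\to u(t,x)$ and the continuity of $u$ at $(t,x)$, so that $u(t,x^+)=u(t,x^-)=u(t,x)$. I then need to upgrade this to convergence of the one-sided traces $u^\nu(t,x^\pm)\to u(t,x)$: for a given $\nu$ the vertical slice through $(t,x)$ might still lie on a front of $u^\nu$. The argument would be that if the strengths of such fronts did not vanish with $\nu$, then, up to a subsequence and for some small enough $k$, they would be captured by a maximal $(\varepsilon_0^k,\varepsilon_1^k)$-shock curve through $(t,x)$, forcing $(t,x)\in\mathcal J$ and contradicting the hypothesis. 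Combining this convergence with the continuity of $\tilde l_i,\tilde r_i,\lambda_i$ yields the first assertion.

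For a point $(t,\gamma(t))=\lim_\nu(t_\nu,\gamma^\nu(t_\nu))$ in $\mathcal J\setminus\Theta$, the convergences~\eqref{E:limitsconv1}--\eqref{E:limitsconv2} read precisely as $u^\nu(t_\nu,\gamma^\nu(t_\nu)^\pm)\to u(t,\gamma(t)^\pm)$; composing with the continuity of the three maps yields the claimed limits of $\tilde l_i^\nu,\tilde r_i^\nu,\lambda_i^\nu$ along the approximating curves. The final identity $\lim_\nu \lambda_i^\nu(t_\nu,\gamma^\nu(t_\nu))=\lim_\nu \dot{\gamma^\nu}(t)$ then follows directly from Remark~\ref{R:barlambda}, which records that along an $i$-front of $u^\nu$ the actual propagation speed $\dot{\gamma^\nu}(t)$ differs from the Rankine--Hugoniot value $\lambda_i^\nu(t,\gamma^\nu(t))$ by an error that is infinitesimal in $\nu$, coming from the controlled perturbations allowed in the wave-front tracking construction and from the artificial decomposition of rarefactions.

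The step I expect to be the main obstacle is the handling of ``accidental'' fronts of $u^\nu$ at a point $(t,x)\notin\mathcal J$: one must exclude that such fronts carry a non-infinitesimal strength, which requires invoking the threshold labelling of Definition~\ref{D:max_epsilon_shock} and the diagonal extraction that produced the limit curves. Everything else is a direct application of continuity and of the pointwise statements in Theorem~\ref{T:pwconvergence} and Remark~\ref{R:barlambda}.
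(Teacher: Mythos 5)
Your proof follows essentially the same route as the paper: reduce to pointwise convergence of $u^\nu$ (and of its one-sided traces along shock curves) from Theorem~\ref{T:pwconvergence}, then compose with the continuity of $(u^+,u^-)\mapsto\tilde l_i,\tilde r_i,\lambda_i$, with Remark~\ref{R:barlambda} supplying the final identity for $\dot{\gamma^\nu}$. The one place where you add content beyond the paper's one-line argument is the worry about ``accidental'' fronts of $u^\nu$ sitting exactly on a point $(t,x)\notin\mathcal J$; the paper takes the pointwise statement of Theorem~\ref{T:pwconvergence} (Bressan, Th.~10.4) to already guarantee this, while you re-derive it via the threshold labelling — a fine, if slightly redundant, precaution.
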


\begin{proof}
Out of $\Theta\cup\mathcal J$ the statement follows just by the pointwise convergence of $u^{\nu}$ to $u$ provided by Theorem~\ref{T:pwconvergence}, because being $f\in C^{1}(\Omega;\R^{\dmns})$ the eigenvalues$\backslash$eigenvectors of $A(u)$ depend continuously on the variable $u$.
Also $\tilde l_i$, $\tilde r_i$, $\lambda_i$ depend continuously on the values $u^{\pm}$, being suitable means of continuous functions on the segment $[u^{-},u^{+}]$, and they depend on the time-space variables through the composition with the left and right limits $u$$\backslash$$u^{\nu}$ at the jump: the claim at a jump point $(t,\gamma(t))=\lim_{\nu}(t_{\nu},\gamma^{\nu_{}}(t_{\nu}))$ out of $\Theta$ follows then by the convergence of $u^{\nu_{}}(t_{\nu},\gamma^{\nu_{}}(t_{\nu})^{\pm})$ to $u^{\pm}(t,\gamma(t))$.
\end{proof}

\begin{corollary}
\label{C:hausdorfpendenze}
Each $i$-maximal $(\varepsilon^{k}_{0},\varepsilon^{k}_{1})$-shock front in $u_{\nu}$ converges (locally) as $\nu\to\infty$ to the corresponding graph of $\gamma ^{i}_{(\varepsilon^{k}_{0},\varepsilon^{k}_{1}),m} $ in the Hausdorff distance: $d_{H}\big(\Graph\gamma ^{\nu,i}_{(\varepsilon^{k}_{0},\varepsilon^{k}_{1}),m},\Graph \gamma ^{i}_{(\varepsilon^{k}_{0},\varepsilon^{k}_{1}),m}\big)\to 0$.
Moreover, there is a pointwise convergence also for the slopes $\dot{\gamma }^{\nu,i}_{(\varepsilon^{k}_{0},\varepsilon^{k}_{1}),m} (t)$ out of a countable set of $t$.
\end{corollary}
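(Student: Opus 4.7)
My plan is to deduce the Hausdorff convergence of the graphs directly from the uniform convergence~\eqref{E:shocksconv} already established in Theorem~\ref{T:pwconvergence}, and to handle the slopes by combining the piecewise-linear structure of $\gamma^{\nu,i}_{(\varepsilon_0^k,\varepsilon_1^k),m}$ with the pointwise convergence of the Rankine--Hugoniot speeds provided by Corollary~\ref{C:corAprroximation}.

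For the Hausdorff convergence, I abbreviate $\gamma^\nu=\gamma^{\nu,i}_{(\varepsilon_0^k,\varepsilon_1^k),m}$, $\gamma=\gamma^{i}_{(\varepsilon_0^k,\varepsilon_1^k),m}$, $[t_\nu^-,t_\nu^+]=\Dom(\gamma^\nu)$ and $[t^-,t^+]=\Dom(\gamma)$. By~\eqref{E:shocksconv} we have $t_\nu^\pm\to t^\pm$ and $\gamma^\nu\to\gamma$ uniformly (after extending each $\gamma^\nu$ to the interval $[t^-\wedge t_\nu^-,t^+\vee t_\nu^+]$ by its values at the endpoints, which adds a contribution to both distances that is $\OO(|t_\nu^\pm-t^\pm|)$, uniformly bounded by the Lipschitz constant $\max_j \sup|\lambda_j|$). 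Since $\gamma$ is continuous, the uniform convergence on a common interval together with the endpoint convergence gives $d_H(\Graph\gamma^\nu,\Graph\gamma)\to 0$ by the standard fact that uniform convergence of continuous functions is equivalent to Hausdorff convergence of their graphs on compact domains.

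For the convergence of slopes, I first note that $\gamma^\nu$ is piecewise affine: its slope is constant on each interval $(t',t'')$ between two consecutive interaction times of $u^\nu$ along the polygonal line, and on such an interval equals the Rankine--Hugoniot speed $\bar\lambda_i^\nu(t,\gamma^\nu(t))$ of the $i$-shock segment (see Remark~\ref{R:barlambda}). On the other hand, the limit curve $\gamma$ is Lipschitz and classically satisfies $\dot\gamma(t)=\lambda_i(t,\gamma(t))$ at every $t$ where $\gamma$ is differentiable and $(t,\gamma(t))\notin\Theta$: this is the Rankine--Hugoniot identity on the shock curve of the semigroup solution, and it fails at most at a countable set $E$ consisting of the non-differentiability points of $\gamma$ together with the countably many abscissas of the points of $\Theta$ lying on $\Graph\gamma$. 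Fix $t\notin E$. By Corollary~\ref{C:corAprroximation}, taking any sequence $t_\nu\to t$ with $(t_\nu,\gamma^\nu(t_\nu))\to(t,\gamma(t))\in\mathcal J\setminus\Theta$, we have
\[
\bar\lambda_i^\nu(t_\nu,\gamma^\nu(t_\nu))=\dot{\gamma^\nu}(t_\nu)\;\longrightarrow\;\lambda_i(t,\gamma(t))=\dot\gamma(t),
\]
so choosing $t_\nu=t$ (and observing that for $\nu$ large enough $t$ is not a vertex of the polygonal $\gamma^\nu$, by the uniform convergence and the fact that vertex times of $\gamma^\nu$ accumulate only at interaction times of the limit, which are countable) yields $\dot\gamma^\nu(t)\to\dot\gamma(t)$.

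The main obstacle I anticipate is precisely the last parenthetical point: one must exclude, for each fixed $t\notin E$, that infinitely many approximations have a vertex exactly at $t$. This is handled by enlarging the exceptional countable set to include the accumulation points of vertices of $\{\gamma^\nu\}$, which, thanks to the $w^*$-compactness of the interaction measures $\mu_\nu^{IC}$ and their uniform finite total mass, form a set whose projection on the $t$-axis is at most countable; outside this enlarged countable set, the argument above delivers the pointwise convergence of slopes.
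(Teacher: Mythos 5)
Your proof follows the same route as the paper: Hausdorff convergence from the uniform convergence~\eqref{E:shocksconv}, and slope convergence from Corollary~\ref{C:corAprroximation} together with the (approximate) Rankine--Hugoniot relation on $i$-shocks of $u^\nu$ (Remark~\ref{R:barlambda}). The paper dispatches this in two lines precisely because it relies on those two facts; you have filled in the details correctly for the Hausdorff part.

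For the slopes, however, your treatment of the vertex problem introduces an unnecessary and unjustified step. You invoke ``$w^*$-compactness of the interaction measures $\mu^{IC}_\nu$ and their uniform finite total mass'' to conclude that the set of accumulation points of vertices of $\{\gamma^\nu\}$ has at most countable $t$-projection. This does not follow: a uniformly bounded sequence of atomic measures can have atoms spreading over an uncountable set in the limit, and nothing in the $w^*$-convergence of $\mu^{IC}_\nu$ prevents vertices from accumulating on an uncountable set of times while their total interaction mass remains small. The argument as stated is a non-sequitur. Fortunately the obstacle is illusory: since $\nu$ ranges over a \emph{sequence} and each $\gamma^\nu$ is a polygonal line with at most countably many (locally finitely many) vertices, the union over all $\nu$ of vertex times is already a countable set. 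Outside that countable set and outside the countably many $t$ with $(t,\gamma(t))\in\Theta$, the slope $\dot\gamma^\nu(t)$ is defined for every $\nu$, equals $\bar\lambda^\nu_i(t,\gamma^\nu(t))$, and Corollary~\ref{C:corAprroximation} (applied with $t_\nu=t$, using uniform convergence~\eqref{E:shocksconv} to get $(t,\gamma^\nu(t))\to(t,\gamma(t))$) together with Remark~\ref{R:barlambda} gives $\dot\gamma^\nu(t)\to\lambda_i(t,\gamma(t))$. That is all the corollary asserts; note that it does not actually claim the limit equals $\dot\gamma(t)$, so your auxiliary claim that $\gamma$ satisfies $\dot\gamma=\lambda_i(\cdot,\gamma(\cdot))$ off a countable set, while plausible, is not needed and not proved.
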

\begin{proof}
This follows directly by~\eqref{EG:jumpsconv} and Corollary~\ref{C:corAprroximation} taking into account that the $\nu$-front tracking approximate solutions (approximatively) satisfy on $i$-shocks the Rankine-Hugoniot conditions
\[
f(u^{+})-f(u^{-})=\lambda_{i}(u^{+},u^{-})(u^{+}-u^{-}).
\qedhere
\]
\end{proof}
On the other hand, Theorem~\ref{T:pwconvergence} allows handy approximations of the jump part of $u_{x}$, and therefore of the jump part of the wave measures $v_{i}$.
More precisely, consider for fixed $(\varepsilon_{0},\varepsilon_{1})$ the measures
\[
\label{E:sepjump}
w^{\nu,i,(\varepsilon_{0},\varepsilon_{1})}_{\jump} = u_{x}^{\nu}\llcorner_{\mathcal J^{\nu,i}_{(\varepsilon_{0},\varepsilon_{1})}},
\qquad
v^{\nu,i,(\varepsilon_{0},\varepsilon_{1})}_{\jump} = \tilde l_{i}\cdot w^{\nu,i,(\varepsilon_{0},\varepsilon_{1})}_{\jump}, 
\]
which are concentrated on finitely many segments, up to any finite time.

\begin{corollary}
\label{C:weakConvergences}
There exists a sequence $\nu_{k}$ such that
\[
\Difxj u
= \wlim_{k} \sum_{i=1}^{\dmns} w^{\nu_{k},i,(\varepsilon^{k}_{0},\varepsilon^{k}_{1})}_{\jump} ,
\qquad
(v_{i})_{\jump}
= \wlim_{k} v^{\nu_{k},i,(\varepsilon_{0}^{k},\varepsilon_{1}^{k})}_{\jump}  .
\]
Denoting by $\Difxa u_{x}=\nabla u\mathcal{L}^{2}_{\llcorner \R^{+}\times \R}$ the absolutely continuous part, $\Difxc u$ the Cantor part of $u_{x}$, one has
\[
\Difxa u+ \Difxc u= \wlim_{k} {w_{\cont}^{\nu_{k},(\varepsilon^{k}_{0},\varepsilon^{k}_{1})}},
\qquad
w_{\cont}^{\nu_{k},(\varepsilon^{k}_{0},\varepsilon^{k}_{1})}
:=
\big(u^{\nu_{k}} \big)_{x}- \sum_{i=1}^{\dmns} w^{\nu_{k},i,(\varepsilon^{k}_{0},\varepsilon^{k}_{1})}_{\jump}
.
\]
Similarly each $\lambda_{i} (v_{i})_{\jump}$ is the limit of $\bar\lambda_{i}^{\nu_{k}} v^{\nu_{k},i,(\varepsilon_{0}^{k},\varepsilon_{1}^{k})}_{\jump}$, while $\lambda_{i} (v_{i})_{\cont}$ the one of $\bar\lambda_{i}^{\nu_{k}}\big[v_{i}^{\nu_{k}}-v^{\nu_{k},i,(\varepsilon_{0}^{k},\varepsilon_{1}^{k})}_{\jump}]$.
\end{corollary}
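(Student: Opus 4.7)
The plan is to build the subsequence $\nu_k$ by a diagonal extraction across the thresholds $k$, the families $i=1,\dots,\dmns$, and the indices $m\in\{1,\dots,M^i_{(\varepsilon_0^k,\varepsilon_1^k)}\}$ of maximal shock fronts. Along any front-tracking sequence the $\BV$ uniform bounds provide weak* compactness, so up to subsequences $u^{\nu_k}_x \to u_x$ weakly* as vector-valued Radon measures on $\R^+\times\R$. Simultaneously, by a diagonal choice the hypotheses of Theorem~\ref{T:pwconvergence} are arranged, so that for each triple $(i,k,m)$ the curves $\gamma^{\nu_k,i}_{(\varepsilon_0^k,\varepsilon_1^k),m}$ converge uniformly to $\gamma^{i}_{(\varepsilon_0^k,\varepsilon_1^k),m}$ together with their left/right traces, and Corollaries~\ref{C:corAprroximation} and~\ref{C:hausdorfpendenze} apply along $\nu_k$.

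Next I would compute $\wlim_k w^{\nu_k,i,(\varepsilon_0^k,\varepsilon_1^k)}_{\jump}$ slice by slice. For fixed $(i,k,m)$, on the graph of $\gamma^{\nu_k,i}_{(\varepsilon_0^k,\varepsilon_1^k),m}$ the measure $u^{\nu_k}_x$ acts on $\varphi\in C_c(\R^+\times\R)$ as
\[
\int \varphi\bigl(t,\gamma^{\nu_k}(t)\bigr)\,\bigl[u^{\nu_k}(t,\gamma^{\nu_k}(t)^{+})-u^{\nu_k}(t,\gamma^{\nu_k}(t)^{-})\bigr]\,dt.
\]
Uniform convergence of the curve and pointwise convergence of the traces (Theorem~\ref{T:pwconvergence}), together with uniform boundedness of $u^{\nu_k}$, allow dominated convergence to push this to the same integral with $\gamma^{i}_{(\varepsilon_0^k,\varepsilon_1^k),m}$ and the traces $u(t,\gamma(t)^{\pm})$ in place. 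Summing over the finitely many $(i,m)$ at fixed $k$ and letting $k\to\infty$ along the diagonal, Theorem~\ref{T:pwconvergence} guarantees that $\bigcup_{i,k,m}\Graph(\gamma^{i}_{(\varepsilon_0^k,\varepsilon_1^k),m})$ exhausts $\mathcal J$ modulo the countable exceptional set $\Theta$, which carries no $\Difxj u$-mass; so the cumulative weak* limit equals $\Difxj u$. The companion $w^{\nu_k,(\varepsilon_0^k,\varepsilon_1^k)}_{\cont}=u^{\nu_k}_x-\sum_i w^{\nu_k,i,(\varepsilon_0^k,\varepsilon_1^k)}_{\jump}$ then converges to $u_x-\Difxj u=\Difxa u+\Difxc u$ by subtraction.

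For the scalar wave measures one projects along the left eigenvectors: multiplying the above integrand by $\tilde l_i^{\nu_k}(t,\gamma^{\nu_k}(t))$, which is uniformly bounded and converges pointwise to $\tilde l_i(t,\gamma(t))$ by Corollary~\ref{C:corAprroximation}, dominated convergence yields $(v_i)_{\jump}=\wlim_k v^{\nu_k,i,(\varepsilon_0^k,\varepsilon_1^k)}_{\jump}$. The flux versions run analogously: on identified shock fronts $\bar\lambda_i^{\nu_k}(t,\gamma^{\nu_k}(t))=\dot{\gamma}^{\nu_k}(t)\to\lambda_i(t,\gamma(t))$ by Corollary~\ref{C:hausdorfpendenze}, and off the jump set $\bar\lambda_i^{\nu_k}=\lambda_i(u^{\nu_k})\to\lambda_i(u)$ by continuity of the eigenvalues; in both regions the weak* limit passes through the multiplication by this uniformly bounded, pointwise convergent factor.

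The main obstacle I anticipate is controlling the ``unidentified'' part of $u^{\nu_k}_x$---small shocks, rarefaction fronts, non-physical waves, and shocks on polygonal lines never reaching strength $\varepsilon_1^k$---to ensure they contribute only to the continuous limit and not to $\Difxj u$. This is handled by Theorem~\ref{T:pwconvergence}: any jump of $u$ at $(t,\gamma(t))\notin\Theta$ has a fixed positive size, hence for $k$ large the threshold $\varepsilon_1^k$ drops below it and the jump is captured by some maximal $(\varepsilon_0^k,\varepsilon_1^k)$-shock front. Conversely the leftover mass, being the difference of two weak* limits that already agree on every jump, is automatically concentrated off $\mathcal J$ and therefore lands in $\Difxa u+\Difxc u$, confirming the stated decomposition.
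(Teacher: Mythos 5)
Your proposal is correct and follows essentially the same route as the paper's own proof: expressing $\Difxj u$ via the $\BV$ structure theorem as a $k$-limit of finite sums over $(i,m)$, showing each summand is a weak$^*$ (indeed narrow) limit in $\nu$ by combining Theorem~\ref{T:pwconvergence}, Corollaries~\ref{C:corAprroximation}--\ref{C:hausdorfpendenze}, and dominated convergence, extracting $\nu_k$ diagonally, and recovering $w_{\cont}$ by subtraction from $(u^{\nu_k})_x$ using $L^1_{\loc}$ convergence of $u^{\nu_k}$. The only cosmetic difference is that the paper phrases the two limits explicitly in nested form $\lim_k\bigl(\wlim_\nu\cdot\bigr)$ before diagonalizing, whereas you diagonalize up front; both amount to the same thing.
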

\begin{remark}
\label{R:convrestr}
%Since the semigroup solution $u$ is a Lipschitz semigroup $t\mapsto u(t)\in L^{1}(dx)$, then not only $u^{\nu}\to u$ in $ L^{1}_{\loc}(\R^{+}\times\R)$, but also $u^{\nu}(t)\to u$ in $L^{1}(\R)$ for all $t\geq 0$.
By inspection of the proof below, one can see that the convergence in the statement of Corollary~\ref{C:weakConvergences} holds in the same way for the jump and continuous part of $\big(u(t)\big)_{x}$, for each time $t$ except at most the ones corresponding to the points in $\Theta$ of Theorem~\ref{T:pwconvergence}.
\end{remark}
\begin{proof}
The $\BV$ structure of the semigroup solution $u$ (Sect.~3.7 of~\cite{AFP}) gives the formula for the jump part of $ \Dif u$: for $M^{i}_{k}=M^{i}_{(\varepsilon^{k}_{0},\varepsilon^{k}_{1})}$, it is the matrix-valued measure
\begin{align*}
\Difj u
&= \left(u^{+}-u^{-}\right) \otimes n\, \mathcal{H}^{1}_{\llcorner \mathcal J} 
=
\lim_{k\to\infty} 
\sum_{i=1}^{\dmns} 
\sum_{m=1}^{M^{i}_{k}} 
\left(u^{+}-u^{-}\right) \otimes n\, \mathcal{H}^{1}_{\llcorner \Graph\big( \gamma^{i}_{(\varepsilon^{k}_{0},\varepsilon^{k}_{1}),m}\big)}
.
\end{align*}
Moreover, each addend of the sum is the narrow limit of the analogous one relative to $u^{\nu}$ on the corresponding $(\varepsilon_{0}^{k}, \varepsilon_{1}^{k})$-shock front: by~\eqref{E:limitsconv1},~\eqref{E:limitsconv2}, Corollary~\ref{C:hausdorfpendenze} and the dominated convergence theorem
\begin{align*}
\sum_{m=1}^{M^{i}_{k}} 
\left(u^{+}-u^{-}\right) \otimes n\, \mathcal{H}^{1}_{\llcorner \Graph\big( \gamma^{i}_{(\varepsilon_{0}^{k},\varepsilon_{1}^{k}),m}\big)}
&=
  \wlim_{\nu} 
 \sum_{m=1}^{M_{k}^{i}} 
 \left(u^{\nu+}-u^{\nu-}\right) \otimes n^{\nu}\, 
 \mathcal{H}^{1}_{\llcorner \Graph\big( \gamma^{\nu,i}_{(\varepsilon_{0}^{k},\varepsilon_{1}^{k}),m} \big)}
 \\
& =
%\bigg(
\wlim_{\nu} \left(u^{\nu+}-u^{\nu-}\right) \otimes n^{\nu}\, \mathcal{H}^{1}_{\llcorner {\mathcal J^{\nu,i}_{(\varepsilon_{0}^{k},\varepsilon_{1}^{k})}}} .
%\bigg).
\end{align*}
Collecting the two limits, one has the expression of the $x$-component
\[
\Difxj u
=
\lim_{k}
\bigg(
\wlim_{\nu} \sum_{i=1}^{\dmns}w^{\nu,i,(\varepsilon_{0}^{k},\varepsilon_{1}^{k})}_{\jump}
\bigg)
.
\]
In particular, one can find a sequence $\nu_{k}$ such that
\[
\Difxj u
= \wlim_{k} 
\sum_{i=1}^{\dmns}w^{\nu_{k},i,(\varepsilon_{0}^{k},\varepsilon_{1}^{k})}_{\jump}
.
\]

Since $u^{\nu_{k}}$ converges to $u$ in $L^{1}_{\loc}(\R^{+}\times\R)$, then the $x$-derivative converges weakly$^{*}$.
One then takes the difference between the two terms $(u^{\nu_{k}})_{x}$ and $w^{\nu_{k},i,(\varepsilon^{k}_{0},\varepsilon^{k}_{1})}_{\jump}$, and takes their weak$^{*}$-limit, in order to see that each $w_{\cont}^{\nu_{k},(\varepsilon_{0}^{k},\varepsilon_{1}^{k})}$ weakly$^{*}$-converge to the continuous part of $u_{x}$, which is the difference between $u_{x}$ itself and its jump part.

The convergence of $v^{\nu,i,(\varepsilon_{0},\varepsilon_{1})}_{\jump}=\tilde l_{i}^{\nu_{k}} \cdot w^{\nu_{k},i,(\varepsilon^{k}_{0},\varepsilon^{k}_{1})}_{\jump}$ to $(v_{i})_{\jump}$ and of $\bar\lambda_{i}^{\nu} v^{\nu,i,(\varepsilon_{0},\varepsilon_{1})}_{\jump} $ to $\lambda_{i}^{}(v_{i})_{\jump} $ follow as above, taking into account also Corollary~\ref{C:corAprroximation}.
\end{proof}

%:
%: Stima principale
%:
\section{Main estimate}
\label{S:mainEstimate}

As explained in Section~\ref{S:mainArg}, the $\SBV$ regularity of $u$ amounts to the estimate
\begin{align*}
\label{E:decayEst2}
\exists C>0
\qquad
C\Big\{\Ll^{1}(B)/\tau+\mu_{i}^{ICJ}([s-\tau,s+\tau]\times\R) \Big\}
&\geq 
|(v_{i}(s))_{\cont}|\big(B\big)
\qquad
\forall \ s>\tau>0
,
\end{align*}
which shows that in the case of genuine non-linearity the non-atomic part $(v_{i}(s))_{\cont}$ of $v_{i}(s)$ is controlled by the Lebesgue $1$-dimensional measure and the interaction-cancellation-jump wave balance measure of a strip around the time $s$.

In proving it we consider the \emph{wave balance} and \emph{jump wave balance} measures
\begin{equation}
\label{E:balSources}
\mu_{i}^{}=\partial _{t} v_{i} +\partial_{x} (\lambda_{i} v_{i})
\qquad
\mu_{i,\jump}=\partial _{t} (v_{i})_{\jump} +\partial_{x} (\lambda_{i} v_{i})_{\jump},
\qquad i=1,\dots,N .
\end{equation}
In Lemmas~\ref{L:waveBalRadon},~\ref{L:jumpWaveBalRadon} below we prove that they are indeed Radon measures: they are controlled by using the interaction and interaction-cancellation measures $\mu^{I}$, $\mu^{IC}$, in the $\nu$-front-tracking approximation together with other terms vanishing in the limit.
The negative part of $\mu_{i,\jump}$ however may not be absolutely continuous w.r.t.~$\mu^{IC}$: this is why the statement holds with the measures $\mu^{ICJ}_{i}$, which dominates $\mu^{IC}+|\mu_{i,\jump}|$ for $i=1,\dots ,\dmns$.

We will first prove estimates of the wave measures on piecewise constant $\nu$-approximate front-tracking solutions $\{u^{\nu}\}$; we then obtain the claims by passing to the limit on a suitable subsequence.

\subsection{Wave and jump wave balance measures}
\label{Ss:wavebalmeas}
We consider the distributions
\begin{align*}
\partial _{t} v_{i} +\partial_{x} (\lambda_{i} v_{i}) &=:\mu_{i}^{}
&
\mu^{\nu}_{i}&:=\partial _{t} v_{i}^{\nu} +\partial_{x} (\bar\lambda_{i}^{\nu} v_{i}^{\nu}) ,
\\
\partial _{t} (v_{i})_{\jump} +\partial_{x} (\lambda_{i} v_{i})_{\jump}&=:\mu_{i,\jump}
&
\mu_{i, \jump}^{\nu,(\varepsilon_{0},\varepsilon_{1})}&:=\partial _{t} \Big(v_{i}^{\nu}\llcorner_{\mathcal J^{\nu,i}_{(\varepsilon_{0},\varepsilon_{1})}} \Big)+\partial_{x} \Big(\bar\lambda_{i} v^{\nu}_{i}\llcorner_{\mathcal J^{\nu,i}_{(\varepsilon_{0},\varepsilon_{1})}} \Big)
.
\end{align*}
Notice first that by Corollary~\ref{C:weakConvergences} the distributions in the left column are limits of the ones in the right column along sequences $\nu_{k}$, $(\varepsilon_{0}^{k},\varepsilon_{1}^{k})$, as $k\to\infty$.
This is why it seems natural to us to define them, even though we will apply in the present paper only the `discrete' ones and we are not going to take advantage of the ones on the l.h.s..

For the rest of the paper we will often omit the index $k$: limits will be tacitly taken on subsequences of the one of Corollary~\ref{C:weakConvergences}.
For every $\nu,(\varepsilon_{0},\varepsilon_{1}),i$ the following holds.

\begin{proposition}
\label{P:mu_estimate}
The distributions $\mu^{}_{i}$, $\mu_{i,\jump}$ and $\mu^{\nu}_{i}$, $\mu_{i,\jump}^{ \nu,(\varepsilon_{0},\varepsilon_{1})}$ are finite Radon measures.
\end{proposition}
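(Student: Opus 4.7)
The plan is first to establish that the discrete distributions $\mu_i^\nu$ and $\mu_{i,\jump}^{\nu,(\varepsilon_0,\varepsilon_1)}$ are Radon measures with total variation bounded uniformly in $\nu$ and along the admissible sequence $2^k\varepsilon_0^k \leq \varepsilon_1^k$ of Corollary~\ref{C:weakConvergences}, and then to pass to the limit via that corollary to obtain $\mu_i$ and $\mu_{i,\jump}$ as finite Radon measures.

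For $\mu_i^\nu$: since $u^\nu$ is piecewise constant, $v_i^\nu$ is supported on the $i$-fronts, each travelling with speed $\bar\lambda_i^\nu$ by Remark~\ref{R:barlambda}. Testing against $\phi \in C_c^1(\R^{+}\times\R)$, on every front segment the integrand $\sigma_\alpha(\phi_t + \bar\lambda_i^\nu \phi_x)$ equals $\sigma_\alpha \frac{d}{dt}\phi(t,x_\alpha(t))$, so telescoping leaves contributions only at interaction points of $u^\nu$ and at $t=0$. At each interaction $P$ the net contribution is $\phi(P)(W_\out^i - W_\inn^i)(P)$; the standard Glimm interaction estimate gives $|W_\out^i - W_\inn^i|(P) \leq C|\sigma'\sigma''| = C\mu_\nu^I(\{P\})$, and summing yields
\[
|\mu_i^\nu|(\R^{+}\times\R) \leq C\mu_\nu^I(\R^{+}\times\R) + |\bar u_x|(\R),
\]
uniformly in $\nu$.

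For $\mu_{i,\jump}^{\nu,(\varepsilon_0,\varepsilon_1)}$ the same scheme applies but restricted to segments of maximal $(\varepsilon_0,\varepsilon_1)$-shock fronts. At interior nodes of a shock front (and at same-family mergers of two shock fronts, where one continues and the other ends), the relevant change $|W_{\out,\jump}^i - W_{\inn,\jump}^i|(P)$ is again bounded by the Glimm estimate by $C\mu_\nu^I(\{P\})$. At a start or end of a shock front the on/off nature of the restriction contributes an extra term: one side carries an $i$-wave of strength $\geq\varepsilon_0$ that belongs to the restriction, while the opposite side carries an $i$-wave of strength $<\varepsilon_0$ that does not. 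The idea is to decompose this boundary contribution into a part bounded by $\mu_\nu^{IC}(\{P\})$ (from genuine cancellation/interaction) and a residual \emph{threshold cost} of order $\varepsilon_0$ per event. Using that every maximal shock front must reach peak strength $\geq\varepsilon_1$, one bounds the total count of such threshold events by a quantity controlled by $\mu_\nu^{I}$, $\mu_\nu^{IC}$ and $\varepsilon_1^{-1}$; along the sequence $\varepsilon_0^k/\varepsilon_1^k\leq 2^{-k}$, the total threshold cost is thus $O(2^{-k})$, uniformly small. One obtains a bound of the form
\[
|\mu_{i,\jump}^{\nu_k,(\varepsilon_0^k,\varepsilon_1^k)}|(\R^{+}\times\R) \leq C\bigl(\mu_{\nu_k}^I + \mu_{\nu_k}^{IC}\bigr)(\R^{+}\times\R) + |\bar u_x|(\R) + o(1),
\]
uniformly in $k$.

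Finally, Corollary~\ref{C:weakConvergences} gives weak$^*$ convergences $v_i^{\nu_k}\to v_i$, $\bar\lambda_i^{\nu_k}v_i^{\nu_k}\to\lambda_i v_i$ and their jump analogues; passing these under $\partial_t$ and $\partial_x$ yields distributional convergence $\mu_i^{\nu_k}\to\mu_i$ and $\mu_{i,\jump}^{\nu_k,(\varepsilon_0^k,\varepsilon_1^k)}\to\mu_{i,\jump}$. Combined with the uniform total variation bounds and lower semicontinuity of the total variation under weak$^*$ convergence, the two limits are finite Radon measures. The main obstacle is the start/end estimate for shock fronts: since $M_{(\varepsilon_0,\varepsilon_1)}^{i}$ can grow unboundedly as $(\varepsilon_0,\varepsilon_1)\to 0$, one must exploit the ratio $\varepsilon_0/\varepsilon_1 \to 0$ together with a peak-strength lower bound of order $\varepsilon_1$ on the interaction-cancellation cost per shock front, in order to absorb the naive boundary contribution $\varepsilon_0\cdot M_{(\varepsilon_0,\varepsilon_1)}^{i}$ into a uniformly bounded quantity.
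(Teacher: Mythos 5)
Your treatment of $\mu_i^\nu$ matches the paper (Lemma~\ref{L:waveBalRadon}): telescope along each front, leave Dirac contributions at interaction nodes, and bound $|\sigma_i-\sigma_i'-\sigma_i''|\leq C|\sigma'\sigma''|$ by the Glimm interaction measure, with a vanishing non-physical remainder. The gap is in the jump measure $\mu_{i,\jump}^{\nu,(\varepsilon_0,\varepsilon_1)}$, and specifically in your claim that ``at interior nodes of a shock front \dots the relevant change $|W^i_{\out,\jump}-W^i_{\inn,\jump}|(P)$ is again bounded by $C\mu_\nu^I(\{P\})$.'' This is false at the most common kind of interior node, where a same-family $i$-shock $\sigma''$ of strength $<\varepsilon_0$ (hence \emph{not} in $\mathcal J^{\nu,i}_{(\varepsilon_0,\varepsilon_1)}$) merges into a front shock $\sigma'$ that \emph{is}. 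There $W^i_{\out,\jump}-W^i_{\inn,\jump}=\sigma-\sigma'\approx\sigma''$, which is of order $|\sigma''|$, whereas $\mu_\nu^I(\{P\})=|\sigma'\sigma''|$; the ratio is $1/|\sigma'|$, which can be as large as $1/\varepsilon_0$. Summed over all such absorption events the contribution to the negative part of $\mu_{i,\jump}^{\nu,(\varepsilon_0,\varepsilon_1)}$ is of the order of the total wave strength absorbed into the jump set, which is $O(\Upsilon^\nu(0))$ but is \emph{not} controlled by $\mu_\nu^I+\mu_\nu^{IC}$. The same difficulty affects your ``threshold cost'' accounting for initial points of fronts: the per-event cost is $O(\varepsilon_0)$, but the number of fronts created at $t>0$ is bounded only by $\mu_\nu^I/(\varepsilon_0(\varepsilon_1-\varepsilon_0))$ (each front must grow from $\varepsilon_0$ to $\varepsilon_1$ at interaction cost $\gtrsim\varepsilon_0(\varepsilon_1-\varepsilon_0)$), so the total is $O(\mu_\nu^I/(\varepsilon_1-\varepsilon_0))$, not $O(\varepsilon_0/\varepsilon_1)$. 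The paper explicitly flags this phenomenon: ``The negative part of $\mu_{i,\jump}$ however may not be absolutely continuous w.r.t.~$\mu^{IC}$'' --- shock creation is cheap in a genuinely nonlinear system.

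The paper circumvents this (Lemma~\ref{L:jumpWaveBalRadon}) by treating the two signs asymmetrically. For the \emph{upper} bound it observes that at initial points and at absorption nodes $q_k\leq 0$ by genuine nonlinearity, so these simply do not enter the positive part; the positive part is then absorbed into $\mu_\nu^{IC}$ plus a correction $\sum_h\sigma_h\delta_{(t_h,x_h)}$ at terminal points, which is $O\bigl(\frac{\varepsilon_0}{\varepsilon_1-\varepsilon_0}\bigr)\mu_\nu^{IC}$ and vanishes along $\varepsilon_1^k\geq 2^k\varepsilon_0^k$. For the \emph{negative} part --- exactly the part your argument fails to control --- the paper uses a duality trick: it tests the nonnegative measure $\bar\mu=-\mu_{i,\jump}^{\nu,(\varepsilon_0,\varepsilon_1)}+\OO\mu_\nu^{IC}+\sum_h|\sigma_h|\delta_{(t_h,x_h)}$ against a Lipschitz cutoff $\varphi_\alpha$ in time, which collapses the estimate to the oscillation of $t\mapsto v^\nu_{i,\jump}(t)(\R)$ plus $\mu_\nu^{IC}$, and this oscillation is bounded uniformly by the Glimm functional $\Upsilon^\nu(0)$. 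To repair your argument you would need to replace the pointwise interaction bound at interior absorption nodes with this global Glimm-functional bound (or an equivalent one), rather than hoping the per-node contributions sum up to $\mu_\nu^I+\mu_\nu^{IC}+o(1)$.
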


We prove in this section the proposition above, direct consequence of the more specific statements in Lemmas~\ref{L:waveBalRadon},~\ref{L:jumpWaveBalRadon}.
We call these measures the \emph{$i$-th wave$\backslash$jump wave balance measures} respectively of $u$ and $u^{\nu}$.
We introduce them with the aim of managing finer balances for the variation of waves, distinguishing different families and the part of their variation only due to shocks.
This is technically more difficult and it is postponed to a future work: in the next section we give only rough balances on strips, for $\nu$-front-tracking approximations, that will be applied in order to derive~\eqref{E:decayEst}.

Before stating the lemmas, let us illustrate in the scalar case what the two measures reduce to.

\begin{example}
In the scalar case $\dmns=1$, the speed $\lambda(u)$ is just $f'(u)$, genuine nonlinearity reads like $f''(u) \geq k>0$, the parameterization choice is
\[
\tilde l
=
\begin{cases}
f''(u)
&\text{at continuity points,}
\\
\frac{f'(u^{+})-f'(u^{-})}{u^{+}-u^{-}}%=\lambda(u^{+},u^{-})
&\text{at jump points.}
\end{cases}
\]
The wave measure $v=\tilde l u_{x}$ is then
\[
v=f''(u)(\Difxa u+ \Difxc u )+ \frac{f'(u^{+})-f'(u^{-})}{u^{+}-u^{-}}\Difxj u=\big( f'(u)\big)_{x},
\tag{$\circ$}
\]
where the last equality holds by Volpert chain rule  (Th.~3.99 of~\cite{AFP}).

In the case of a smooth solution one can directly compute that the two measures vanish:
\begin{align*}
\mu&= \partial _{t} v+\partial_{x} (\lambda v) = \partial _{t}( f''(u)u_{x})+\partial_{x} (f'(u) f''(u)u_{x}) 
\\
&= \partial _{t}( \partial_{x}f'(u))+\partial_{x} (f'(u) f''(u)u_{x}) 
= \partial _{x}( \partial_{t}f'(u))+\partial_{x} (f'(u) f''(u)u_{x})
\\
&= \partial_{x}\big[ f''(u)\big( u_{t}+f'(u)u_{x}\big)\big] = 0
.
\end{align*}
One can see that the measure $\mu$, if defined on the solution, vanishes also in the $\BV$ case:
\begin{align*}
\int\varphi\mu&= 
-\int\varphi_{t} \big(\tilde l u_{x} \big) -\int \varphi_{x}  \lambda\big(\tilde l u_{x}\big) 
\stackrel{(\star)}{=}
-\int\varphi_{t} \big(\tilde l u_{x} \big) +\int \varphi_{x} \big(\tilde l u_{t}\big) 
\\
&\stackrel{(\circ)}{=} 
-\int\varphi_{t} \big(f'(u)  \big)_{x} +\int \varphi_{x} \big(\tilde l u_{t}\big) 
= 
-\int\varphi_{x} \big(f'(u)  \big)_{t} +\int \varphi_{x} \big(\tilde l u_{t}\big) 
\\
&= 
\int \varphi_{x}\big[-\big(f'(u)  \big)_{t} + \big(\tilde l u_{t}\big)\big] 
\stackrel{(\bullet)}{=}0
,
\end{align*}
where we applied repeatedly Volpert chain rule (for example at $(\bullet)$) and at $(\star)$ we applied the conservation law $-u_{t}=(f(u))_{x}=\lambda u_{x}$.

The measure $\mu_{\jump}$ defined on the solution, in the case of $\BV$ regularity, can be similarly computed as
\[
-\int \varphi\mu_{\jump}
\stackrel{(\circ)}{=} 
\int \big(\varphi_{t}+\lambda\varphi_{x}\big)\lambda(u^{+},u^{-})\Difxj u
=
\sum_{k\in\N}
\int \frac{d}{dt} \Big[\varphi(t,\gamma_{k}(t)) \Big] 
	\Big(\lambda^{+}-\lambda^{-} \Big)(t,\gamma_{k}(t)) dt
,
\]
where we denoted by $\{\gamma_{k}\}_{k\in\N}$ Lipschitz curves covering the jump set of $u_{x}$.
We now make some heuristics. Knowing that $\mu$ is a Radon measure, one obtains that the strength $\lambda^{+}(t,\gamma_{k}(t))-\lambda^{-}(t,\gamma_{k}(t))=\int_{u^{-}(t,\gamma_{k}(t))}^{u^{+}(t,\gamma_{k}(t))}f''(s)ds $ of the jump $\gamma_{k}$ is a function of bounded variation on time intervals where $\gamma_{k} $ is separated from the other curves.
In that case, if one had some regularity of $u^{\pm}(t,\gamma_{k}(t))$ one could derive
\[
\begin{split}
\int \varphi\mu_{\jump}
=
\sum_{k\in\N}
\bigg\{\int \varphi(t,\gamma_{k}(t))
	\big[(f''(u^{+})u^{+}_{t}-f''(u^{-})u^{-}_{t})-\lambda (f''(u^{+})u^{+}_{x}-f''(u^{+})u^{-}_{x}))\big](t,\gamma_{k}(t)) dt\
\\
+ \big\{\varphi \big[f'(u^{+})-f'(u^{-})\big]\big\}|_{(t^{0}_{k},\gamma_{k}(t_{k}^{0}))}- \big\{\varphi \big[f'(u^{+})-f'(u^{-})\big]\big\}|_{(t^{1}_{k},\gamma_{k}(t_{k}^{1}))}
\bigg\}.
\end{split}
\]
\end{example}

We now show that $\mu^{\nu}_{i}$ is a measure concentrated on interaction points, and mainly on interactions between physical waves, where it is controlled by the interaction measure $\mu^{I}$. At the interaction point $(t,x)$, remembering that only one interaction may take place at one time,
\[
v_{i}^{\nu}(t)\big(\R\big)-\lim_{\epsilon\downarrow 0}v^{\nu}_{i}(t-\epsilon)\big(\R\big)= \mu^{\nu}_{i} \big(\{(t,x)\}\big).
\]

\begin{lemma}
\label{L:waveBalRadon}
The distribution $\mu^{\nu}_{i}$ is a Radon measure satisfying $|\mu^{\nu}_{i}|\leq\OO\mu^{I}_{\nu} +\rho^\nu$ for a purely atomic measure $|\rho^\nu|\leq \OO\varepsilon_{\nu}$ concentrated on interaction points involving non-physical waves.
In the $w^{*}$-limit in $\nu$ then one has $|\mu^{}_{i}|\leq \OO\mu^I $.
\end{lemma}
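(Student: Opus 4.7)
The plan is to exploit the piecewise-constant structure of $u^{\nu}$ to reduce the computation of $\mu^{\nu}_{i}$ to a finite sum of nodal contributions on the wave-front mesh, and then to invoke the classical Glimm-type interaction estimates of~\cite{Bressan}, Ch.~7.

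\textbf{Step 1 (wave-front structure).} Since $u^{\nu}(t,\cdot)$ is piecewise constant, the wave decomposition~\eqref{E:waveDec} gives that $v^{\nu}_{i}=\sum_{k} \sigma^{(i)}_{k}\, dt\otimes\delta_{x-\gamma_{k}(t)}$, where the sum runs over all fronts $\gamma_{k}$ of $u^{\nu}$. By the strength identity~\eqref{E:omegai}, the $i$-component $\sigma^{(i)}_{k}$ vanishes on physical $j$-fronts with $j\neq i$, equals the strength on physical $i$-fronts, and is small but possibly nonzero on non-physical fronts. On any segment between two consecutive interactions, $\sigma^{(i)}_{k}$ is constant in $t$, and on an $i$-physical segment the slope $\dot\gamma_{k}$ coincides with $\bar\lambda^{\nu}_{i}$ by Remark~\ref{R:barlambda}. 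The identity $\partial_{t}\delta_{x-\gamma_{k}(t)}+\partial_{x}(\dot\gamma_{k}\delta_{x-\gamma_{k}(t)})=0$ then shows that $\mu^{\nu}_{i}$ vanishes on the interior of each such segment.

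\textbf{Step 2 (nodal atoms).} Testing $\mu^{\nu}_{i}$ against $\varphi\in C^{\infty}_{c}$ supported near an interaction point $P$, integration by parts segment-by-segment gives
\[
\langle\mu^{\nu}_{i},\varphi\rangle=-\sum_{k}\sigma^{(i)}_{k}\bigl[\varphi(t,\gamma_{k}(t))\bigr]_{t=t^{-}_{k}}^{t=t^{+}_{k}},
\]
and regrouping boundary terms by common nodes yields
\[
\mu^{\nu}_{i}(\{P\})=\sum_{\text{outgoing }i\text{-fronts at }P}\sigma^{out}-\sum_{\text{incoming }i\text{-fronts at }P}\sigma^{in}.
\]
Thus $\mu^{\nu}_{i}$ is purely atomic, supported at the interaction points of the wave-front mesh, and each atom is the $i$-wave balance at the node.

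\textbf{Step 3 (interaction estimates).} I would classify each interaction point $P$ into two types. If $P$ is an interaction of two physical waves with incoming strengths $\sigma'$, $\sigma''$, the outgoing Riemann fan is resolved by the exact Lax solver, whose $\sigma_{i}$-coordinates are $C^{2}$-close to the identity; the classical Glimm estimate (Section~7.3 of~\cite{Bressan}) gives $|\mu^{\nu}_{i}(\{P\})|=\OO|\sigma'\sigma''|=\OO\mu^{I}_{\nu}(\{P\})$. If instead $P$ involves a non-physical wave, the simplified Riemann solver leaves the incoming physical strengths unchanged, and $\mu^{\nu}_{i}(\{P\})$ records only the $i$-component of a newly created/absorbed non-physical wave. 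Since the total strength of non-physical waves at every time is bounded by $\varepsilon_{\nu}$ in the wave-front tracking construction, the atomic measure $\rho^{\nu}$ obtained by restricting $\mu^{\nu}_{i}$ to the non-physical interaction nodes satisfies $|\rho^{\nu}|\leq\OO\varepsilon_{\nu}$. Combining, $|\mu^{\nu}_{i}|\leq\OO\mu^{I}_{\nu}+|\rho^{\nu}|$.

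\textbf{Step 4 (passage to the limit).} By Corollary~\ref{C:weakConvergences}, $v^{\nu_{k}}_{i}\stackrel{*}{\rightharpoonup}v_{i}$ and $\bar\lambda^{\nu_{k}}_{i}v^{\nu_{k}}_{i}\stackrel{*}{\rightharpoonup}\lambda_{i}v_{i}$ along a suitable subsequence; hence $\mu^{\nu_{k}}_{i}\stackrel{*}{\rightharpoonup}\mu_{i}$ as distributions. Since $\mu^{I}_{\nu_{k}}\stackrel{*}{\rightharpoonup}\mu^{I}$ and $|\rho^{\nu_{k}}|\leq\OO\varepsilon_{\nu_{k}}\to 0$, the inequality above passes to the limit to yield $|\mu_{i}|\leq\OO\mu^{I}$, which in particular shows that $\mu_{i}$ is a finite Radon measure. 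The main obstacle is the bookkeeping in Step~3: outgoing rarefaction fans at a physical-physical interaction are split into many small jumps of size at most $\nu$, and one must verify that the sum of their $i$-strengths respects the quadratic interaction estimate (a classical but delicate verification), while simultaneously checking that the definition of $\bar\lambda^{\nu}_{i}$ on non-physical fronts is compatible with the atomic structure of $\mu^{\nu}_{i}$ so that all non-physical contributions are indeed absorbed into $\rho^{\nu}$.
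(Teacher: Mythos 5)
Your Steps 1, 2, and 4 match the paper's argument closely: the segment-by-segment integration by parts, the resulting nodal atoms $p_k = \sigma_i - \sigma_i' - \sigma_i''$, the Glimm estimate $|\sigma_i - \sigma_i' - \sigma_i''| \leq \OO|\sigma'\sigma''|$ at physical interactions, and the lower-semicontinuity passage to the limit are all correct and essentially identical to what the paper does.

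The gap is in Step 3, in the bound $|\rho^\nu| \leq \OO\varepsilon_\nu$. You assert that at a node $P$ involving a non-physical front ``$\mu^\nu_i(\{P\})$ records only the $i$-component of a newly created/absorbed non-physical wave,'' and you then cite the bound $\varepsilon_\nu$ on the total non-physical strength at a fixed time as if it directly controlled the \emph{accumulated mass} of $\rho^\nu$ over all of $\R^+\times\R$. Neither claim holds up. First, a non-physical front contributes an atom to $\rho^\nu$ at \emph{every} node it passes through, not only at creation: even though the simplified Riemann solver leaves the strength $u^+-u^-$ of the non-physical front unchanged when it crosses a physical wave, the coefficient $\tilde l_i$ on that front depends on the side values $u^\pm$ themselves, and those shift at every crossing, so $\rho^\nu(\{P\}) = \tilde l_i^{+}\cdot\sigma - \tilde l_i^{-}\cdot\sigma'$ is generically nonzero there. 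Second, summing these contributions over all nodes and all time requires a genuine argument — the per-time bound $\varepsilon_\nu$ says nothing a priori about the sum over a potentially large number of nodes. The paper handles this by a telescoping argument with the Glimm functional: one introduces a reduced functional $\Upsilon^\nu$ obtained from the full $\mathbf\Upsilon^\nu$ by deleting all non-physical terms, observes that $\Upsilon^\nu$ does not decrease at non-physical interactions, that $0\leq \mathbf\Upsilon^\nu - \Upsilon^\nu \leq \OO\varepsilon_\nu$ at every time, and then telescopes the jumps of $\mathbf\Upsilon^\nu - \Upsilon^\nu$ to get $|\rho^\nu| \leq \OO\varepsilon_\nu$. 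Some version of this accounting (or at least a correct argument that the total interaction between physical and non-physical fronts over all time is $\OO(\varepsilon_\nu\cdot\TV)$) is needed; as written, your Step 3 skips the part of the proof that is actually nontrivial.
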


\begin{proof}
We proceed by direct computation of the distribution, that will be a measure concentrated at interaction points.
Then we estimate the value at each point.
\paragr{1) Computing $\mu_{i}^{\nu} $.}
Fix an index $\nu$ and let $\{\ell_m\}_{m=1}^{L_{\nu}}$ be time-parametrized curves whose graphs are respectively the discontinuity segments of $u^\nu$.
Given any test function $\varphi\in C^{\infty}_{\rc}((0,+\infty)\times\R)$ one has
\begin{align*}
-\langle \varphi, \mu_{i}^{\nu}\rangle
&=\iint \big(\varphi_{t}+ \bar\lambda^{}_{i} \varphi_{x})  v^{\nu}_{i}
=\sum_{m=1}^{L_{\nu}}\int_{ \tau^{-}_{m} }^{\tau^{+}_{m}}
\big[ \big(\varphi_{t}+ \bar\lambda^{}_{i} \varphi_{x} \big)  \tilde l_{i}^{} \cdot \big(u^{+}-u^{-}\big) \big]_{x= \ell_m(t)} dt 
\\
&=\sum_{m=1}^{L_{\nu}}\int_{ \tau^{-}_{m} }^{\tau^{+}_{m}} \frac{d}{dt} \Big(\varphi(t, \ell_m(t)) \Big)  \tilde l_{i}^{} \cdot \big(u^{+}-u^{-}\big) dt 
\\
&= \sum_{m=1}^{L_{\nu}} \Big( \varphi( \tau^{+}_{m}, \ell_m( \tau^{+}_{m}))-\varphi(\tau^{-}_{m}, \ell_m( \tau^{-}_{m})) \Big)  \tilde l_{i}^{} \cdot \big(u^{+}-u^{-}\big) .
\end{align*}
As noticed in~\eqref{E:sigmalu}, if $u^+=\Psi_{i}(\sigma_{i})u^-$ by construction $ \tilde l_{j}^{} \cdot \big(u^{+}-u^{-}\big) =0$ for $j\neq i$, and for $j=i$ it is the strength of the wave, that we denote again by $\sigma_{i}$ (with an abuse of notation in case of genuinely non-linear shocks).
The non-vanishing terms in the summation above therefore are the ones of discontinuities of the $i$-th family and non-physical fronts.
If $\{(\tau_{k},z_{k})\}_{k}$ is the collection of `nodes' of $i$-th fronts of $u^\nu$ for $t>0$, the computation above yields
\begin{subequations}
\label{EG:muinu}
\begin{equation}
\mu_{i}^{\nu}
= \sum_{k} p_k \delta_{(\tau_{k},z_{k})} +\rho^\nu,
\end{equation}
where, denoting at each node by $\sigma_i',\sigma_i''$ the $i$-th component of the incoming strengths, $\sigma_i$ the outgoing,
\begin{equation}
p_k=\sigma_i - \sigma'_i - \sigma''_i
\end{equation}
\end{subequations}
while $\rho^\nu$ is concentrated on the nodes of non-physical waves, with
\[
\rho^\nu \big(\{P\} \big)= \tilde l_{i}^{+} \cdot\sigma -\tilde l_{i}^{-} \cdot \sigma',
\]
$\sigma=u^{+}-u^{-}$,  $\sigma'=u'^{+}-u'^{-}$ being the outgoing$\backslash$incoming strengths of the non-physical wave at $P$, $\tilde l_{i}^{\pm} $ the outgoing$\backslash$incoming values of $\tilde l_{i} $ on the non-physical front.
Notice that $\sigma'$ or $\sigma''$ in the above expression may also vanish, for example in the last $\sigma'=0$ when a new non-physical wave originates.

\paragr{2) Estimates.}
In order to bound $\mu^{\nu}_{i}$ one considers the Glimm functional of~\cite{Bressan}
\[
\mathbf\Upsilon ^\nu 
=\sum_{\alpha} |\sigma_{\alpha}|
+ C_{0} \sum_{(\alpha,\beta)} |\sigma_{\alpha}\sigma_{\beta}|
\qquad
C_{0}\gg 1,
\]
where $\alpha,\beta$ index the discontinuities at time $t$, with corresponding strengths $\sigma_{\alpha}$, $\sigma_{\beta}$, and the second summation ranges over couples $(\alpha,\beta)$ whose discontinuity points $x_{\alpha}<x_{\beta}$ either belong to families $k_{\alpha}>k_{\beta}\in\{1,\dots,\dmns+1\}$ or belong to a same family but at least one is a genuinely non-linear shock.
Let $\Upsilon ^\nu $ be the same functional, but neglecting in both the summations all terms involving strengths of non-physical waves.
Since the total strength of non-physical waves at fixed time is controlled by $\varepsilon_{\nu}$ by construction of $\nu$-approximate front-tracking solution, then $|\mathbf\Upsilon ^\nu -\Upsilon^\nu |\leq \OO\varepsilon_{\nu}$.

By choosing  $C_{0}$ big enough and the smallness of the total variation, by those estimates in Lemma~7.2, Page 133, of~\cite{Bressan} and Pages 137, 138, at each interaction at time $\tau$
\[
|\mu_i^\nu|\big(\{Q\}\big)\leq \OO |\sigma'\sigma''|
\leq \OO \big\{\mathbf\Upsilon ^\nu(\tau_{k}^-) - \mathbf\Upsilon ^\nu(\tau_{k}^+)\big\}.
\]
If $\mathfrak T^{p}$ is the set of interaction times involving only physical waves, by definition of the interaction measure
\[
{|\mu_i^\nu|}\llcorner_{\mathfrak T^{p}\times \R}\leq \OO {\mu^I_{\nu} }\llcorner_{\mathfrak T^{p}\times \R},
\qquad
 \mu^I_{\nu}(\R^{+}\times\R)\leq \mathbf\Upsilon^{\nu}(0) .
\]
For the set $\mathfrak T^{n}$ of interaction times involving some non-physical front, instead one has the bound
\begin{align*}
|\rho ^\nu |
&\leq 
\sum_{\tau\in \mathfrak T^{n}} \OO \big\{\mathbf\Upsilon ^\nu(\tau^-) - \mathbf\Upsilon ^\nu(\tau^+)\big\}
\\
&\stackrel{(\circ)}{=}
\sum_{\tau\in \mathfrak T^{n}} \OO \big\{\mathbf\Upsilon ^\nu(\tau^-) - \mathbf\Upsilon ^\nu(\tau^+)- [\Upsilon ^\nu(\tau_{}^-) - \Upsilon ^\nu(\tau_{}^+)]\big\}
\\
&\stackrel{(\star)}{\leq}
\sum_{\tau\in \mathfrak T^{p}\cup \mathfrak T^{n}} \OO \big\{\mathbf\Upsilon ^\nu(\tau_{}^-) - \mathbf\Upsilon ^\nu(\tau_{}^+) 
	- [\Upsilon ^\nu(\tau_{}^-) - \Upsilon ^\nu(\tau_{}^+)]\big\}
+ \OO \varepsilon_{\nu} |\mu^{I}_{\nu} |
\\
&= 
\OO \big\{[\mathbf\Upsilon ^\nu(0) -\Upsilon ^\nu(0) ]- [\mathbf\Upsilon ^\nu(\infty)-\Upsilon ^\nu(\infty) ]\big\}
+ \OO \varepsilon_{\nu} |\mu^{I}_{\nu} |
\leq
\OO \varepsilon_{\nu}
,
\end{align*}
because at interactions involving also non-physical waves $\Upsilon^{\nu}$ does not vary ($\circ$), while at interactions of two physical waves $\sigma'$, $\sigma''$ e.g.~of the $i$-th family most of the terms cancel ($\star$):
\[
\mathbf\Upsilon ^\nu(\tau_{}^-) - \mathbf\Upsilon ^\nu(\tau_{}^+) 
	- [\Upsilon ^\nu(\tau_{}^-) - \Upsilon ^\nu(\tau_{}^+)] 
\leq 
\OO \bigg( \sum_{k\neq i}|\sigma^{k}| +|\sigma_{}'+\sigma_{}''-\sigma_{}| \bigg)\sum_{\beta\text{ non physical}} |\sigma_{\beta}|.
\]

One can gain the estimates on $\mu_{i}$ in the $\nu$-limit because $\mathbf\Upsilon^{\nu}(0)$ can be bounded uniformly in $\nu$.
\end{proof}

The following lemma deals instead with the jump-wave balance measure, which is again concentrated on interactions.
At the interaction point $(t,x)$ one has
\[
v_{i}^{\nu}(t)\big(\mathcal J^{\nu,i}_{(\varepsilon_{0},\varepsilon_{1})}\big)-\lim_{\epsilon\downarrow 0}v^{\nu}_{i}(t-\epsilon)\big(\mathcal J^{\nu,i}_{(\varepsilon_{0},\varepsilon_{1})}\big)= \mu^{\nu}_{i} \big(\{(t,x)\}\big).
\]
The positive part is absolutely continuous w.r.t.~the interaction-cancellation measure, up to a remainder.
The bound on the negative part is obtained instead considering the history of the interacting waves, and involves the Glimm functional $\Upsilon $.

Notations are illustrated in Figure~\ref{fig:lemma}.
Denote by $\{(t_{h},x_{h})\}_{h}$ the finite set of terminal points of the $i$-th $(\varepsilon_{0},\varepsilon_{1})$-jump set $\mathcal J^{\nu,i}_{(\varepsilon_{0},\varepsilon_{1})}$ of $u^{\nu}$ and let $\{\gamma_{h}\}_{h}$, $\{\sigma_{h}\}_{h}$ denote the relative maximal $(\varepsilon_{0},\varepsilon_{1})$-shock front and the strength $\leq \varepsilon_{0}$ just after becoming below the threshold (if any, otherwise vanishes).
\begin{figure}[ht!] 
   \centering
   \def\svgwidth{.6\columnwidth} % sets the image width, this is optional
   \input{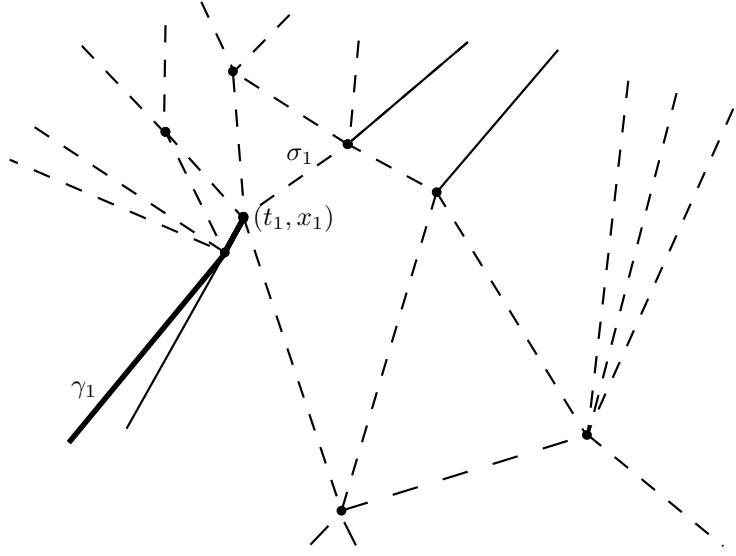}
\caption[Illustration of Lemma~\ref{L:jumpWaveBalRadon}]{Discontinuity lines of $u^{\nu}$ belonging to the $i$-th $(\varepsilon_{0},\varepsilon_{1})$-jump set $\mathcal J^{\nu,i}_{(\varepsilon_{0},\varepsilon_{1})}$.
Dashed lines either are below the threshold (e.g.~$\sigma_{1}$) or belong to a different family $j\neq i$.}
\label{fig:lemma}
\end{figure}
\begin{lemma}
\label{L:jumpWaveBalRadon}
The positive part of $\mu_{i,\jump}^{\nu,(\varepsilon_{0},\varepsilon_{1})}$ satisfies the following inequality
\[
\mu_{i,\jump}^{\nu,(\varepsilon_{0},\varepsilon_{1})}
\leq 
\OO \mu_{\nu}^{IC}-\sum_{h} \sigma_{h}\delta_{(t_{h}, x_{h})}
\qquad
|\sigma_{h}|\leq \OO\frac{\varepsilon_{0}}{\varepsilon_{1}-\varepsilon_{0}}\mu_{\nu}^{IC}(\gamma_{h}) .
\]
Also the negative part of $\mu^{\nu,(\varepsilon_{0},\varepsilon_{1})}_{i,\jump}$ is concentrated on the set of nodes of $\mathcal J^{\nu,i}_{(\varepsilon_{0},\varepsilon_{1})}$ and its mass is uniformly bounded by the Glimm functional $\Upsilon$.
In the $w^{*}$-limit of Corollary~\ref{C:weakConvergences} one gets
\[
-\rho_{}
\leq 
\mu_{i,\jump}
\leq 
\OO \mu_{}^{IC} 
\qquad
\text{$\rho_{}\geq 0$ finite.}
\]
\end{lemma}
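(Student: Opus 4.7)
My plan is to mirror the computation in Lemma~\ref{L:waveBalRadon}, restricting the integration to the polygonal curves forming $\mathcal J^{\nu,i}_{(\varepsilon_0,\varepsilon_1)}$. Pairing $\mu_{i,\jump}^{\nu,(\varepsilon_0,\varepsilon_1)}$ against a test function $\varphi$ and using that each segment of a maximal shock front travels with speed $\bar\lambda_i$ while carrying constant $i$-wave strength, the integral $\iint(\varphi_t+\bar\lambda_i\varphi_x)v_i^\nu\llcorner_{\mathcal J^{\nu,i}_{(\varepsilon_0,\varepsilon_1)}}$ telescopes over each curve $\gamma^{\nu,i}_{(\varepsilon_0,\varepsilon_1),m}$ to its two boundary-node contributions. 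This shows $\mu_{i,\jump}^{\nu,(\varepsilon_0,\varepsilon_1)}$ is purely atomic, supported at the nodes of $\mathcal J^{\nu,i}_{(\varepsilon_0,\varepsilon_1)}$, with mass at a node $Q$ equal to the outgoing $i$-wave strengths in $\mathcal J$ minus the incoming $i$-wave strengths in $\mathcal J$. Nodes split into three types: \emph{birth} nodes, where an interaction produces a new outgoing $i$-wave exceeding $\varepsilon_1$; \emph{interior} nodes, where an existing maximal front persists through an interaction; and \emph{terminal} nodes $(t_h,x_h)$, where the outgoing $i$-wave has strength $|\sigma_h|\le\varepsilon_0$ and so exits $\mathcal J$.

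For the upper bound I would compare this atomic mass to the full $i$-wave balance (outgoing total $i$-strength minus incoming total $i$-strength), which at each node is controlled by $\OO\mu_\nu^{IC}(\{Q\})$ via the same Glimm interaction estimates invoked in Lemma~\ref{L:waveBalRadon}. Thresholding discrepancies at birth and interior nodes are absorbed into $\mu_\nu^{IC}$, while at a terminal node the missing outgoing contribution is exactly $\sigma_h$, producing the correction $-\sigma_h\delta_{(t_h,x_h)}$ on the right-hand side. To estimate $|\sigma_h|$ I would track the strength along $\gamma_h$: by the definition of a maximal $(\varepsilon_0,\varepsilon_1)$-shock front, the strength is $\ge\varepsilon_1$ at some point and has become $\le\varepsilon_0$ at the endpoint, and since a Lax $i$-shock can lose strength only through (interaction-)cancellation, the drop $\varepsilon_1-\varepsilon_0$ is bounded by $\OO\mu_\nu^{IC}(\gamma_h)$. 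Combined with $|\sigma_h|\le\varepsilon_0$, this gives $|\sigma_h|\le\OO(\varepsilon_0/(\varepsilon_1-\varepsilon_0))\mu_\nu^{IC}(\gamma_h)$. Along the sequence of Corollary~\ref{C:weakConvergences} with $\varepsilon_1^k/\varepsilon_0^k\ge 2^k$, the total mass of the correction is $\OO(\varepsilon_0^k/(\varepsilon_1^k-\varepsilon_0^k))\mu^{IC}(\R^+\times\R)\to 0$, yielding the limit bound $\mu_{i,\jump}\le\OO\mu^{IC}$.

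The negative part is asymmetric: at a transversal interaction a maximal front may leave $\mathcal J$ while $\mu_\nu^{IC}(\{Q\})$ is only of order $|\sigma'\sigma''|$, so the negative atomic mass (of order $\varepsilon_0$) need not be dominated by $\mu_\nu^{IC}(\{Q\})$. I would therefore bound the negative mass at each node by the corresponding drop of the Glimm functional $\mathbf\Upsilon^\nu$ and sum telescopically in time, obtaining a uniform bound of order $\mathbf\Upsilon^\nu(0)$; this uniform bound passes to the $w^*$-limit as a finite nonnegative measure $\rho$ with $-\rho\le\mu_{i,\jump}$. The main obstacle I expect is the terminal-point estimate, because the ratio $\varepsilon_0/(\varepsilon_1-\varepsilon_0)$ is exactly what makes the correction disappear in the limit, and obtaining it requires the sharp one-sided monotonicity that a Lax $i$-shock can only shrink through cancellation incurred along its own history $\gamma_h$; extracting this from the interaction bookkeeping of~\cite{Bressan}, uniformly across nodes and threshold choices, is the delicate point of the argument.
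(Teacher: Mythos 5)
Your computation of $\mu_{i,\jump}^{\nu,(\varepsilon_0,\varepsilon_1)}$ as a purely atomic measure at the nodes of $\mathcal J^{\nu,i}_{(\varepsilon_0,\varepsilon_1)}$, and your upper-bound argument including the terminal-point estimate, match the paper's proof: the key observation that the threshold drop from $\varepsilon_1$ to $\varepsilon_0$ along $\gamma_h$ can only occur through cancellation, yielding $\varepsilon_1-\varepsilon_0\le\OO\mu_\nu^{IC}(\gamma_h)$ and then $|\sigma_h|\le\varepsilon_0\le\frac{\varepsilon_0}{\varepsilon_1-\varepsilon_0}\OO\mu_\nu^{IC}(\gamma_h)$, is exactly the paper's.

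However, the argument you propose for the negative part has a genuine gap. You suggest to ``bound the negative mass at each node by the corresponding drop of the Glimm functional $\mathbf\Upsilon^\nu$ and sum telescopically,'' but this per-node bound fails. The dominant negative atoms occur at birth points of maximal fronts: by formula~\eqref{EG:mujump} the weight there is the outgoing strength $q_k=\sigma\le-\varepsilon_0$, which is \emph{linear} in the incoming data. A typical such node is a merger of two $\bar\imath$-shocks each of strength just below $\varepsilon_0$ producing an outgoing shock of strength just above $\varepsilon_0$; the resulting negative atom has $|q_k|\approx\varepsilon_0$, yet the Glimm functional $\mathbf\Upsilon^\nu$ drops only by the quadratic amount $\OO(|\sigma'\sigma''|)\approx\varepsilon_0^2/4$ (the linear part $\sum|\sigma_\alpha|$ is essentially conserved at a shock--shock merger). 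Thus $|q_k|$ is not dominated by the local drop of $\mathbf\Upsilon^\nu$, and the telescoping sum you describe does not control the negative mass. The paper's argument is instead global in time: one observes that the nonnegative measure $\bar\mu=-\mu_{i,\jump}^{\nu,(\varepsilon_0,\varepsilon_1)}+\OO\mu^{IC}_\nu+\sum_h|\sigma_h|\delta_{(t_h,x_h)}$ can be tested against a Lipschitz truncation $\varphi_\alpha$ of $\chi_{[0,T]}$; since $(\varphi_\alpha)_x=0$, only $\big[v_{i,\jump}^\nu(t)\big](\R)$ appears, and as $\alpha\downarrow 0$ one bounds $\bar\mu([0,T]\times\R)$ by the change of $\big[v_{i,\jump}^\nu(t)\big](\R)$ plus $\OO\mu^{IC}_\nu$, i.e.\ by the uniform bound $\OO\Upsilon^\nu(0)$ on the total mass of $v^\nu_{i,\jump}$. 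In short: the negative part is controlled not node-by-node but through the balance identity combined with the already-established upper bound on the positive part and the a priori bound on $|v^\nu_{i,\jump}(t)|(\R)$.
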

\begin{proof}
As before, we proceed by direct computation of the measure.
\paragr{1) Computing $\mu^{\nu,(\varepsilon_{0},\varepsilon_{1})}_{i,\jump} $.}
One has to take into account that some of the shocks of $u^{\nu}$ are not present in the maximal $(\varepsilon_{0},\varepsilon_{1})$-shock fronts, and none of the rarefaction fronts and non-physical waves appears.
Fix an index $\nu$ and let $\{\gamma_m\}_{m=1}^{L_{\nu}}$ be time-parametrized curves whose graphs are respectively the discontinuity \emph{segments} of $u^\nu$ \emph{present} in $\mathcal J^{\nu,i}_{(\varepsilon_{0},\varepsilon_{1})}$.
Given any test function $\varphi\in C^{\infty}_{\rc}((0,+\infty)\times\R)$, and omitting for simplicity most of the indices $\nu,(\varepsilon_{0},\varepsilon_{1}),i $ from the second line below up to the end of the proof, one has
\begin{align*}
-\langle \varphi, \mu^{\nu,(\varepsilon_{0},\varepsilon_{1})}_{i,\jump} \rangle
&=\iint \big(\varphi_{t}+ \bar\lambda^{\nu}_{i} \varphi_{x})  v^{\nu}_{i}\llcorner_{\mathcal J^{\nu,i}_{(\varepsilon_{0},\varepsilon_{1})}} \\
&=\sum_{m}
\int_{ \tau^{-}_{m} }^{\tau^{+}_{m}} \frac{d}{dt} \Big(\varphi(t,\gamma_{m}(t)) \Big)  \tilde l_{}^{} \cdot \big(u^{+}-u^{-}\big) dt 
\\
&= \sum_{m}
\Big( \varphi( \tau^{+}_{m},\gamma_{m}( \tau^{+}_{m}))-\varphi(\tau^{-}_{m}, \gamma( \tau^{-}_{m}) ) \Big)\sigma_{i} .
\end{align*}
If $\{(\tau_{k},z_{k})\}_{k}$ are the nodes in $\mathcal J^{\nu,i}_{(\varepsilon_{0},\varepsilon_{1})}$ for $t>0$, then the computation above can be rewritten as
\begin{subequations}
\label{EG:mujump}
\begin{equation}
\mu^{\nu,(\varepsilon_{0},\varepsilon_{1})}_{i,\jump}=\sum_{k} q_{k}\delta_{(\tau_{k},z_{k})},
\end{equation}
where, denoting at each node by $\sigma',\sigma''$ the $i$-incoming strengths, $\sigma$ the outgoing, one has
\begin{equation}
q_{k}=
\begin{cases}
\sigma-\sigma'-\sigma''	&\text{at triple points of $\mathcal J^{\nu,i}_{(\varepsilon_{0},\varepsilon_{1})}$},
\\
\sigma	&\text{at initial points of maximal fronts of $\mathcal J^{\nu,i}_{(\varepsilon_{0},\varepsilon_{1})}$},
\\
-\sigma'	&\text{when a maximal front of $\mathcal J^{\nu,i}_{(\varepsilon_{0},\varepsilon_{1})}$ ends without merging into another},
\\
\sigma-\sigma'	&\text{otherwise}.
\end{cases}
\end{equation}
\end{subequations}

\paragr{2) Upper bound.}
At triple points of $\mathcal J^{\nu,i}_{(\varepsilon_{0},\varepsilon_{1})}$ one has $|q_{k}|\leq \OO\mu^{I}_{\nu}$ by the interaction estimates in Lemma~7.2 of~\cite{Bressan} and by definition of the interaction measure~\eqref{E:intrInterCanc}.

At initial points of $\mathcal J^{\nu,i}_{(\varepsilon_{0},\varepsilon_{1})}$, by genuine nonlinearity $q_{k}\leq 0$.

At internal nodal points of a front where
\begin{itemize}
\item[-] another shock of the same family, not belonging to any front in $\mathcal J^{\nu,i}_{(\varepsilon_{0},\varepsilon_{1})} $, merges, again $q_{k}\leq 0$;
\item[-] a rarefaction wave front interacts, a cancellation occurs and $|q_{k}|= |\sigma-\sigma'| \leq\mu^{IC}_{\nu}(t_{k},x_{k})$ by definition of the interaction-cacellation measure in~\eqref{E:intrInterCanc};
\item[-] any curve of different family interact, interaction takes place and $|q_{k}|= |\sigma-\sigma'| \leq \OO\mu^{I}_{\nu}(t_{k},x_{k})$.
\end{itemize}

At terminal points, since the shock gets cancelled from $\mathcal J^{\nu,i}_{(\varepsilon_{0},\varepsilon_{1})}$, there must be a cancellation by a rarefaction front or by a wave of different family. If $\gamma_{k}$ is the maximal $(\varepsilon_{0},\varepsilon_{1})$-shock front through the point, by inspection one can see that the strength must increase from a value $\sigma_{0}\leq -\varepsilon_{1}$ at some point up to a value $\leq -\varepsilon_{0}$ on the last segment, which colliding the other wave front becomes $\sigma_{1} $ and exceeds $-\varepsilon_{0}$.
By the interaction estimates, at nodal points of each front the strength may increase at most of the amount of interaction-cancellation at that point. One can then conclude
\[
\varepsilon_{1}-\varepsilon_{0} \leq
|\sigma_{0}|-|\sigma_{1}| \leq \TV^{+}(u, \gamma_{k} ) \leq \OO \mu^{IC}_{\nu}(\gamma_{k}).
\]
This yields the final estimate, at terminal points:
%\[
%-\sigma'_{h}=|q_{k}| \leq \varepsilon_{0}+\nu
%=\frac{\varepsilon_{0}+\nu}{\varepsilon_{1}-\varepsilon_{0}}(\varepsilon_{1}-\varepsilon_{0})
%\leq
%\frac{\OO(\varepsilon_{0}+\nu)}{\varepsilon_{1}-\varepsilon_{0}}  \mu_{\nu}^{IC}(\gamma_{k}).
%\]
%
\[
q_k
=
-\sigma_1 + (\sigma_1-q_k)
\leq
\frac{\varepsilon_{0}}{\varepsilon_{1}-\varepsilon_{0}}(\varepsilon_{1}-\varepsilon_{0})
+ \OO\mu^{I}_{\nu}(t_{k},x_{k})
\leq
\frac{\varepsilon_0}{\varepsilon_1-\varepsilon_0}
\OO\mu^{IC}_\nu(\gamma_k) + \OO\mu^{I}_{\nu}(t_{k},x_{k}).
\]
Notice that $\sigma_{1}$ is one of the $\sigma_{k}$ in the first statement.

By summing up the different contributions, being
\[
\sum | \sigma_{h}| \leq \OO\frac{\varepsilon_{0}}{\varepsilon_{1}-\varepsilon_{0}}    \mu^{IC}_{\nu}(\R^{+}\times \R), 
\]
one gains that $\mu^{\nu,(\varepsilon_{0},\varepsilon_{1})}_{i,\jump}-\OO\mu_{\nu}^{IC}+ \sum  \sigma_{h} \delta_{(t_{h},x_{h})}$ is a signed measure, and thus $\mu^{\nu,(\varepsilon_{0},\varepsilon_{1})}_{i,\jump} $ is a Radon measure.
We got an upper bound, uniform w.r.t~$\nu$, of its positive mass, and we also got that the limit in $\nu$ in the sense of distributions, i.e.~$\mu_{i,\jump}$, is a Radon measure.
Notice that $\sum |\sigma_{h}|$ by the estimate above vanishes in the limit of Corollary~\ref{C:weakConvergences}, as we have chosen $\varepsilon^{k}_{1}\geq 2^{k}\varepsilon_{0}^{k}$.

\paragr{3) Lower bound.}
We derive now a lower bound uniform w.r.t~$\nu$ of the negative mass of $\mu^{\nu,(\varepsilon_{0},\varepsilon_{1})}_{i,\jump} $, suitable to estimate that the negative part of the limit $\mu_{i,\jump}$ is actually a finite measure.

Consider the nonnegative measure $\bar\mu= -\mu_{i,\jump}^{\nu,(\varepsilon_{0},\varepsilon_{1})} +\OO\mu_{\nu}^{IC}+ \sum | \sigma'_{h}| \delta_{(t_{h},x_{h})}$.
Consider for $\alpha>0$ the Lipschitz test function $\varphi_{\alpha}(t)= \chi_{[0,T+\alpha]}{(t)} -(t-T)/\alpha\chi_{[T, T+\alpha]}(t)$: being the time marginal $ v^{\nu}_{i,\jump}(t)$ of $v^{\nu}_{i,\jump}$ absolutely continuous and locally finite one has
\begin{align*}
\bar \mu([0,T]\times\R)
&\leq
\int \varphi_{\alpha} d\bar \mu
=
-\int \varphi_{\alpha} \mu_{i,\jump}^{\nu,(\varepsilon_{0},\varepsilon_{1})} 
+ \OO\int \varphi_{\alpha} d\mu_{\nu}^{IC}+ \sum_{h} | \sigma'_{h}| \varphi_{\alpha}{(t_{h})}
\\
&\leq \iint((\varphi_{\alpha})_{t}+\bar\lambda_{i}^{\nu} (\phi_{\alpha})_{x}) d\big[v^{\nu}_{i,\jump}(t)\big]dt
+ \big[v^{\nu}_{i,\jump}(0)\big](\R)
+\OO \mu^{IC}_{\nu}([0,T+\alpha]\times\R)
\\
&= - \frac{1}{\alpha}\int_{T_{}}^{T_{}+\alpha}\big[v^{\nu}_{i,\jump}(t)\big](\R)dt
+ \big[v^{\nu}_{i,\jump}(0)\big](\R)
+\OO \mu_{\nu}^{IC}([0,T+\alpha]\times\R)
\end{align*}
By the uniform bound on $|v^{\nu}_{i,\jump}(t)|$, as $\alpha\downarrow0$ one obtains
\[
\bar \mu([0,T]\times\R)\leq
\Delta_{0,t}\TV(v^{\nu}_{i,\jump})+\OO \mu_{\nu}^{IC}([0,T+\alpha]\times\R)
\leq\OO\Upsilon^{\nu}(0).
\qedhere
\]
\end{proof}

%:
%: Bilanci su regioni caratteristiche
%:
\subsection{Balances on characteristic regions}
\label{Ss:balchar}
We already discussed the distinction between the part of $u_{x}^{\nu}$ which approximates the jump part of $u_{x}$ and what remains: now we denote shortly
\begin{align*}
v^{\nu}_{i,\jump}=v_{i}^{\nu}\llcorner_{\mathcal J^{\nu,i}_{(\varepsilon_{0},\varepsilon_{1})}} 
&&
\mu^\nu_{i,\jump}=\mu_{i,\jump}^{\nu,(\varepsilon_{0},\varepsilon_{1})} 
\\
v^{\nu}_{i,\cont}=v_{i}^{\nu}-v^{\nu}_{i,\jump}
&&
\mu^\nu_{i, \cont}=\mu_{i}^{\nu}-\mu^\nu_{i,\jump}
.
\end{align*}
In the proof of Lemmas~\ref{L:waveBalRadon},~\ref{L:jumpWaveBalRadon} we derived the balances
\[
v^{\nu}_{i}(t)-v^{\nu}_{i}(t^{-})=\mu^\nu_{i}(\{t\}\times\R),
\qquad
v^{\nu}_{i,\jump}(t)-v^{\nu}_{i,\jump}(t^{-})=\mu^\nu_{i,\jump}(\{t\}\times\R).
\]
We estimate in this section balances of these measures in regions bounded by generalized $\bar\imath $-th characteristics, based on the assumption of genuine non-linearity of the $\bar\imath $-th characteristic field.
A relevant measure will be the approximated interaction-cancellation-jump balance measure
\begin{equation}
\label{E:muICJinu}
\mu^{ICJ}_{\bar\imath,\nu} = \mu^{IC} + |\mu_{\bar\imath,\jump}^{\nu}| .
\end{equation}
{The measure $\mu^{ICJ}_{\bar\imath}$ is defined as a $w^{*}$-limit of the measures $\mu^{ICJ}_{\bar\imath,\nu}$.}

We recall that a \emph{(generalized) $i$-th characteristic} is an absolutely continuous curve $x(t)$ satisfying for a.e.~$t$ the differential inclusion
\[
\dot x(t)\in[\lambda_{i}(u(t, x(t)^{+})), \lambda_{i}(u(t, x(t)^{-}))] .
\]
Due to the presence of discontinuities, if $u$ is a semigroup solution of a system of conservation laws or a $\nu$-approximate front-tracking solution there are several backward and forward characteristics, and they may collapse. 
They are polygonal lines whose direction changes at interaction points or hitting discontinuities, which are their nodes.

One can select the \emph{minimal} $i$-th characteristics of the piecewise constant approximations starting at $t_{0}$ from any point $\bar x$, which means
\[
 x(t;t_{0},x_{0})
 =\min\bigg\{\begin{array}{c}x(t):\ x(t= t_{0})= x_{0},\ \dot x(t)\in[\lambda_{i}(u(t, x^{+})), \lambda_{i}(u(t, x^{-}))]
 \end{array} \bigg\}.
\]
Analogously, one can select the \emph{maximal} one.

For the rest of the paper we will select for simplicity minimal generalized characteristics. We stress however that the same statements hold if one selects any other family $y^{\nu}(t;t_{0},x)$ of generalized characteristics of $u^{\nu}$ which, as the minimal and maximal ones, have the semigroup property
\[
y^{\nu}(t_{0};t_{0}, x_{0})= x_{0};
\qquad
y^{\nu}(t+\tau;t_{0}, x_{0})= y^{\nu}(\tau; t, y^{\nu}(t,t_{0},x_{0})).
\]
\begin{notation}Given an interval $I=[a,b]$, we define the region $A^{t_{0},\tau}_{[a,b]}$ bounded by the minimal $i$-characteristics $a(t),b(t)$ starting at $t_{0}$ respectively from $a,b$, and its time-section $I(t)$, as
\begin{equation}
\label{E:At0tau}
A^{t_{0},\tau}_{[a,b]}= \big\{(t,x):\  t_{0}< t \leq t_{0}+\tau,\ a(t) \leq x \leq b(t) \big\},
\qquad
I(t)=[a(t),b(t)] .
\end{equation}
Given $J=I_{1}\cup \dots\cup I_{M}$ the union of any disjoint closed intervals $I_{1},\dots, I_{M}$, $M\in\N$, we denote by $A^{t_{0},\tau}_{J}$  the union of the regions $A^{t_{0},\tau}_{I_{h}}$.
Similarly, $J(t)=A^{t_{0},\tau}_{J}\cap \{t\}\times \R=I_{1}(t)\cup\dots \cup I_{M}(t)$.
\end{notation}
\begin{lemma}[Approximate wave balances]
\label{L:waveBalances}
Assume~\eqref{E:genNonl} and consider regions bounded by $\bar\imath$-characteristics as above.
Then for any $\tau,t_{0}\geq 0$ and $t=t_{0}+\tau_{0}$
\begin{align*}
v_{\bar\imath }^{\nu}(t) \big(J(t)\big)-v_{\bar\imath}^{\nu}(t_{0})\big(J\big)
&\leq 
\OO \big(\mu^{IC}_{\nu}\big(A^{t_{0},\tau}_{J}\big) +\varepsilon_{\nu}\big) ,
\\
v^{\nu}_{\bar\imath,\jump}(t) \big(J(t)\big)-v^{\nu}_{\bar\imath,\jump}(t_{0})\big(J\big)
&\leq 
\mu^\nu_{\bar\imath,\jump}\big(A^{t_{0},\tau}_{J}\big) ,
\\
v_{\bar\imath ,\cont}^{\nu}(t) \big(J(t)\big)-v_{\bar\imath ,\cont}^{\nu}(t_{0})\big(J\big)
&\leq 
\OO \big(\mu^{ICJ}_{\bar\imath,\nu}\big(A^{t_{0},\tau}_{J}\big) +\varepsilon_{\nu}\big) .
\end{align*}
\end{lemma}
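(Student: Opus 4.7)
The plan is to integrate the balance identities of Proposition~\ref{P:mu_estimate} over the region $A^{t_{0},\tau}_{J}$ by the divergence theorem, and to show that the lateral flux across the bounding minimal $\bar\imath$-characteristics is negligible. By disjointness of the regions $A^{t_{0},\tau}_{I_{h}}$ and additivity of the three sides of each inequality, it suffices to treat a single interval $J=I=[a,b]$, whose region $A:=A^{t_{0},\tau}_{I}$ is bounded below by $\{t_{0}\}\times I$, above by $\{t\}\times I(t)$, and laterally by the minimal $\bar\imath$-characteristics $a(s), b(s)$. The three inequalities will arise, respectively, from the balance of $v^{\nu}_{\bar\imath}$, that of $v^{\nu}_{\bar\imath,\jump}$, and their difference.

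Applying the divergence theorem to the identity $\mu^{\nu}_{\bar\imath} = \partial_{s}v^{\nu}_{\bar\imath}+\partial_{x}(\bar\lambda^{\nu}_{\bar\imath}v^{\nu}_{\bar\imath})$ on $A$ yields
\begin{equation*}
v^{\nu}_{\bar\imath}(t)(I(t)) - v^{\nu}_{\bar\imath}(t_{0})(I)
= \mu^{\nu}_{\bar\imath}(A)
+ \int_{t_{0}}^{t}(\bar\lambda^{\nu}_{\bar\imath}-\dot a)\, v^{\nu}_{\bar\imath}\big|_{x=a(s)}\,ds
- \int_{t_{0}}^{t}(\bar\lambda^{\nu}_{\bar\imath}-\dot b)\, v^{\nu}_{\bar\imath}\big|_{x=b(s)}\,ds.
\end{equation*}
In the front-tracking approximation $v^{\nu}_{\bar\imath}$ is purely atomic and supported on the $\bar\imath$-fronts of $u^{\nu}$, each of which propagates at speed $\bar\lambda^{\nu}_{\bar\imath}$. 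Hence the lateral integrals see only those instants at which a boundary characteristic meets a front: if the front is of the $\bar\imath$-family, the minimality of the characteristic forces $\dot b(s)=\bar\lambda^{\nu}_{\bar\imath}(s,b(s))$ locally, so the integrand vanishes; if the front belongs to some family $j\neq\bar\imath$, the corresponding atom of $v^{\nu}_{\bar\imath}$ vanishes by $\tilde l^{\nu}_{\bar\imath}\cdot\tilde r^{\nu}_{j}=0$. The only surviving contributions come from crossings with non-physical fronts (whose aggregate strength is bounded by $\varepsilon_{\nu}$) and from the reconfiguration of the characteristic at interaction points, already accounted for inside $\mu^{\nu}_{\bar\imath}$. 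Combined with the bound $|\mu^{\nu}_{\bar\imath}|\le \OO\mu^{IC}_{\nu}+\OO\varepsilon_{\nu}$ from Lemma~\ref{L:waveBalRadon}, this delivers the first inequality.

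The second inequality follows by applying the identical scheme to $v^{\nu}_{\bar\imath,\jump}$ and $\mu^{\nu}_{\bar\imath,\jump}$: since $v^{\nu}_{\bar\imath,\jump}$ is supported on the maximal $(\varepsilon_{0},\varepsilon_{1})$-shock fronts of the $\bar\imath$-family, which again propagate at $\bar\lambda^{\nu}_{\bar\imath}$, the same lateral flux analysis reduces the balance to the equality
\begin{equation*}
v^{\nu}_{\bar\imath,\jump}(t)(I(t)) - v^{\nu}_{\bar\imath,\jump}(t_{0})(I) = \mu^{\nu}_{\bar\imath,\jump}(A),
\end{equation*}
which is in particular the desired upper bound. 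The third inequality is then obtained by subtraction: writing $v^{\nu}_{\bar\imath,\cont}=v^{\nu}_{\bar\imath}-v^{\nu}_{\bar\imath,\jump}$ and using the first inequality together with the equality above,
\begin{equation*}
v^{\nu}_{\bar\imath,\cont}(t)(I(t)) - v^{\nu}_{\bar\imath,\cont}(t_{0})(I)
\le \OO\bigl(\mu^{IC}_{\nu}(A)+\varepsilon_{\nu}\bigr) - \mu^{\nu}_{\bar\imath,\jump}(A)
\le \OO\bigl(\mu^{IC}_{\nu}(A)+|\mu^{\nu}_{\bar\imath,\jump}|(A)+\varepsilon_{\nu}\bigr),
\end{equation*}
and the definition $\mu^{ICJ}_{\bar\imath,\nu} = \mu^{IC}_{\nu}+|\mu^{\nu}_{\bar\imath,\jump}|$ from~\eqref{E:muICJinu} immediately closes the estimate.

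The main obstacle I foresee is the careful sign analysis of the lateral flux: while the vanishing on segments where $\bar\lambda^{\nu}_{\bar\imath}=\dot b$ is essentially automatic, one must verify rigorously that every transverse crossing by a wave of another family or by a non-physical front contributes a quantity of the correct sign whose total is controlled by $\OO(\mu^{I}_{\nu}(A)+\varepsilon_{\nu})$. This geometric bookkeeping is the direct refinement of the flux analysis that supports the decay of positive waves in Theorem~10.3 of~\cite{Bressan}, whose structure I expect to follow very closely.
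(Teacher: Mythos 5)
Your overall framework — a balance of $\mu^{\nu}_{\bar\imath}$ over $A^{t_0,\tau}_J$ with a boundary flux along the bounding characteristics — is indeed the paper's approach. However, the claim that the lateral flux essentially vanishes is wrong, and this is not a side issue to be cleaned up later: it is the content of the lemma.

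You assert that on an $\bar\imath$-front the minimality of the characteristic forces $\dot b = \bar\lambda^{\nu}_{\bar\imath}$, so the lateral integrand vanishes. That is true for an $\bar\imath$-\emph{shock} (a minimal characteristic sticks to it), but it is false for an $\bar\imath$-\emph{rarefaction} front: in the front-tracking scheme the rarefaction jump travels at the averaged speed $\lambda_{\bar\imath}(u^+,u^-)$, which under genuine nonlinearity is strictly larger than $\lambda_{\bar\imath}(u^-)=\dot b$, so rarefaction fronts steadily cross the boundary and carry positive mass of $v^\nu_{\bar\imath}$ out of the region. The paper encodes this by introducing an explicit boundary flux $\Phi^{\nu,\out}_{\bar\imath,[a,b]}$ and listing all the cases that may occur at a node on $\Graph(a)\cup\Graph(b)$; the point is precisely that (i) most contributions are $\leq 0$ because genuine nonlinearity forces $\bar\imath$-shocks to be negative and exiting rarefactions to subtract, while (ii) the only positive contributions arise when a rarefaction from \emph{outside} interacts with a shock at the boundary, in which case a cancellation occurs and the flux is bounded by $\mu^{IC}_{\nu}$ at that point. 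You flag the sign analysis as an "obstacle" but misattribute it to other-family transverse crossings (those contribute nothing to $v^{\nu}_{\bar\imath}$ and are handled inside $\mu^{\nu}_{\bar\imath}$ via interaction estimates) and you write $\mu^{I}_{\nu}$ where the lemma and the argument require the \emph{cancellation} part of $\mu^{IC}_{\nu}$.

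The exact equality you claim for the jump part, $v^{\nu}_{\bar\imath,\jump}(t)(I(t)) - v^{\nu}_{\bar\imath,\jump}(t_{0})(I) = \mu^{\nu}_{\bar\imath,\jump}(A)$, is also false: an $\bar\imath$-shock belonging to $\mathcal J^{\nu,\bar\imath}_{(\varepsilon_0,\varepsilon_1)}$ can enter the region through the boundary characteristic, giving a nonzero jump boundary flux $\Phi^{\nu,\out,\jump}$. The paper shows this flux is $\leq 0$ (again by genuine nonlinearity, since only negative shocks are involved), so the one-sided inequality survives — but the equality does not. This matters downstream: your third bound is obtained by subtracting the claimed equality from the first inequality, and once the equality fails the arithmetic $\Delta v \leq a$, $\Delta v_{\jump} \leq b$ does not deliver $\Delta v - \Delta v_{\jump} \leq a-b$. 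The paper instead subtracts the two exact balance identities and controls $\Phi^{\nu,\out}_{\bar\imath}-\Phi^{\nu,\out,\jump}_{\bar\imath}\leq[\Phi^{\nu,\out}_{\bar\imath}]^{+}\leq\OO\mu^{IC}_{\nu}$ using that the jump flux picks out a sub-collection of the negative summands in the full flux. Finally, your proposal never mentions how the single-interval balance extends to a finite union of intervals whose characteristics may collapse; the paper devotes a nontrivial step to showing that the merging point is counted consistently in $\hat\mu^{\nu}_{\bar\imath}$ and in the fluxes, so the regions may simply be glued.
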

The lemma shows that the variation of both the `continuous' and `jump' part of the wave measures on characteristic regions is controlled by finite, purely atomic measures, concentrated on interaction points. While for points in the interior of the region a perfect balance holds, with the suitable measures, in the proof one defines `fluxes through the boundaries' in order to take into account the variation due to waves that enter$\backslash$exit the region.
Then the fluxes are controlled by the assumption of genuine non-linearity and the interaction-cancellation measure, by the choice of the speed of the rarefaction as a mean of the values $\lambda(u^{\pm})$.
\begin{proof}
Fix for simplicity of notations $t_{0}=0$, $A^{\tau}_{-}=A^{0,\tau}_{-}$.
Given a closed interval $[a,b]$ we briefly denote
\begin{subequations}
\label{EG:balClosedInt}
\begin{equation}
v_{\bar \imath}^{\nu}(t):= v_{\bar \imath}^{\nu}(t)\big([a(t),b(t)]\big)
.
\end{equation}
We prove before the first balance, which is most of the work, and then we explain the others.

\paragr{0) Contribution of non-physical waves.}
The total strength of non-physical fronts at each finite time is controlled by $\varepsilon_{\nu}$, as well as the mass $|\rho^{\nu}|$ of $\mu ^{\nu}$ due to interactions involving non-physical waves (Lemma~\ref{L:waveBalRadon}).
Since thus non-physical interactions and waves are not relevant, they may be neglected by removing from both $\mu^{\nu}$, $v^{\nu}$ the relative terms: denote by $\hat \mu^{\nu} = \mu ^{\nu}-\rho^{\nu} $, $\hat v^{\nu}=v^{\nu}\llcorner_{\{\text{physical waves}\}}$ this simplified measures.
It suffices to obtain the claim for these measures, then the other follows.

\paragr{1) Instantaneous estimate.}
The region $A^{\tau}_{[a,b]}$ is bounded by generalized $\bar\imath$-characteristics: then the function $v_{\bar \imath}^{\nu}(t)$ may vary only at times $\hat t$ where an interaction takes place in $[a(\hat t), b(\hat t)]$, an $i$-rarefaction wave leaves the region $A^{\tau}_{[a,b]}$ or an $i$-shock enters.
The following cases may occur: if physical waves in $A^{\tau}_{[a,b]}$ change at $(\hat t,\hat x)$, then one can define $\Phi_{\bar \imath,[a,b]}^{\nu,\out}$ by
\[
v^{\nu}_{\bar \imath}(\hat t^{+})-v^{\nu}_{\bar \imath}(\hat t^{-})= \mu_{\bar \imath}^\nu\big(\{(\hat t,\hat x)\} \big) +\Phi_{\bar \imath,[a,b]}^{\nu,\out}(\{(\hat t,\hat x)\}).
\]
Formula~\eqref{EG:muinu} and the inspection of all the possible cases, that we perform without taking into account that $a(t),b(t)$ are minimal, yield that $\Phi_{\bar \imath,[a,b]}^{\nu,\out}(\{(\hat t,\hat x)\})$ is different from $0$ only if $\hat x$ coincides with $a(\hat t)$ or $b(\hat t)$.
The value can be computed exactly. The most relevant cases are described below, and they correspond to interaction among physical waves of the same family.
The others are analogous.
For example, we do not consider when only part of a rarefaction exits the region: one can see that the value of $\Phi_{\bar \imath,[a,b]}^{\nu,\out}$ is controlled by above and below by two cases we consider (the whole rarefaction exists, or no part of it exits).
In these cases, that we illustrate in Figure~\ref{fig:interazioni}, $\Phi_{\bar \imath,[a,b]}^{\nu,\out}$ is precisely equal to
\newcommand{\larghezzaimmagine}{.15 \linewidth} % \svgwidth
\newcommand{\spazioimmagine}{\hspace{.03\linewidth}}
\begin{figure}[ht!]
\centering
   \def\svgwidth{\larghezzaimmagine} % sets the image width, this is optional
  \ifx\svgwidth\undefined
    \setlength{\unitlength}{42.08501606pt}
  \else
    \setlength{\unitlength}{\svgwidth}
  \fi
  \global\let\svgwidth\undefined
  \makeatother
  \begin{picture}(1,1.02078347)%
    \put(0,0){\includegraphics[width=\unitlength]{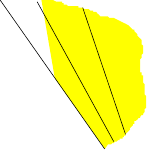}}%
    \put(0.12255952,0.75761011){\color[rgb]{0,0,0}\makebox(0,0)[lb]{\smash{$\sigma$}}}%
    \put(0.32409623,0.47545871){\color[rgb]{0,0,0}\makebox(0,0)[lb]{\smash{$(\hat t,\hat x)$}}}%
  \end{picture}%
\spazioimmagine
   \def\svgwidth{\larghezzaimmagine} % sets the image width, this is optional
  \ifx\svgwidth\undefined
    \setlength{\unitlength}{35.04536801pt}
  \else
    \setlength{\unitlength}{\svgwidth}
  \fi
  \global\let\svgwidth\undefined
  \makeatother
  \begin{picture}(1,1.2287654)%
    \put(0,0){\includegraphics[width=\unitlength]{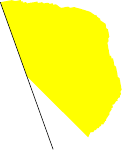}}%
    \put(0.12617551,0.58013578){\color[rgb]{0,0,0}\makebox(0,0)[lb]{\smash{$(\hat t,\hat x)$}}}%
    \put(0.21146593,0.18104099){\color[rgb]{0,0,0}\makebox(0,0)[lb]{\smash{$\sigma'$}}}%
  \end{picture}%
\spazioimmagine
   \def\svgwidth{\larghezzaimmagine} % sets the image width, this is optional
  \ifx\svgwidth\undefined
    \setlength{\unitlength}{36.01071429pt}
  \else
    \setlength{\unitlength}{\svgwidth}
  \fi
  \global\let\svgwidth\undefined
  \makeatother
  \begin{picture}(1,1.25895071)%
    \put(0,0){\includegraphics[width=\unitlength]{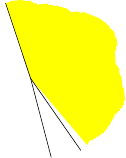}}%
    \put(0.08430031,0.76385996){\color[rgb]{0,0,0}\makebox(0,0)[lb]{\smash{$(\hat t,\hat x)$}}}%
    \put(0.15,0.17990677){\color[rgb]{0,0,0}\makebox(0,0)[lb]{\smash{$\sigma'$}}}%
    \put(0.45403154,0.17990677){\color[rgb]{0,0,0}\makebox(0,0)[lb]{\smash{$\sigma''$}}}%
  \end{picture}%
\spazioimmagine
   \def\svgwidth{\larghezzaimmagine} % sets the image width, this is optional
  \ifx\svgwidth\undefined
    \setlength{\unitlength}{35.9107438pt}
  \else
    \setlength{\unitlength}{\svgwidth}
  \fi
  \global\let\svgwidth\undefined
  \makeatother
  \begin{picture}(1,1.31462303)%
    \put(0,0){\includegraphics[width=\unitlength]{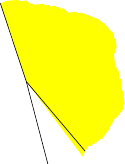}}%
    \put(0.06866197,0.82249379){\color[rgb]{0,0,0}\makebox(0,0)[lb]{\smash{$(\hat t,\hat x)$}}}%
    \put(0.14322701,0.17626346){\color[rgb]{0,0,0}\makebox(0,0)[lb]{\smash{$\sigma'$}}}%
  \end{picture}%
\spazioimmagine
   \def\svgwidth{\larghezzaimmagine} % sets the image width, this is optional
  \ifx\svgwidth\undefined
    \setlength{\unitlength}{416.31456103pt}
  \else
    \setlength{\unitlength}{\svgwidth}
  \fi
  \global\let\svgwidth\undefined
  \makeatother
  \begin{picture}(1,1.1308811)%
    \put(0,0){\includegraphics[width=\unitlength]{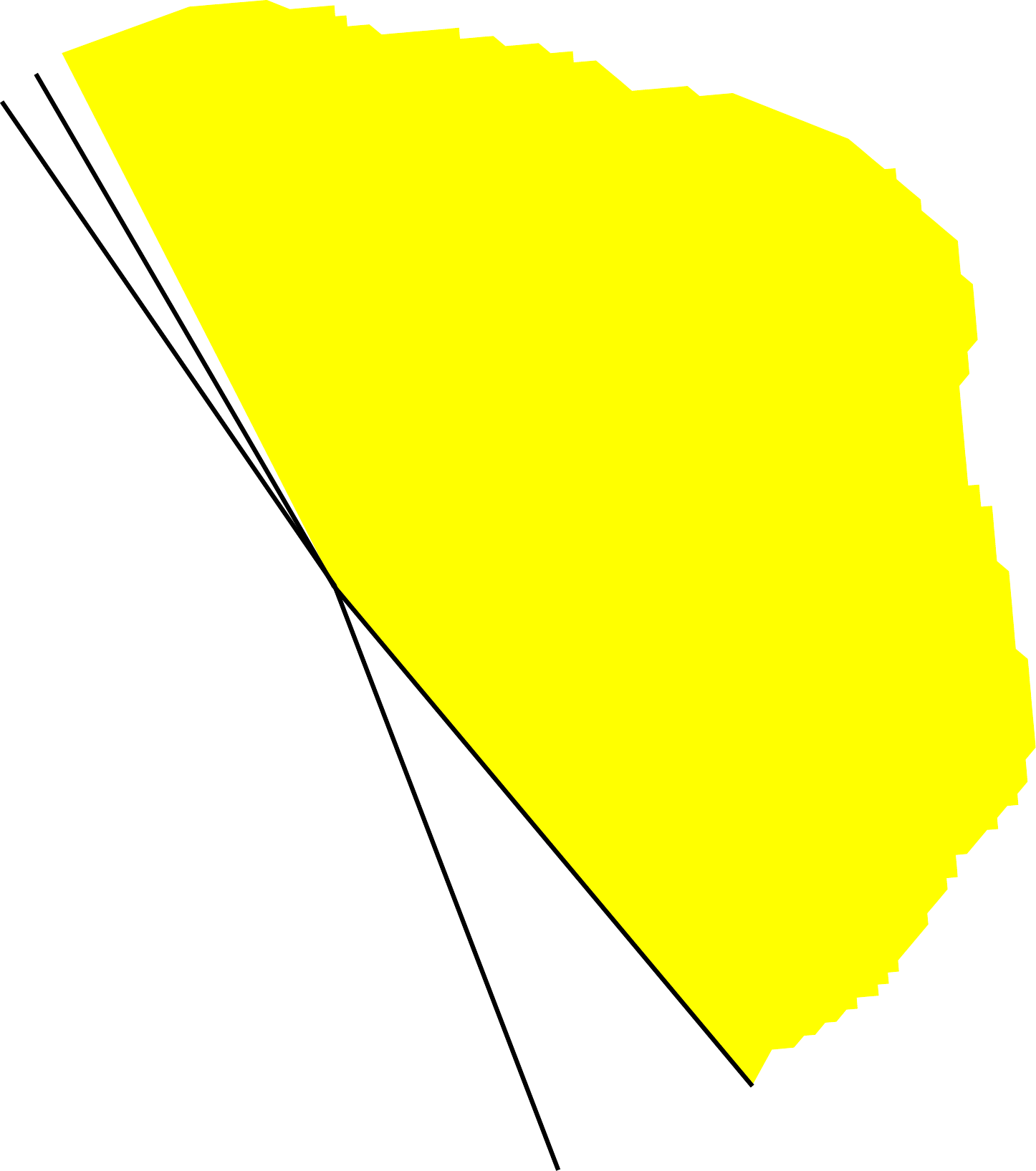}}%
    \put(0.18792679,0.70314584){\color[rgb]{0,0,0}\makebox(0,0)[lb]{\smash{$(\hat t,\hat x)$}}}%
    \put(0.02797691,0.9060545){\color[rgb]{0,0,0}\makebox(0,0)[lb]{\smash{$\sigma$}}}%
    \put(0.33441625,0.15241419){\color[rgb]{0,0,0}\makebox(0,0)[lb]{\smash{$\sigma'$}}}%
  \end{picture}%
 \\
 \vspace{.06\linewidth}
   \def\svgwidth{\larghezzaimmagine} % sets the image width, this is optional
  \ifx\svgwidth\undefined
    \setlength{\unitlength}{401.95177221pt}
  \else
    \setlength{\unitlength}{\svgwidth}
  \fi
  \global\let\svgwidth\undefined
  \makeatother
  \begin{picture}(1,1.17408)%
    \put(0,0){\includegraphics[width=\unitlength]{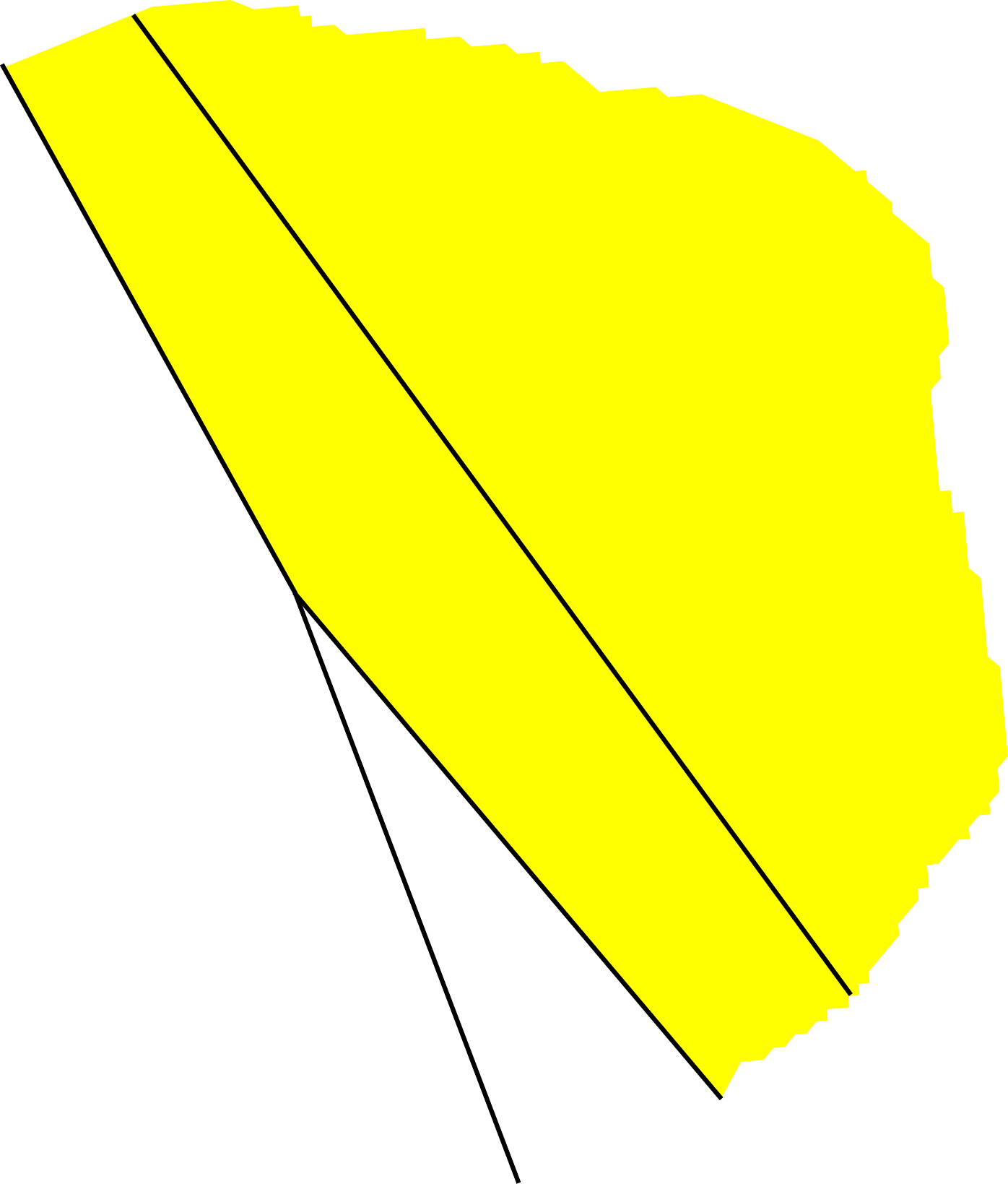}}%
    \put(0.130037,0.73999107){\color[rgb]{0,0,0}\makebox(0,0)[lb]{\smash{$(\hat t,\hat x)$}}}%
    \put(0.33280027,0.15550328){\color[rgb]{0,0,0}\makebox(0,0)[lb]{\smash{$\sigma'$}}}%
  \end{picture}%
\spazioimmagine
   \def\svgwidth{\larghezzaimmagine} % sets the image width, this is optional
  \ifx\svgwidth\undefined
    \setlength{\unitlength}{387.67479301pt}
  \else
    \setlength{\unitlength}{\svgwidth}
  \fi
  \global\let\svgwidth\undefined
  \makeatother
  \begin{picture}(1,1.21123979)%
    \put(0,0){\includegraphics[width=\unitlength]{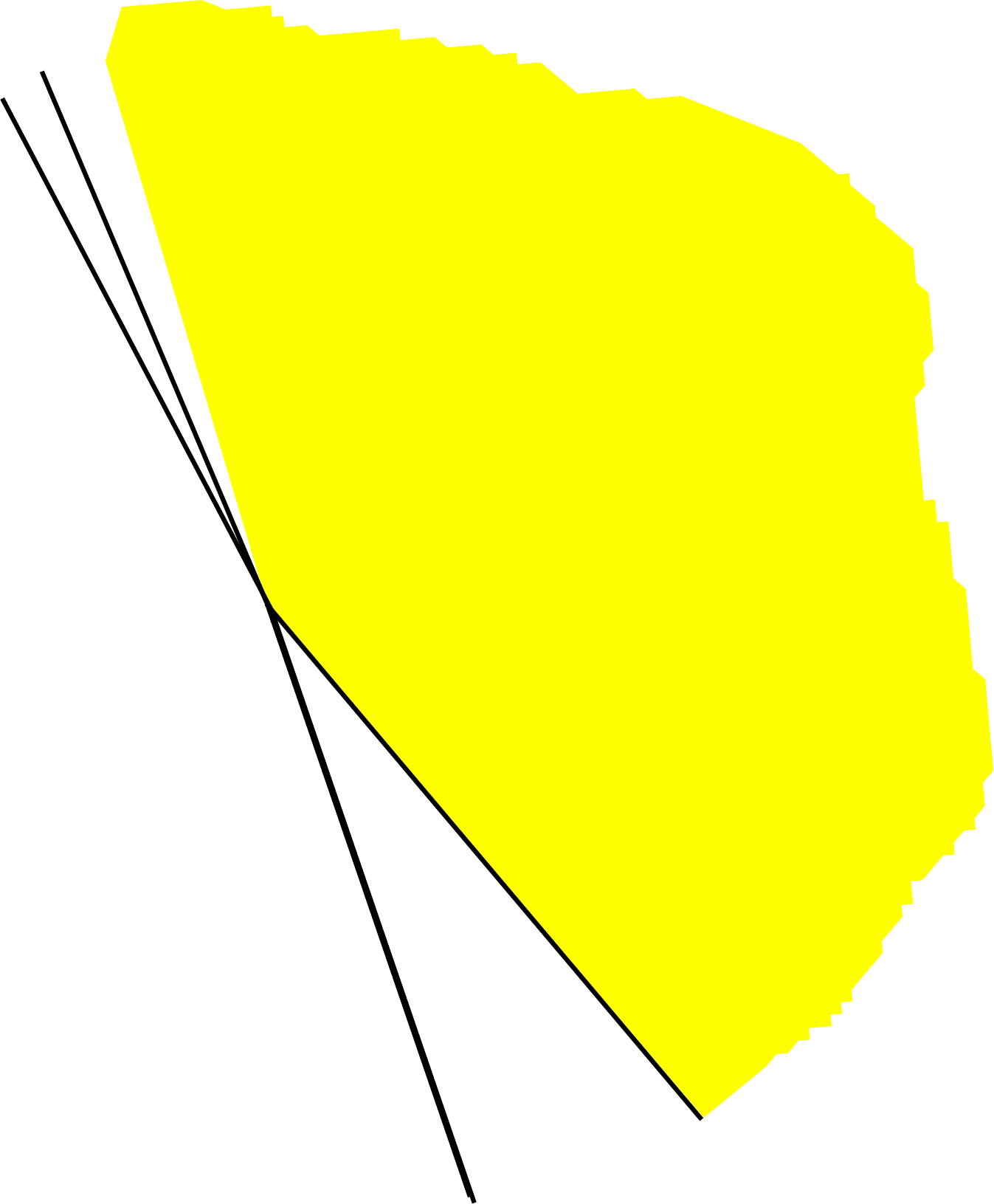}}%
    \put(0.00017299,0.98657857){\color[rgb]{0,0,0}\makebox(0,0)[lb]{\smash{$\sigma$}}}%
    \put(0.10904373,0.75207593){\color[rgb]{0,0,0}\makebox(0,0)[lb]{\smash{$(\hat t,\hat x)$}}}%
    \put(0.27481019,0.16681884){\color[rgb]{0,0,0}\makebox(0,0)[lb]{\smash{$\sigma'$}}}%
  \end{picture}%
\spazioimmagine
   \def\svgwidth{\larghezzaimmagine} % sets the image width, this is optional
  \ifx\svgwidth\undefined
    \setlength{\unitlength}{380.3799935pt}
  \else
    \setlength{\unitlength}{\svgwidth}
  \fi
  \global\let\svgwidth\undefined
  \makeatother
  \begin{picture}(1,1.2432072)%
    \put(0,0){\includegraphics[width=\unitlength]{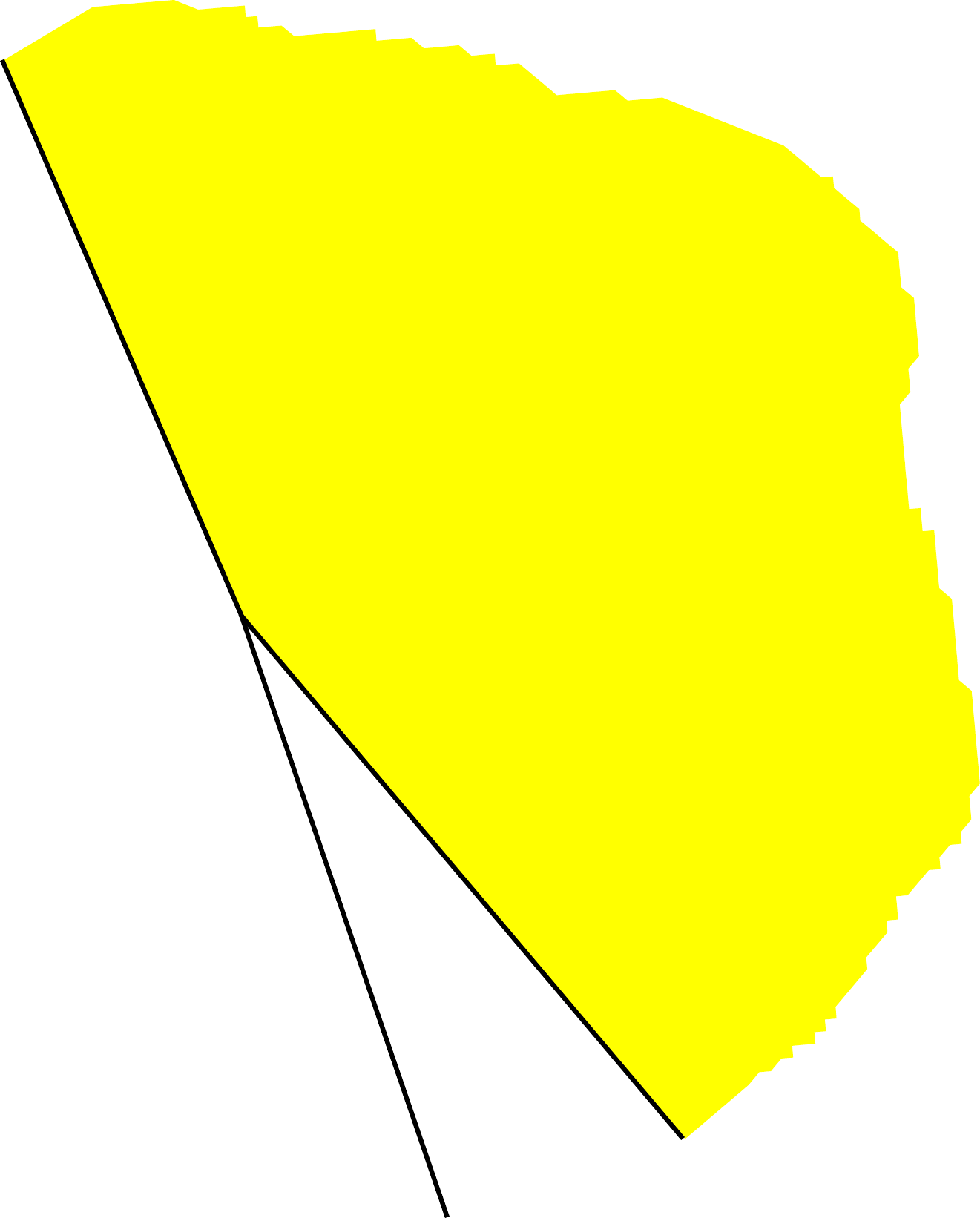}}%
    \put(0.0916563,0.76951334){\color[rgb]{0,0,0}\makebox(0,0)[lb]{\smash{$(\hat t,\hat x)$}}}%
    \put(0.32120106,0.1592447){\color[rgb]{0,0,0}\makebox(0,0)[lb]{\smash{$\sigma'$}}}%
  \end{picture}%
\spazioimmagine
   \def\svgwidth{\larghezzaimmagine} % sets the image width, this is optional
  \ifx\svgwidth\undefined
    \setlength{\unitlength}{380.3799935pt}
  \else
    \setlength{\unitlength}{\svgwidth}
  \fi
  \global\let\svgwidth\undefined
  \makeatother
  \begin{picture}(1,1.2432072)%
    \put(0,0){\includegraphics[width=\unitlength]{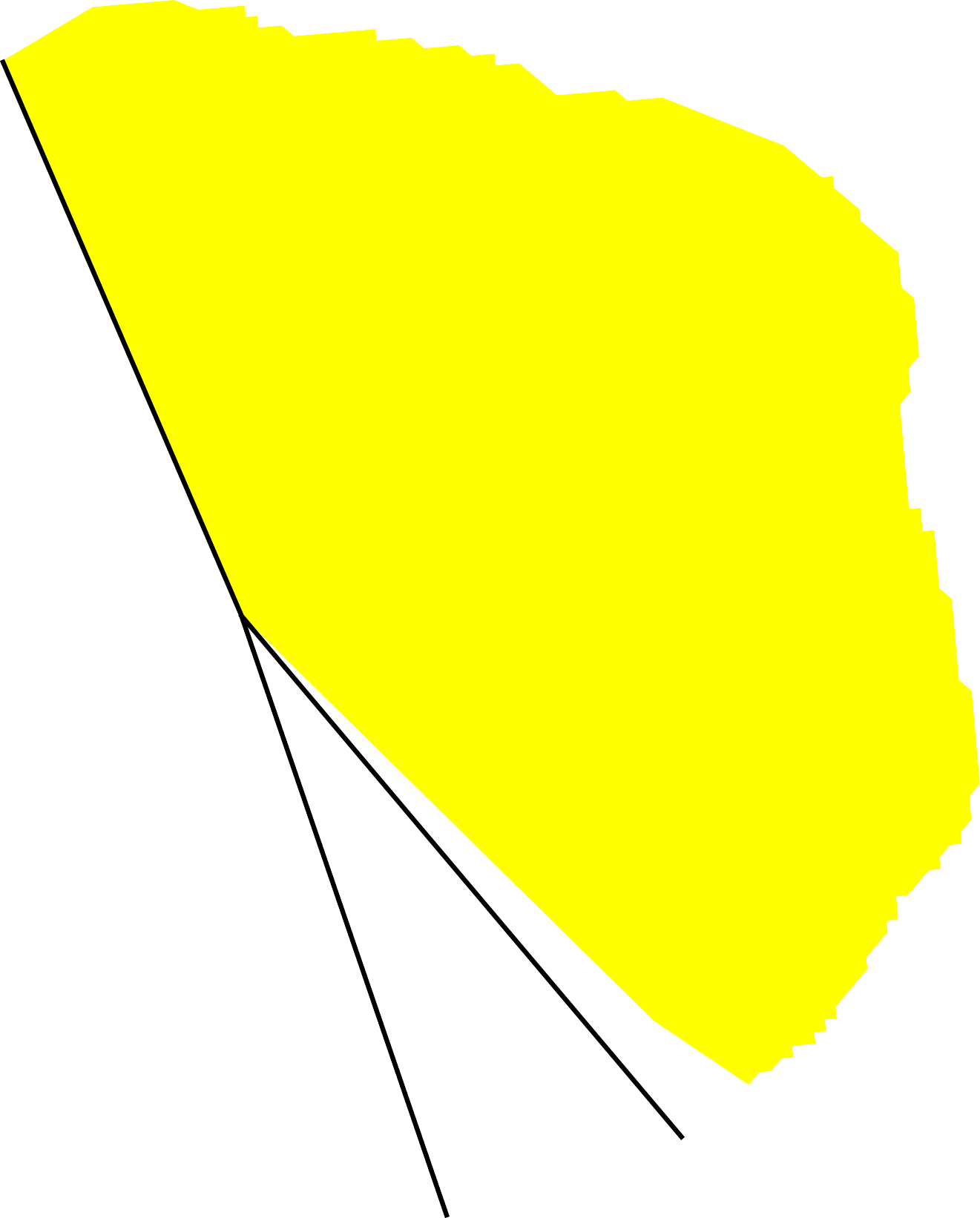}}%
    \put(0.0916563,0.76951334){\color[rgb]{0,0,0}\makebox(0,0)[lb]{\smash{$(\hat t,\hat x)$}}}%
    \put(0.32120106,0.1592447){\color[rgb]{0,0,0}\makebox(0,0)[lb]{\smash{$\sigma'$}}}%
    \put(0.55264945,0.16352784){\color[rgb]{0,0,0}\makebox(0,0)[lb]{\smash{$\sigma''$}}}%
  \end{picture}%
\spazioimmagine
   \def\svgwidth{\larghezzaimmagine} % sets the image width, this is optional
  \ifx\svgwidth\undefined
    \setlength{\unitlength}{387.67479301pt}
  \else
    \setlength{\unitlength}{\svgwidth}
  \fi
  \global\let\svgwidth\undefined
  \makeatother
  \begin{picture}(1,1.21123979)%
    \put(0,0){\includegraphics[width=\unitlength]{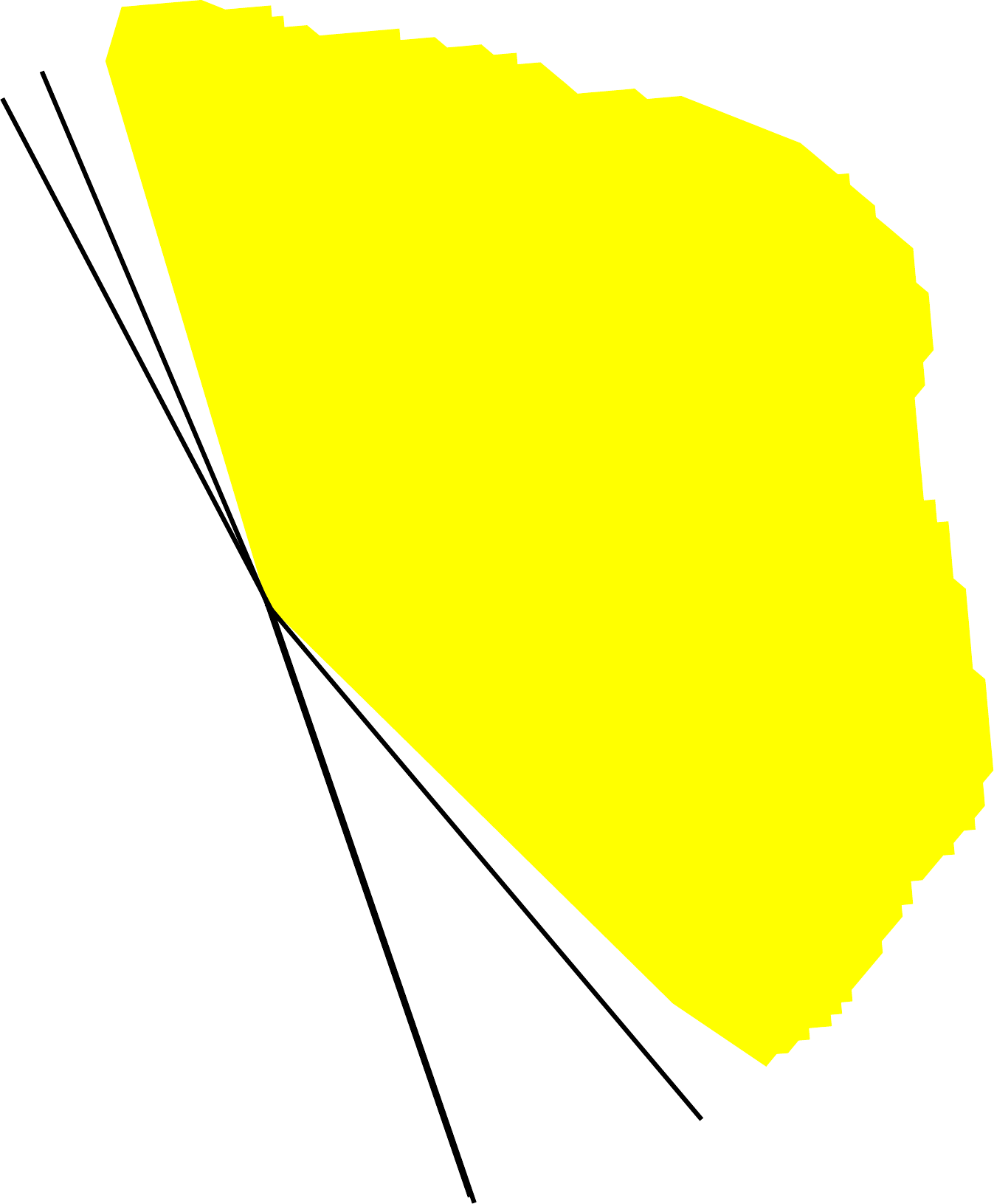}}%
    \put(0.00017299,0.98657857){\color[rgb]{0,0,0}\makebox(0,0)[lb]{\smash{$\sigma$}}}%
    \put(0.10904373,0.75207593){\color[rgb]{0,0,0}\makebox(0,0)[lb]{\smash{$(\hat t,\hat x)$}}}%
    \put(0.27481019,0.16681884){\color[rgb]{0,0,0}\makebox(0,0)[lb]{\smash{$\sigma'$}}}%
    \put(0.5620567,0.16681884){\color[rgb]{0,0,0}\makebox(0,0)[lb]{\smash{$\sigma''$}}}%
  \end{picture}%
\caption{Illustration of the flux function $\Phi_{\bar \imath,[a,b]}^{\nu,\out}(\{(\hat t,\hat x)\})$ of Lemma~\ref{L:waveBalances}}
\label{fig:interazioni}
\end{figure}
\begin{align*}
&-\sigma; \ \sigma\geq 0 &&
\text{\parbox{.6\linewidth}{If an $\bar \imath$-rarefaction wave exits.}}
\\
&\sigma'; \ \sigma'\leq 0& &
\text{\parbox{.6\linewidth}{If an $\bar \imath$-shock enters.}}
\\
&\sigma'+\sigma''; \ \sigma',\sigma''\leq 0&&
\text{\parbox{.6\linewidth}{If two $\bar \imath$-shocks out of $A^{\tau}_{[a,b]}$ interact.}}
\\
&\sigma';\ \sigma'\leq0& &
\text{\parbox{.6\linewidth}{If two $\bar \imath$-shocks, one in one out of $A^{\tau}_{[a,b]}$, interact.}}
\\
&-\sigma+\sigma';\ \sigma'\leq0,\ \sigma\geq 0& &
\text{\parbox{.6\linewidth}{If an $\bar \imath$-shock out of $A^{\tau}_{[a,b]}$ interacts with an $\bar \imath$-rarefaction wave, and a $\bar \imath$-rarefaction wave exits the region $A^{\tau}_{[a,b]}$.}}
\\
&\sigma';\ \sigma'\leq0& &
\text{\parbox{.6\linewidth}{If an $\bar \imath$-shock out of $A^{\tau}_{[a,b]}$ interacts with an $\bar \imath$-rarefaction wave, and no $\bar \imath$-rarefaction wave exits the region $A^{\tau}_{[a,b]}$.}}
\\
&-\sigma+\sigma';\ \sigma,\sigma'\geq0&&
\text{\parbox{.6\linewidth}{If an $\bar \imath$-rarefaction wave out of $A^{\tau}_{[a,b]}$ interacts with a $\bar \imath$-shock in $A^{\tau}_{[a,b]}$, and a $\bar \imath$-rarefaction wave exits the region $A^{\tau}_{[a,b]}$.}}
\\
&{\sigma'};\ \sigma'\geq0&&
\text{\parbox{.6\linewidth}{If an $\bar \imath$-rarefaction wave out of $A^{\tau}_{[a,b]}$ interacts with a $\bar \imath$-shock in $A^{\tau}_{[a,b]}$, and no $\bar \imath$-rarefaction wave exits the region $A^{\tau}_{[a,b]}$.}}
\\
&{\sigma' +\sigma''};\ \sigma'\geq0, \sigma'' \leq 0&&
\text{\parbox{.6\linewidth}{If an $\bar \imath$-rarefaction wave and $\bar \imath$-shock both out of $A^{\tau}_{[a,b]}$ interact, and no $\bar \imath$-rarefaction wave exits the region $A^{\tau}_{[a,b]}$.}}
\\
&-\sigma+\sigma'+\sigma'';\ \sigma,\sigma'\geq0,\sigma'' \leq 0&&
\text{\parbox{.6\linewidth}{If an $\bar \imath$-rarefaction wave and $\bar \imath$-shock both out of $A^{\tau}_{[a,b]}$ interact, and a $\bar \imath$-rarefaction wave exits the region $A^{\tau}_{[a,b]}$.}}
\end{align*}
The explanation is that when a physical interaction takes place at the boundary one has to balance the part of the waves that are out of the region we are looking at.
\emph{The assumption of genuine-nonlinearity of the $\bar \imath $-th characteristic field is what determines the sign of the big discontinuities, the $\bar\imath$-shocks}.
Considering that, the contribution of $\Phi^{\nu,\out}_{\bar \imath,[a,b]}$ may be positive only in the last four cases, when an $\bar \imath$-rarefaction wave comes from the outer region and hits a shock: in this case we now show that cancellation occurs and $\Phi^{\nu,\out}_{\bar \imath,[a,b]}$ is controlled by the amount of cancellation in $\mu_{\bar \imath}^{IC}$ at that point.
Indeed, suppose that an $\bar\imath$-rarefaction $\ell'$ hits an $\bar\imath$-shock $\ell''$: the value $u_{m}$ of $u^{\nu}$ between the two fronts should be the same, while the incidence condition implies $\dot\ell' > \dot\ell''$.
Moreover the speeds $\dot\ell' , \dot\ell''$ can be estimated by the intermediate value $u_{m}$ and the strength of the waves: one has
\[
\lambda_{\bar\imath}(u_{m})- \sigma_{\bar\imath}'/4
> 
\dot\ell'
>
\dot\ell''> \lambda_{\bar\imath}(u_{m})+3\sigma_{\bar\imath}''/4
\qquad
\Rightarrow
\qquad
-3\sigma_{\bar\imath}''\geq\sigma_{\bar\imath}'.
\]
This says that if a cancellation occurs, then the shock should be of the same size as the rarefaction.
Therefore the rarefaction even when not cancelled is controlled by the amount of cancellation.
Then
\[
\Phi^{\nu,\out}_{\bar \imath,[a,b]}(\{(\hat t,\hat x)\})\leq 3 \mu_{\bar \imath}^{IC}(\{(\hat t,\hat x)\}).
\] 
More formally, the `flux' $\Phi^{\nu,\out}_{\bar \imath,[a,b]}$ could be defined at once considering the exterior trace of $u^{\nu}$ on $A^{\tau}_{[a,b]}$.

\paragr{2) Single interval estimate.}
By the choice of $A^{\tau}_{[a,b]}$, which contains $I(\tau)$ but not $I(0)$, there is no flux concerning waves exiting at $\tau$ and no one concerning waves entering at $0$---even if an interaction takes place at $\tau$ or at $0$---because $v_{\bar \imath}(t)$ is continuous from the right. Indeed interactions at $t=0$ are not taken into account by $\mu^{\nu}_{\bar\imath}$ on ${{A^{\tau}_{[a,b]}}}$.
We recall that now we temporary neglect non-physical waves.

Piecing together the various physical interactions one obtains
\begin{equation}
\label{E:balclosedint}
\hat v^{\nu}_{\bar\imath}(\tau)-\hat v^{\nu}_{\bar\imath}(0)= \hat\mu^\nu_{\bar\imath}\big(A^{\tau}_{[a,b]}\big)+  \Phi^{\nu,\out}_{\bar \imath,[a,b]}(\Graph(a)\cup\Graph(b))
\qquad
 \Phi^{\nu,\out}_{\bar \imath,[a,b]}(B)\leq \OO\mu^{IC}\big(B\big),
\end{equation}
\end{subequations}
where $B\subset\Graph(a)\cup\Graph(b)$.
This formula holds also in the case $a=b$.
When removing the hat the same equation holds with an additional term uniformly controlled by $\varepsilon_{\nu}$, however we prefer to take non-physical wave into account after having obtained the balance for countably many intervals.

\paragr{3) Estimate on more intervals.}
Consider now two disjoint closed intervals $I_{0}=[x_{0}, x_{1}]$, $I_{1}=[x_{2}, x_{3}]$ and denote the relative selected $\bar\imath$-characteristics starting out from $x_{h}$ by $x_{h}(t)$, $I_{h}(t)=[x_{2h}(t), x_{2h+1}(t)]$.
Notice that the same equation as in~\eqref{E:balclosedint} holds immediately for $A^{t_{0},\tau}_{I_{1}\cup I_{2}}$ on time intervals where $x_{1}(t_{0}+\tau)<x_{2}(t_{0}+\tau)$.
We consider thus the case $x_{1}(t)=x_{2}(t)$ at time $\hat t\leq t_{0}+\tau$ and then, by the semigroup property, up to time $t_{0}+\tau$.

\begin{figure}[ht!] 
   \centering
   \def\svgwidth{.6\columnwidth} % sets the image width, this is optional
  \ifx\svgwidth\undefined
    \setlength{\unitlength}{776.052515pt}
  \else
    \setlength{\unitlength}{\svgwidth}
  \fi
  \global\let\svgwidth\undefined
  \makeatother
  \begin{picture}(1,0.50639937)%
    \put(0,0){\includegraphics[width=\unitlength]{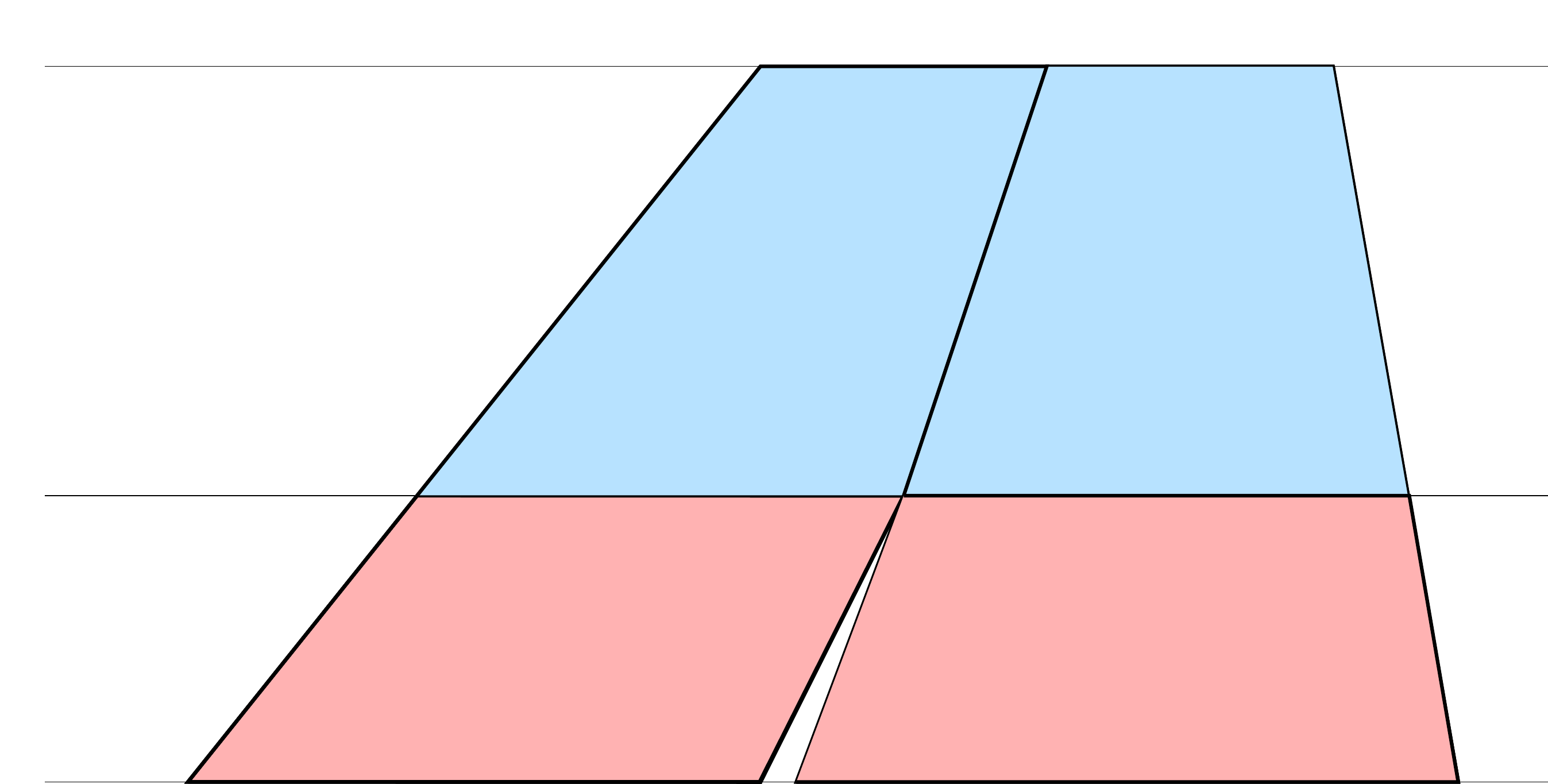}}%
    \put(0.94798139,0.03013226){\color[rgb]{0,0,0}\makebox(0,0)[lb]{\smash{$x_{3}$}}}%
    \put(0.58,0.48674007){\color[rgb]{0,0,0}\makebox(0,0)[lb]{\smash{$x_{1}^{t_{0}+\tau}=x_{2}^{t_{0}+\tau}$}}}%
    \put(0.0000867,0.03591211){\color[rgb]{0,0,0}\makebox(0,0)[lb]{\smash{$t_{0}$}}}%
    \put(0.0000867,0.22086717){\color[rgb]{0,0,0}\makebox(0,0)[lb]{\smash{$\hat t$}}}%
    \put(0.0000867,0.49829976){\color[rgb]{0,0,0}\makebox(0,0)[lb]{\smash{$t_{0}+\tau$}}}%
    \put(0.41623559,0.48674007){\color[rgb]{0,0,0}\makebox(0,0)[lb]{\smash{$x_{0}^{t_{0}+\tau}$}}}%
    \put(0.84972401,0.48674007){\color[rgb]{0,0,0}\makebox(0,0)[lb]{\smash{$x_{3}^{t_{0}+\tau}$}}}%
    \put(0.09834407,0.03013226){\color[rgb]{0,0,0}\makebox(0,0)[lb]{\smash{$x_{0}$}}}%
    \put(0.54339219,0.03013226){\color[rgb]{0,0,0}\makebox(0,0)[lb]{\smash{$x_{2}$}}}%
    \put(0.32005895,0.12399696){\color[rgb]{0,0,0}\makebox(0,0)[lb]{\smash{$A_{I_{0}}^{t_{0},\hat t - t_{0}}$}}}%
    \put(0.70183703,0.12399696){\color[rgb]{0,0,0}\makebox(0,0)[lb]{\smash{$A_{I_{1}}^{t_{0},\hat t - t_{0}}$}}}%
    \put(0.58485295,0.32159061){\color[rgb]{0,0,0}\makebox(0,0)[lb]{\smash{$A_{[x_{0}(\hat t),x_{3}(\hat t)]}^{\hat t, t_{0}+\tau-\hat t}$}}}%
    \put(0.45091466,0.03013226){\color[rgb]{0,0,0}\makebox(0,0)[lb]{\smash{$x_{1}$}}}%
  \end{picture}%
\caption[Illustration of Lemma~\ref{L:waveBalances}]{Illustration of Lemma~\ref{L:waveBalances}, balance on two intervals.}
\label{fig:bal2int}
\end{figure}
As illustrated in  Figure~\ref{fig:bal2int}, decompose $A^{t_{0},\tau}_{I_{0}}\cup A^{t_{0},\tau}_{I_{1}}$ into the regions $A^{t_{0}, \tau'}_{I_{0}},A^{t_{0}, \tau'}_{I_{1}}, A^{\hat t, \tau-\tau'}_{[x_{0}(\hat t), x_{3}(\hat t)]}$, with $\tau'=\hat t-t_{0}$, where one applies separately~\eqref{E:balclosedint}.
In order to shorten notations, fix again $t_{0}=0$:
\begin{align}
\notag
\hat v^{\nu}_{\bar\imath}(\tau) &\big([x_{0}(\tau),x_{3}(\tau)]\big)-\hat v^{\nu}_{\bar\imath}(0)\big(I_{0}\cup I_{1}\big)
\\
\notag
&=
\hat v^{\nu}_{\bar\imath}(\tau) \big([x_{0}(\tau),x_{3}(\tau)]\big)
-\hat v^{\nu}_{\bar\imath}(\hat t) \big([x_{0}(\hat t),x_{3}(\hat t)]\big)
\\
\notag
&\phantom{=}
+\hat v^{\nu}_{\bar\imath}(\hat t) \big([x_{0}(\hat t),x_{1}(\hat t)]\big)
+\hat v^{\nu}_{\bar\imath}(\hat t) \big([x_{2}(\hat t),x_{3}(\hat t)]\big)
-\hat v^{\nu}_{\bar\imath}(\hat t) \big(\{x_{1}(\hat t)=x_{2}(\hat t)\}\big)
\\
\notag
&\phantom{=}
-\hat v^{\nu}_{\bar\imath}(0)\big(I_{0})
- \hat v^{\nu}_{\bar\imath}(0)\big(I_{1}\big).
\end{align}
If one denotes briefly $\Phi^{\nu,\out}_{\bar \imath,I_{0}}((\hat t,\tau])=\Phi^{\nu,\out}_{\bar \imath,I_{0}}(A^{\hat t,\tau}_{I_{0}})$ and so on, the balances yield
\begin{align}
\notag
\hat v^{\nu}_{\bar\imath}(\tau) &\big([x_{0}(\tau),x_{3}(\tau)]\big)-\hat v^{\nu}_{\bar\imath}(0)\big(I_{0}\cup I_{1}\big)
\\
\notag
&=
\hat \mu^\nu_{\bar\imath}\big(A^{\hat t,\tau-\hat t}_{[x_{0}(\hat t),x_{3}(\hat t)]}\big)
+ \hat \mu^\nu_{\bar\imath}\big(A^{\hat t} _{I_{0}}\big)
+ \hat \mu^\nu_{\bar\imath}\big(A^{\hat t} _{I_{1}}\big)
-\hat v^{\nu}_{\bar\imath}(\hat t)\big(\{x_{1}(\hat t)\}\big)
 \\
\notag
& \qquad \qquad
 +\Phi^{\nu,\out}_{\bar \imath,I_{0}}((0,\hat t])
 +\Phi^{\nu,\out}_{\bar \imath,I_{1}}((0, \hat t])
+\Phi^{\nu,\out}_{\bar \imath,[x_{0}(\hat t),x_{3}(\hat t)]}((\hat t,\tau]).
\end{align}
It remains then to notice that as the regions $A^{\hat t} _{I_{0}}$, $A^{\hat t} _{I_{1}}$ overlap at the interaction point then
\begin{align}
\notag
&\hat \mu^\nu_{\bar\imath}\big(A^{\hat t,\tau-\hat t}_{[x_{0}(\hat t),x_{3}(\hat t)]}\big)
+ \hat \mu^\nu_{\bar\imath}\big(A^{\hat t} _{I_{0}}\big)
+ \hat \mu^\nu_{\bar\imath}\big(A^{\hat t} _{I_{1}}\big)
=\hat \mu^\nu_{\bar\imath}\big(A^{\hat t}_{I_{0}\cup I_{1}}\big) + \hat \mu^\nu_{\bar\imath} (\{(\hat t, x_{1}(\hat t))\}) .
\end{align}
Moreover, the waves entering at $(\hat t, x_{1}(\hat t))$ from outside the region $A^{\tau} _{I_{0}\cup I_{1}}=A^{\tau} _{I_{0}}\cup A^{\tau} _{ I_{1}}$ are counted both in $ \Phi^{\nu,\out}_{\bar \imath,I_{0}}((0,\hat t])$ and $\Phi^{\nu,\out}_{\bar \imath,I_{1}}((0, \hat t])$, but only once in $\Phi^{\nu,\out}_{\bar \imath,I_{0}\cup I_{1}}((0,\tau]) $, while the ones interacting at $(\hat t, x_{1}(\hat t))$ and coming from the interior of $A^{\tau} _{I_{0}\cup I_{1}}$ are counted precisely once in $\Phi^{\nu,\out}_{\bar \imath,I_{0}}((0,\hat t])+\Phi^{\nu,\out}_{\bar \imath,I_{1}}((0, \hat t])$: since the amount of these waves from the region in between is precisely $\hat v^{\nu}_{\bar\imath}(\hat t)\big(\{x_{1}(\hat t)\}\big) -  \hat \mu^\nu_{\bar\imath} (\{(\hat t, x_{1}(\hat t))\}) $, therefore
\begin{align}
\notag
 \Phi^{\nu,\out}_{\bar \imath,I_{0}}((0,\hat t])
 &+\Phi^{\nu,\out}_{\bar \imath,I_{1}}((0, \hat t])
+\Phi^{\nu,\out}_{\bar \imath,[x_{0}(\hat t),x_{3}(\hat t)]}((\hat t,\tau])
\\
\notag
& =\hat v^{\nu}_{\bar\imath}(\hat t)\big(\{x_{1}(\hat t)\}\big) -  \hat \mu^\nu_{\bar\imath} (\{(\hat t, x_{1}(\hat t))\}) 
+\Phi^{\nu,\out}_{\bar \imath,I_{0}\cup I_{1}}((0,\tau]) .
 \end{align}
Collecting the terms one obatins
\[
\hat v^{\nu}_{\bar\imath}(\tau) \big([x_{0}(\tau),x_{3}(\tau)]\big)-\hat v^{\nu}_{\bar\imath}(0)\big(I_{0}\cup I_{1}\big)
=
\hat \mu^\nu_{\bar\imath}\big(A^{\tau}_{I_{0}}\cup A^{\tau} _{I_{1}}\big)
 +\Phi^{\nu,\out}_{\bar \imath,I_{0}\cup I_{1}}((0,\tau])
.
\]

The reasoning applied to two intervals $I_{0}$, $I_{1}$ can be applied as well to any finite number, obtaining the analogous equation.
Since the flux can be estimated by the interaction-cancellation measure, one finds then that for all $M\in\N$ and real intervals $I_{1},\dots , I_{M}$
\[
\hat v^{\nu}_{\bar\imath}(\tau) \big(I_{1}(\tau)\cup\dots \cup I_{M}(\tau)\big)-\hat v^{\nu}_{\bar\imath}(0)\big(I_{1}\cup\dots \cup I_{M}\big)
\leq
(\hat\mu^\nu_{\bar\imath}+ \OO\mu^{IC}_{\nu})\big(A^{\tau}_{I_{1}}\cup \dots \cup A^{\tau} _{I_{M}}\big) .
\]

Since the total strength of non-physical fronts at each finite time is controlled by $\varepsilon_{\nu}$, as well as the mass $|\rho^{\nu}|$ of $\mu_{\bar \imath} ^{\nu}$ due to interactions involving non-physical waves and $\mu^\nu_{\bar\imath}$ (Lemma~\ref{L:waveBalRadon}), from above
\begin{align}
v^{\nu}_{\bar\imath}(\tau) \big(I_{1}(\tau)\cup\dots \cup I_{M}(\tau)\big)-v^{\nu}_{\bar\imath}(0)\big(I_{1}\cup\dots \cup I_{M}\big)
&\leq
 (\mu^\nu_{\bar\imath}+\OO\mu^{IC})\big(A^{\tau}_{I_{1}\cup\dots \cup I_{M}}\big)
 +\OO\varepsilon_{\nu}
 \\
 \notag
 &\leq \OO\Big( \mu^{IC}\big(A^{\tau}_{I_{1}\cup\dots \cup I_{M}}\big)
 +\varepsilon_{\nu}\Big) .
\end{align}

%%%%%%%%%%%%%
\paragr{4) Balance for the jump part.}
For the `jump part' by~\eqref{EG:mujump} one can repeat the same argument as above with the relative measures, but the fluxes are non-positive because only $i$-shocks are involved, so that one does not need to estimate the positive part (which vanishes) of the fluxes by the interaction-cancellation measure: therefore
\begin{equation}
\label{E:jumpbalance}
v^{\nu}_{\bar\imath,\jump}(\tau) \big([a(\tau),b(\tau)]\big)-v^{\nu}_{\bar\imath,\jump}(0)\big([a,b]\big)
=
\mu^\nu_{\bar\imath,\jump}\big(A^{\tau}_{[a,b]}\big)
 +\Phi^{\nu,\out,\jump}_{\bar \imath,[a,b]}(A^{\tau}_{[a,b]})
\end{equation}
holds with new fluxes $\Phi^{\nu,\out,\jump}_{\bar \imath,[a,b]}$ which take into account only the contribute in $\Phi^{\nu,\out}_{\bar \imath,[a,b]}$ due to $i$-shocks in $\mathcal J^{\nu,i}_{(\varepsilon_{0},\varepsilon_{1})} $. In particular this flux is non-positive. Explicitly the new flux at the boundary $\Phi^{\nu,\out,\jump}_{\bar \imath,[a,b]}$ is
\begin{equation*}
\begin{cases}
\sigma'; \ \sigma'\leq 0& 
\text{\parbox{.75\linewidth}{If an $\bar \imath$-shock of strength $\sigma'$ in $\mathcal J^{\nu,i}_{(\varepsilon_{0},\varepsilon_{1})} $ enters, possibly interacting with a front wave either in $A^{\tau}_{[a,b]}$ or not belonging to $\mathcal J^{\nu,i}_{(\varepsilon_{0},\varepsilon_{1})} $.}}
\\
\sigma'+\sigma''; \ \sigma',\sigma''\leq 0&
\text{\parbox{.75\linewidth}{If two $\bar \imath$-shocks out of $A^{\tau}_{[a,b]}$ and in $\mathcal J^{\nu,i}_{(\varepsilon_{0},\varepsilon_{1})} $ interact.}}
\\
 0&
\text{\parbox{.75\linewidth}{Otherwise.}}
\end{cases}
\end{equation*}
The generalization to countably many intervals holds precisely as before.

\paragr{5) Balance for the continuous part.}
By subtracting~\eqref{E:jumpbalance} to~\eqref{E:balclosedint} one finds the analogous estimate for $v_{\cont}$, which again holds in the same way for countably many intervals, with a new flux at the boundary $\Phi^{\nu,\out,\cont}_{\bar \imath,[a,b]}(A^{\tau}_{[a,b]})$ which is the difference $\Phi^{\nu,\out}_{\bar \imath,[a,b]}-\Phi^{\nu,\out,\jump}_{\bar \imath,[a,b]}$: notice indeed that one can take advantage of \[\Phi^{\nu,\out}_{\bar \imath,[a,b]}-\Phi^{\nu,\out,\jump}_{\bar \imath,[a,b]}\leq [\Phi^{\nu,\out}_{\bar \imath,[a,b]}-\Phi^{\nu,\out,\jump}_{\bar \imath,[a,b]}]^{+}\leq [\Phi^{\nu,\out}_{\bar \imath,[a,b]}]^{+} \leq \mu_{\nu}^{IC} .
\]
Indeed, the negative terms in $\Phi^{\nu,\out,\jump}_{\bar \imath,[a,b]}$ cancel positive ones in $\Phi^{\nu,\out}_{\bar \imath,[a,b]}$, without adding a positive contribution in the difference.
The last equation in the statement holds then by definition of $\mu_{\bar\imath,\nu}^{ICJ}=\mu_{\nu}^{IC}+| \mu^\nu_{\bar\imath,\jump} |$, which controls $ \mu^\nu_{\bar\imath}- \mu^\nu_{\bar\imath,\jump}$.
\end{proof}

%:
%: Stima di decadimento
%:
\subsection{The decay estimate}
We prove in this section an estimate analogous to the decay of positive waves (Pages 210-216 in~\cite{Bressan}), but for the \emph{negative} part of  $(v^{}_{\bar\imath})_{\cont}$, under the assumption that the $\bar\imath$-characteristic field is genuinely non-linear.
We bound from below the continuous part of the waves of a semigroup solution $u$ on a Borel set $B$ at time $t$ by the wave$\backslash$jump wave balance measures $\mu_{\bar \imath}\backslash\mu_{\bar \imath,\jump}$ on a strip around $t$ of arbitrary height $\tau<t$ and the Lebesgue measure of $B$ divided by $\tau$: together with the previous result due to Bressan we obtain
\[
|(v_{\bar\imath})_{\cont}|(t)(B) \leq 
\OO\Ll^{1}(B)/\tau+
\OO\big( \mu_{\bar\imath}^{ICJ}\big)\big([t-\tau,t+\tau]\times\R\big) 
\qquad
\forall t>\tau>0.
\]
We remind that we defined $\mu_{\bar\imath}^{ICJ}$ as the $w^{*}$-limit of the measures $\mu^{ICJ}_{\bar\imath,\nu}$ in~\eqref{E:muICJinu}.

We remark that here there is no sharpness purpose.
%, the estimate above is rough and the aim here is just to show that there exists a probability measure on the plane which has positive mass at times when $v_{\bar \imath}(t)$ has a Cantor part.
%It is not even clear how to compute this measure given the semigroup solution, and its intrinsic meaning, because we believe this requires finer techniques.
We first prove an estimate from below for the `absolutely continuous' part of $v^{\nu}_{\bar\imath}$.
As a consequence of Corollary~\ref{C:weakConvergences}, and Remark~\ref{R:convrestr}, when passing to the limit we get the claim (Lemma~\ref{L:decaEstOnIntervals}).

\begin{lemma}[Approximate decay estimate on intervals]
\label{L:approxestimate}
Assume the $\bar \imath$-th characteristic field is genuinely non-linear.
Then for any disjoint closed intervals $\{I_{h}\}_{h\in\N}$ and $\tau, t_{0}> 0$ one has the bound
\[
-v^{\nu}_{\bar \imath,\cont}(t_{0})\big(I_{1}\cup\dots\cup I_{M}\big) 
\leq 
\OO\Big\{\Ll^{1}(I_{1}\cup\dots\cup I_{M})/\tau+ \mu_{\bar\imath,\nu}^{ICJ}( A^{t_{0},\tau}_{I_{1}\cup\dots\cup I_{M}}) +\varepsilon_{\nu}+\varepsilon_{1} \Big\},
\]
where $A^{t_{0},\tau}_{I_{1}\cup\dots\cup I_{M}}$ is the region bounded by generalized $\bar\imath$-characteristics in~\eqref{E:At0tau}.
\end{lemma}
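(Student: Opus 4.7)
The plan is to adapt Bressan's argument for the decay of positive waves (Theorem~10.3 in~\cite{Bressan}) to the negative part of the continuous wave measure, using Lemma~\ref{L:waveBalances} and the genuine non-linearity of the $\bar\imath$-th family. I will first prove the estimate for a single closed interval $I=[a,b]$ and then indicate how to extend it to a disjoint union.

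For a single $I$, let $a(t),b(t)$ be the minimal $\bar\imath$-characteristics of $u^\nu$ starting at $t_0$ from $a,b$, and let $\tau^*\in(0,\tau]$ be the first collapse time at which $a(t_0+\tau^*)=b(t_0+\tau^*)$, with $\tau^*=\tau$ if no collapse occurs. Applying the third inequality of Lemma~\ref{L:waveBalances} on the region $A^{t_0,\tau^*}_I$ and rearranging gives
\[
-v^\nu_{\bar\imath,\cont}(t_0)(I)\ \leq\ -v^\nu_{\bar\imath,\cont}(t_0+\tau^*)(I(\tau^*))\ +\ \OO\bigl(\mu_{\bar\imath,\nu}^{ICJ}(A^{t_0,\tau^*}_I)+\varepsilon_\nu\bigr),
\]
so it remains to bound the first term on the right by $\OO(\Ll^1(I)/\tau+\varepsilon_1)$.

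In the collapse case $\tau^*<\tau$, the section $I(\tau^*)$ is a singleton $\{x_0\}$: at most one outgoing $\bar\imath$-front of $u^\nu$ lies at $x_0$ at time $\tau^*$, and by Definition~\ref{D:max_epsilon_shock} every $\bar\imath$-front contributing to the continuous part has strength strictly below $\varepsilon_1$, so $|v^\nu_{\bar\imath,\cont}(\tau^*)(\{x_0\})|\leq\varepsilon_1$. In the no-collapse case $\tau^*=\tau$, with $b(\tau)-a(\tau)>0$, I use the spreading identity which in the piecewise-constant front-tracking setting reads
\[
\dot b(t)-\dot a(t)\ =\ \sum_{\text{fronts in }I(t)}\!\bigl[\lambda_{\bar\imath}(u^{\nu+})-\lambda_{\bar\imath}(u^{\nu-})\bigr]\ =\ v^\nu_{\bar\imath}(t)(I(t))+r(t),
\]
where by the parameterization~\eqref{E:sigmalushock} and $D\lambda_{\bar\imath}\tilde r_{\bar\imath}=1$ the $\bar\imath$-fronts contribute exactly $v^\nu_{\bar\imath}(t)(I(t))$, while $r(t)$ collects cross-family contributions with $|r(t)|\leq\OO\sum_{j\neq\bar\imath}|v^\nu_j|(t)(I(t))$. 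By strict hyperbolicity each $j\neq\bar\imath$ front traverses $I(t)$ with relative speed bounded away from zero, and a direct bookkeeping yields $\int_{t_0}^{t_0+\tau}r(t)\,dt\leq\OO\Ll^1(I)+\OO\tau\cdot\mu_\nu^{IC}(A^{t_0,\tau}_I)$. Combining with the wave balance of Lemma~\ref{L:waveBalances}, which compares $v^\nu_{\bar\imath}(t)(I(t))$ to $v^\nu_{\bar\imath}(t_0)(I)$ up to $\OO\mu_\nu^{IC}+\OO\varepsilon_\nu$, integrating the spreading identity in time, and using both $b(\tau)-a(\tau)\geq 0$ and the non-positivity $v^\nu_{\bar\imath,\jump}(t_0)(I)\leq 0$ that follows from genuine non-linearity of $\bar\imath$-shocks, we obtain
\[
-v^\nu_{\bar\imath,\cont}(t_0)(I)\ \leq\ -v^\nu_{\bar\imath}(t_0)(I)\ \leq\ \OO\bigl(\Ll^1(I)/\tau+\mu_{\bar\imath,\nu}^{ICJ}(A^{t_0,\tau}_I)+\varepsilon_\nu\bigr).
\]

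For the extension to a disjoint union $I_1\cup\dots\cup I_M$, I apply the argument region by region, invoking the multi-interval form of Lemma~\ref{L:waveBalances}: as long as the characteristic regions $A^{t_0,\tau}_{I_h}$ remain disjoint the bounds simply sum, and when two adjacent regions merge at some time $t<t_0+\tau$ the merging contributes to $\mu_{\bar\imath,\nu}^{ICJ}$ on the joint region so that from that time on the analysis continues on a single combined region, with only the outermost collapses of still-surviving regions paying the additive $\varepsilon_1$ appearing once in the statement. The main obstacle is the precise handling of the spreading identity in the piecewise-constant setting: tracking the one-sided traces of $u^\nu$ at the \emph{minimal} characteristics $a(t),b(t)$, which jump whenever they cross a front, and showing that the cross-family remainder $r(t)$ either cancels in pairs (entry vs.~exit of a wave through the region) or is dominated by the right hand side via the relative transit times and the interaction measure.
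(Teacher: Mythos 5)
Your outline diverges from the paper's proof in its treatment of the cross-family contribution, and that is where there is a genuine gap. The paper reproduces Bressan's §10.2 argument built around the auxiliary non-decreasing function $\Phi$, whose defining feature is the estimate $|\dot z(t)+\xi(t)-v^{\nu}(t)|\leq\OO(\varepsilon_{\nu}+\dot\Phi(t)\,z(t))$ with the cross-family error controlled by $\dot\Phi(t)\,z(t)$, i.e.~proportional to the \emph{current} width $z(t)$. This permits the integrating-factor trick $d/dt\,(e^{-\int_0^t\dot\Phi}z)$ and the clean dichotomy between ``$\dot z-\dot\Phi z<v^{\nu}_{\cont}(0)/4$ for all $t$'' (integrate, get $-z(0)\leq\tau v^{\nu}_{\cont}(0)/4$) and ``$\geq$ at some $\bar t$'' (combine with $\frac{4}{3}\xi\geq v^{\nu}_{\jump}-2\varepsilon_1$ and the wave balance to get the $\mu^{ICJ}$ bound). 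You instead write $\dot b-\dot a=v^{\nu}_{\bar\imath}(t)(I(t))+r(t)$ and integrate directly, asserting $\int_{t_0}^{t_0+\tau}r\,dt\leq\OO\Ll^{1}(I)+\OO\tau\cdot\mu^{IC}_{\nu}(A^{t_0,\tau}_{I})$ by ``direct bookkeeping''. This is the missing step: the cross-family remainder at time $t$ is $\OO\sum_{j\neq\bar\imath}|v^{\nu}_j|(t)(I(t))$, which is not proportional to $z(t)$; integrating it over the transit times of the transversal waves gives only $\int r\leq\OO\sup_t z(t)$, and since $z$ may grow (e.g.~by incoming rarefactions) you cannot replace $\sup_t z$ by $z(0)=\Ll^1(I)$ without the very Gronwall-type control that $\Phi$ supplies. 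Without it, dividing by $\tau$ leaves an $\OO$-constant that the statement does not allow.

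Your collapse/no-collapse dichotomy is a different, and in the collapse case valid, device: when $a(t_0+\tau^*)=b(t_0+\tau^*)$, bounding $|v^{\nu}_{\bar\imath,\cont}(\tau^*)(\{x_0\})|\leq\varepsilon_1$ from Definition~\ref{D:max_epsilon_shock} (any $\bar\imath$-shock not in $\mathcal J^{\nu,\bar\imath}_{(\varepsilon_0,\varepsilon_1)}$ has strength $<\varepsilon_1$) and feeding it through the third inequality of Lemma~\ref{L:waveBalances} is correct, and it is a somewhat different route to the $\varepsilon_1$ term than the paper's $\frac{4}{3}\xi\geq v^{\nu}_{\jump}-2\varepsilon_1$ bound on boundary fluxes. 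But the paper's derivative-based dichotomy subsumes the collapse case automatically ($z(\tau)\geq0$ is the only input needed in Case~1) and is what makes the no-collapse side rigorous. To repair your argument you would either need to justify the claimed $\int r$ bound — which essentially amounts to re-deriving the $\Phi$ machinery — or switch to the paper's dichotomy and use the balance from Lemma~\ref{L:waveBalances} at the critical time $\bar t$ rather than integrating the spreading identity over the whole strip.
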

\begin{remark}
\label{R:approxestimate}
Under the assumptions of the above lemma, an analogous statement holds for any Borel set $B$ if one considers its evolution by minimal generalized $\bar \imath $-th characteristics (or a different selection with the semigroup property). Denoting the generalized $\bar \imath $-th characteristics $\{y^{\nu}(t;t_{0},x)\}_{x\in B}$ and defining $A_{B}={\{y^{\nu}(t;t_{0},x)\}_{t_{0}<t\leq t_{0}+\tau}^{x\in B}}$ one has
\[
-v^{\nu}_{\bar \imath,\cont}(t_{0})\big(B\big) 
\leq 
\OO\Big\{\Ll^{1}(B)/\tau+ \mu_{\bar\imath,\nu}^{ICJ}( \overline{A_{B}}) +\varepsilon_{\nu}+\varepsilon_{1} \Big\}.
\]
Indeed, this holds true for countably many intervals by Lemma~\ref{L:approxestimate} and the observation that
\[
\mu_{\bar\imath,\nu}^{ICJ}( { A^{t_{0},\tau}_{[a,b]}}) = \mu_{\bar\imath,\nu}^{ICJ}( {A_{[a,b]}}) = \mu_{\bar\imath,\nu}^{ICJ}(\overline {A_{[a,b]}}\cap\{t>t_{0}\}),
\]
for any choice of characteristics starting out from $[a,b]$ and having the semigroup property.
Since on the r.h.s.~we have a nonnegative Radon measure, by inner/outer regularity it can be extended to Borel sets.
\end{remark}
\begin{proof}[Proof of Lemma~\ref{L:approxestimate}]
We adapt the argument in~\cite{Bressan} (\S10.2) about the decay of positive waves, whose main steps we briefly recall here.
Since the generalization to countably many intervals is analogous to the one of Lemma~\ref{L:waveBalances}, we consider the case of a single interval $I=[a,b]$.

The heuristic is the following.
Suppose $v^{\nu}_{\bar \imath,\cont}(t_{0})(I)<0$ and there are only $\bar \imath$-fronts in $I$, which is the main case we would like to manage.
Then initially $\Ll^{1}(I(t))$ decreases at a rate at least $v^{\nu}_{\bar \imath}(t_{0})(I)/4$; if $\Ll^{1}(I(t))$ keeps on decreasing at least of that rate between times $t_{0}$, $t_{0}+\tau$, one can estimate the initial value $\Ll^{1}(I)$ from below as stated just by integrating this differential relation.
Otherwise, interactions must take place in order to decrease the rate, with a special care for the ones at the boundary.

For simplicity of notation, we set $t_{0}=0$ and we consider the $\bar\imath$-th wave measures omitting the indices $\bar \imath$.
Let the interval $[a,b]$ evolve by the minimal forward characteristics: its length at time $t$ is
\[
z(t)=b(t)-a(t) .
\]
This function is absolutely continuous and it satisfies $\dot z(t)=\bar\lambda(t,b(t))-\bar\lambda(t,a(t))$ for a.e.~$t$. 

\emph{Estimate on the influence of the other families on the $\bar\imath$-wave measure.} At Page~213 of~\cite{Bressan} it is explicitly defined a function $\Phi$ which contains the variation of speed due to the presence of waves of other characteristic families.
$\Phi$ is piecewise Lipschitz continuous, non-decreasing, with discontinuities only at interaction times.
More precisely one has the estimate
\begin{gather}
\label{E:stimazBressan}
|\dot z(t)+\xi(t) - v^{\nu}(t)|\leq \OO(\varepsilon_{\nu} + \dot \Phi(t) z(t) )
\end{gather}
where $v^{\nu}(t)$ counts the variation of speed due to the waves of the same family, and $\xi$ corrects it taking into account waves at the boundary: we adopt the notation
\begin{gather*}
v^{\nu}_{(\cont)}(t):= v^{\nu}_{(\cont)}(t)([a(t),b(t)]),
\\
\xi(t):=(\bar\lambda(t,a(t))-\bar\lambda(t,a(t)^{-}))+(\bar\lambda(t,b(t)^{+})-\bar\lambda(t,b(t)))
.
\end{gather*}
%The function $\Phi$ contains the variation of speed due to the waves of other characteristic families.

\emph{Case 1.} If $\dot z(t) - \dot\Phi(t)z(t) < v^{\nu}_{\cont}(0)/4$ for all $t$, then
\[
d/dt\big(e^{-\int_{0}^{t}\dot\Phi}z(t) \big)
=
e^{-\int_{0}^{t}\dot\Phi}\big\{\dot z(t) - \dot\Phi(t)z(t)\} 
< 
e^{-\int_{0}^{t}\dot\Phi}v^{\nu} \cont(0)/4
\leq 
v^{\nu} \cont(0)/4
\]
and integrating the inequality between times $0$ and $\tau$, since $z(t)\geq 0$, one has
\[
-z(0)
\leq 
e^{-\int_{0}^{\tau}\dot\Phi}z(\tau)-z(0)
\leq 
\tau v_{\cont}^{\nu}(0)/4 
.
\]

\emph{Case 2.} Suppose instead $\dot z(\bar t) - \dot\Phi(\bar t)z(\bar t) \geq v^{\nu}_{\cont}(0)/4$ at some time $\bar t$.
\\
By~\eqref{E:stimazBressan} we have that
\[
\dot z(t) - \dot \Phi(t) z(t)\leq   v^{\nu}(t) -\xi(t) + \OO\varepsilon_{\nu} .
\]
Observing that $\bar\lambda(t,{a(t)}^{})$ is a suitable mean of $\lambda(t,{a(t)}^{\pm}) $ and $ v^{\nu}_{\jump}((t,a(t)))= \lambda(t,a(t)^{+}) -\lambda(t,{a(t)}^{-}) $ by~\eqref{E:sigmalushock}, one derives the bound 
\begin{align*}
\frac{4}{3}\xi(t)
&\stackrel{\phantom{\eqref{E:jumpbalance}}}{\geq }
 v^{\nu}_{\jump}((t,a(t)))+  v^{\nu}_{\jump}((t,b(t))) - 2\varepsilon_{1} 
\\
&\stackrel{\phantom{\eqref{E:jumpbalance}}}{\geq} 
v^{\nu}_{\jump}(t) - 2\varepsilon_{1}
.
\end{align*}
The two inequality, being $v^{\nu}_{\jump}$ negative, yield immediately that
\[
\dot z(t) - \dot \Phi(t) z(t)\leq
 v^{\nu}_{\cont}(t)+v^{\nu}_{\jump}(t) -\xi(t) + \OO\varepsilon_{\nu}
 \leq 
 v^{\nu}_{\cont}(t) + \OO\varepsilon_{\nu}+2\varepsilon_{1}.
\]
At time $\bar t$, taking also into account that we are assuming $\dot z(\bar t) - \dot\Phi(\bar t)z(\bar t) \geq v^{\nu}_{\cont}(0)/4$, we get thus
\[
v^{\nu}_{\cont}(0)/4\leq v^{\nu}_{\cont}(\bar t) + \OO\varepsilon_{\nu}+2\varepsilon_{1}
\] 
By the balance for the continuous part in the proof of Lemma~\ref{L:waveBalances} we obtain
\[
v^{\nu}_{\cont}(0)/4\leq v^{\nu}_{\cont}(0)  + \OO\mu^{ICJ}_{\nu}(A^{\tau}_{[a,b]}) + \OO\varepsilon_{\nu}+2\varepsilon_{1} .
\]
%
%\dots
%\begin{align*}
%\dot z(t) - \dot\Phi(t)z(t)
%&\stackrel{\eqref{E:stimazBressan}} {\leq}v^{\nu}(t) -\xi(t) + \OO\varepsilon_{\nu}
%\\
%&\stackrel{\eqref{E:balclosedint}} {\leq}
%v_{ }^{\nu}(0)
%+ \mu^\nu_{}\big(A^{\tau}_{[a,b]}\big)
%+  \Phi^{\nu,\out}_{[a,b]}(A^{\tau}_{[a,b]})
%-\xi(t)  + \OO\varepsilon_{\nu}
%.
%\end{align*}
%Then going on by the inequality $\dot z(t) - \dot\Phi(t)z(t) \geq v^{\nu}(0)/4$ one obtains
%\[
%3v^{\nu}_{}(0)/4
%\geq 
%- \mu^\nu_{}\big(A^{\tau}_{[a,b]}\big)
%- \Phi^{\nu,\out}_{[a,b]}(A^{\tau}_{[a,b]})
%+\xi(t) -\OO\varepsilon_{\nu}
%\tag{$\spadesuit$}
%.
%\]
%Combining ($\spadesuit$) and ($\heartsuit$) this yields that $3v^{\nu}_{\cont}(0)/4$ is more than
%\begin{align*}
%-\mu_{}^{\nu}&(A^{\tau}_{[a,b]}) 
%+3 \mu_{\jump}^\nu\big(A^{\tau}_{[a,b]}\big)/4
%-\Phi^{\nu,\out}_{[a,b]}(A^{\tau}_{[a,b]})
%+3 \Phi^{\nu,\out, \jump}_{[a,b]}(A^{\tau}_{[a,b]})/4
% - 3\varepsilon_{1} /2- \OO\varepsilon_{\nu}
%\\
%&\geq  -\mu_{}^{\nu}(A^{\tau}_{[a,b]}) /4
%-3\mu_{\cont}^{\nu}(A^{\tau}_{[a,b]}) /4
%-\Phi^{\nu,\out}_{[a,b]}(A^{\tau}_{[a,b]})/4
%-3\Phi^{\nu,\cont}_{[a,b]}(A^{\tau}_{[a,b]})/4
%- 3\varepsilon_{1}/2 - \OO\varepsilon_{\nu}
%\\
%&\geq  -\OO\mu^{ICJ}_{\nu}(A^{\tau}_{[a,b]}) 
%- 3\varepsilon_{1} /2 - \OO\varepsilon_{\nu}.
%\end{align*}
%In the last inequality we just dominated $\mu_{}^{\nu}$, $\mu_{\cont}^{\nu}$ and the positive part of the outgoing fluxes with the measure $\mu^{ICJ}_{\nu} $ defined in~\eqref{E:muICJinu},  as allowed respectively by Lemmas~\ref{L:waveBalRadon},~\ref{L:jumpWaveBalRadon} and \ref{L:waveBalances}.

\emph{Conclusion.}
Collecting the two cases one finds the bound uniform in $\nu$
\[
v^{\nu}_{\cont}(0)
\geq 
-\OO \bigg\{ (b-a)/\tau
 +\mu^{ICJ}_{\nu}(A^{\tau}_{[a,b]}) 
 +\varepsilon_{1}  +\varepsilon_{\nu} \bigg\}.
\]
This shows the claim for a single closed interval.
Similarly to what is done in Lemma~\ref{L:waveBalances}, the argument extends by direct computation to the union of countably many closed intervals.
\end{proof}

We finally arrive to the decay estimate for the negative part of $(v_{\bar\imath})_{\cont}$.
The characteristics of the semigroup solution $u$ we get in the limit in general are no more minimal, but just generalized $\bar\imath$-characteristics: this is why in~\eqref{E:decayEst} we have chosen to state it enlarging the set $A_{B}$ up to an horizontal strip.

\begin{lemma}[Decay estimate on intervals]
\label{L:decaEstOnIntervals}
Assume the $\bar \imath$-th characteristic field is genuinely non-linear.
Then there is a choice of generalized $\bar \imath $-th characteristics $\{y^{}(t;s,x)\}_{x\in \R}^{s\geq0}$ of $u$ such that for any Borel set $B\subset\R$ and $t_{0}>\tau>0$ the following estimate of the $\bar\imath$-th wave holds
\[
-(v^{}_{\bar \imath})_{\cont}(t_{0})\big(B\big) 
\leq 
\OO\Big\{\Ll^{1}(B)/\tau+ \mu_{\bar\imath}^{ICJ}( \overline{A_{B}}) \Big\}
\qquad
A_{B}={\{y^{}(t;t_{0},x)\}_{t_{0}\leq t\leq t_{0}+\tau}^{x\in B}}.
\]
\end{lemma}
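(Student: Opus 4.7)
The plan is to derive the estimate by passing to the limit $k\to\infty$ in the approximate estimate of Lemma~\ref{L:approxestimate}, along the subsequence $\nu_k$ of Corollary~\ref{C:weakConvergences}, and then to extend the resulting bound from finite unions of closed intervals to Borel sets by outer regularity. The generalized characteristics $y(t;s,x)$ of $u$ will be produced as uniform limits of the minimal $\bar\imath$-characteristics $y^{\nu_k}(t;s,x)$ of the wave-front tracking approximations.

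First I would construct the limiting characteristic flow. Each $y^{\nu_k}(\cdot;s,x)$ is Lipschitz with constant uniformly bounded by $\sup_\Omega|\lambda_{\bar\imath}|$, so Arzel\`a--Ascoli combined with a diagonal argument on a countable dense set of initial data $(s,x)\in\Q^+\times\Q$ yields a further subsequence (not relabelled) along which all these curves converge uniformly on compacta to Lipschitz limits $y(t;s,x)$. By Corollary~\ref{C:corAprroximation}, the pointwise convergence of $\lambda^{\nu_k}_{\bar\imath}$ outside $\Theta\cup\mathcal J$ implies that each limit satisfies the differential inclusion defining a generalized $\bar\imath$-characteristic of $u$, and the semigroup property passes to the uniform limit; the family is extended to all $(s,x)$ by a monotone selection consistent with minimality.

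Then, for $J=I_1\cup\dots\cup I_M$ a finite union of disjoint closed intervals, I would apply Lemma~\ref{L:approxestimate} to $u^{\nu_k}$ and pass to the limit using Corollary~\ref{C:weakConvergences} together with Remark~\ref{R:convrestr}. For every $t_0$ outside the countable exceptional set, the left-hand side $-v^{\nu_k}_{\bar\imath,\cont}(t_0)(J)$ converges to $-(v_{\bar\imath})_{\cont}(t_0)(J)$. The Hausdorff convergence of the boundary characteristics produces Hausdorff convergence of the approximated regions $A^{t_0,\tau,\nu_k}_J$ into $\overline{A^{t_0,\tau}_J}$, so the upper semicontinuity of weak$^*$-limits of nonnegative Radon measures on closed sets gives
\[
\limsup_k\mu^{ICJ}_{\bar\imath,\nu_k}\big(A^{t_0,\tau,\nu_k}_J\big)\ \leq\ \mu^{ICJ}_{\bar\imath}\big(\overline{A^{t_0,\tau}_J}\big),
\]
while the error terms $\varepsilon_{\nu_k}+\varepsilon_1^k$ vanish by construction of the sequence. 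The exceptional times are handled by approximating $t_0$ from the right and invoking right-continuity of the semigroup representative (Page~\pageref{lab:normalizationu}). The extension to a general Borel set $B$ is then obtained by outer regularity: approximate $B$ by open supersets, decompose each open superset into countably many disjoint open intervals, and each such open interval by increasing unions of closed subintervals, then apply monotone convergence for $\Ll^1$, for the Radon measure $(v_{\bar\imath})_{\cont}(t_0)$ and for the nonnegative measure $\mu^{ICJ}_{\bar\imath}$.

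The main obstacle is the interplay between the weak$^*$-convergence of the interaction-cancellation-jump measures $\mu^{ICJ}_{\bar\imath,\nu_k}$ and the $\nu_k$-dependent characteristic regions that enter as their argument: weak$^*$ convergence only yields $\limsup$-inequalities when tested against closed sets, so the closure $\overline{A_B}$ appearing in the statement is essentially forced, and justifying the passage to the limit requires a careful Hausdorff-type convergence argument for the boundary characteristics which simultaneously accommodates the choice of minimal characteristics of the $\nu_k$-approximations and the vanishing of the residual thresholds $\varepsilon_{\nu_k}+\varepsilon_1^k$.
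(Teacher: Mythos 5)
Your overall strategy mirrors the paper's: pass to the limit in Lemma~\ref{L:approxestimate} along the subsequence of Corollary~\ref{C:weakConvergences}, construct the limit characteristics by Ascoli--Arzel\`a, use upper semicontinuity of the nonnegative measures $\mu^{ICJ}_{\bar\imath,\nu}$ on the closure of the characteristic region, and then extend from finite unions of intervals to Borel sets by regularity, treating the exceptional times $t_0\in\Theta$ by approximating $t_0$ from the right. Up to this structural level you are in agreement with the paper.

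There is however a genuine gap in the step where you pass to the limit on the left-hand side. You assert that for $t_0$ outside the exceptional set, ``$-v^{\nu_k}_{\bar\imath,\cont}(t_0)(J)$ converges to $-(v_{\bar\imath})_{\cont}(t_0)(J)$'' for any finite union of disjoint closed intervals $J$. Weak$^*$ convergence of the signed measures $v^{\nu_k}_{\cont}(t_0)\rightharpoonup v_{\cont}(t_0)$ (which is what Remark~\ref{R:convrestr} gives) does \emph{not} by itself yield convergence of the measures of closed sets: the standard portmanteau-type fact requires that the \emph{total variations} $|v^{\nu_k}_{\cont}(t_0)|$ converge weak$^*$ to some $\lambda$ with $\lambda(\partial J)=0$, and that limit $\lambda$ need not coincide with $|v_{\cont}(t_0)|$ (mass from the positive and negative parts of $v^{\nu_k}_{\cont}$ can collide in the limit). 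The paper handles this explicitly: it extracts a further subsequence so that $[v^{\nu}_{\cont}(t_0)]^{\pm}\rightharpoonup [\bar v]^{\pm}$ with $[\bar v]^+-[\bar v]^-=v_{\cont}(t_0)$ but possibly $[\bar v]^\pm$ not mutually singular, and then restricts to finite unions $J$ of closed intervals whose endpoints are not atoms of $|\bar v|=[\bar v]^++[\bar v]^-$ (a co-countable restriction). Only on that class does the set-wise convergence actually hold, and one recovers the open/Borel case by approximating from the interior and then by outer regularity. Your version, stated for all finite unions of closed intervals, is not justified and needs this additional device; without it, the subsequent outer-regularity argument rests on an inequality you have not established.

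A smaller but related remark: the ``monotone selection consistent with minimality'' that you use to extend the limit characteristics to all $(s,x)$ is not needed for the proof and is not free; the characteristics obtained as uniform limits of the minimal $\nu_k$-characteristics need not be minimal for $u$. The paper avoids claiming minimality and simply works with ``some generalized $\bar\imath$-characteristics'' satisfying the semigroup property, which is all that enters the statement and suffices for the argument.
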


\begin{remark}
By the Jordan decomposition of $(v^{}_{\bar \imath})_{\cont}(t_{0})$ and the arbitrariness of $B$, this is precisely a lower bound for the negative part of $(v^{}_{\bar \imath})_{\cont}(t_{0})$.
\end{remark}

\begin{proof}
As usual, simplify notations by setting $t_{0}=0$ and omitting $\bar\imath$.

When $t_{0}$ was not a time in $\Theta$ of Theorem~\ref{T:pwconvergence}, by Remark~\ref{R:convrestr} the `continuous part' of $v^{\nu}(0)$ $w^{*}$-converge to the continuous part of $v^{}$.
Nevertheless, we do not know whether the positive and negative part $[v^{\nu}_{}(0)]^{\pm}$ of $v^{\nu}(0)$, and $[v^{\nu}_{\bar \imath,\cont}(0)]^{\pm}$, converge to the positive and negative part in the Jordan decomposition of $v(0)$, and $(v(0))_{\cont}$.
However, if one defines measures $[\bar v]^{\pm}$ by the relation
\[
[v^{\nu}_{\cont}(0)]^{\pm} \xrightarrow{w^{*}} [\bar v]^{\pm},
\]
where the convergence is obtained by compactness and up to a subsequence, then
\[
[\bar v]^{+}-[\bar v]^{-}= (v^{}_{}(0))_{\cont}
\qquad
[\bar v]^{+}\geq [(v^{})_{\cont}(0)]^{+}
\qquad
[\bar v]^{-}\geq [(v^{})_{\cont}(0)]^{-}.
\]
In general $[\bar v]^{\pm}$ are not orthogonal.

Since the atoms of $[\bar v]^{\pm}$ are at most countably many, one can consider first finitely many intervals $I_{1},\dots,I_{M}$ whose boundary has $0$ $|\bar v|$-measure, where $|\bar v|=[\bar v]^{+}+[\bar v]^{-}$.
Then the measures of $J=I_{1}\cup\dots\cup I_{M}$ converge:
\[
[v^{\nu}_{\cont}]^{\pm}(0)\big(J \big) \xrightarrow{} [\bar v]^{\pm} \big(  J \big),
\qquad
(v^{\nu})_{\cont}(0) \big( J\big) \xrightarrow{}  v^{}_{\cont}(0) \big(I \big)
 .
\]

By Lemma~\ref{L:approxestimate} and the following remark, the inequality
\[
-v^{\nu}_{\cont}(0) \big( J \big) 
\leq 
\OO\Big\{\Ll^{1}(J)/\tau+ \mu_{\nu}^{ICJ}( \overline{A^{\nu}_{J}}) +\varepsilon_{\nu}+\varepsilon_{1} \Big\}.
\]
holds with the minimal generalized $\bar\imath$-characteristics $\{y^{\nu}(t;t_{0},x)\}_{x\in J}$ of $u^{\nu}$ starting from $J$.

In the $\nu$-limit, by Ascoli-Arzela\`a theorem the selected $\nu$-characteristics $\{y^{\nu}(t;t_{0},x)\}_{x\in J}$ converge to some generalized $\bar\imath$-characteristics $\{y^{}(t;t_{0},x)\}_{x\in J}$ of $u$.
By the upper continuity of Borel probability measures on compact sets one can derive
\begin{align*}
-v_{\cont}(0)\big(J\big) 
&=  
-\lim_{\nu}  v^{\nu}_{\cont}(0) \big(J \big) 
\\
&\leq  
\liminf_{\nu} \OO\Big\{\Ll^{1}(J)/\tau+ \mu_{\nu}^{ICJ}( \overline{A^{\nu}_{J}}) +\varepsilon_{\nu}+\varepsilon_{1} \Big\}
\\
&\leq  
\OO\Big\{\Ll^{1}(J)/\tau+ \mu_{}^{ICJ}( \overline{A^{}_{J}})  \Big\}.
\end{align*}
The same results hold for open sets by approximation from the interior.
Then outer regularity yields the inequality for any Borel set.

When $t_{0}$ is one of the countably many points of $\Theta$ in Theorem~\ref{T:pwconvergence}, one should take into account that $t\mapsto(v_{\bar \imath})_{\jump}(t)$ is continuous from the right, and thus $(v_{\bar \imath})_{\cont}$.
The thesis can thus be obtained by a diagonal argument for a sequence $t_{k}\downarrow t_{0}$.
\end{proof}

\begin{corollary}
\label{C:final_estimate}
Consider a strictly hyperbolic system of conservation laws as~\eqref{E:consLaw} and let the $\bar\imath$-th characteristic field be genuinely non-linear,~\eqref{E:genNonl}.
By Lemma~\ref{L:decaEstOnIntervals} and the decay of positive waves in Theorem~10.3 of~\cite{Bressan}, one has that there exists a positive constant $C$ such that for every $t>\tau>0$, every Borel subset $B$ of $\R$ and every solution $u$ obtained as a limit of the front tracking approximation, the continuous part of the measures $v_{\bar\imath}$ satisfies
\[
|(v_{\bar\imath})_{\cont}|(t)(B) \leq 
C\Ll^{1}(B)/\tau+
C\big( \mu_{\bar\imath}^{ICJ}\big)\big([t-\tau,t+\tau]\times\R\big) .
\]
\end{corollary}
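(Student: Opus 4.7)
The plan is to control the positive and negative parts of $(v_{\bar\imath})_{\cont}(t)$ separately and then add them. Since the jump and continuous parts of the Radon measure $v_{\bar\imath}(t)$ are mutually singular, Jordan decomposition commutes with this splitting: writing $(v_{\bar\imath})_{\cont}(t)=(v_{\bar\imath})_{\cont}^{+}(t)-(v_{\bar\imath})_{\cont}^{-}(t)$, one has $(v_{\bar\imath})_{\cont}^{\pm}(t)\leq v_{\bar\imath}^{\pm}(t)$ as Borel measures, and
\[
|(v_{\bar\imath})_{\cont}(t)|(B)=(v_{\bar\imath})_{\cont}^{+}(t)(B)+(v_{\bar\imath})_{\cont}^{-}(t)(B).
\]

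For the negative part, I would apply Lemma~\ref{L:decaEstOnIntervals} with $t_{0}=t$ and the given $\tau$: this yields a selection of generalized $\bar\imath$-characteristics issuing from $B$ at time $t$ such that the corresponding set $\overline{A_{B}}$ lies inside $[t,t+\tau]\times\R$, so that
\[
(v_{\bar\imath})_{\cont}^{-}(t)(B)\leq \OO\bigl\{\Ll^{1}(B)/\tau+\mu_{\bar\imath}^{ICJ}(\overline{A_{B}})\bigr\}\leq \OO\bigl\{\Ll^{1}(B)/\tau+\mu_{\bar\imath}^{ICJ}([t,t+\tau]\times\R)\bigr\}.
\]

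For the positive part, I would invoke the decay of positive waves from Theorem~10.3 of~\cite{Bressan}, quoted as~\eqref{E:decaypos3} in the introduction, applied at times $s=t-\tau<t$:
\[
v_{\bar\imath}^{+}(t)(B)\leq C\bigl\{\Ll^{1}(B)/(t-s)+Q(s)-Q(t)\bigr\}=C\bigl\{\Ll^{1}(B)/\tau+Q(t-\tau)-Q(t)\bigr\}.
\]
Because the Glimm interaction potential $Q$ is monotone decreasing and its downward jumps occur at interaction times with size controlled by the interaction measure, $Q(t-\tau)-Q(t)\leq \OO\mu^{I}([t-\tau,t]\times\R)$, which is in turn dominated by $\mu^{IC}$ and hence by $\mu_{\bar\imath}^{ICJ}$ by the very definition $\mu_{\bar\imath}^{ICJ}=\mu^{IC}+|\mu_{\bar\imath,\jump}|$ passed to the limit. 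Combining with $(v_{\bar\imath})_{\cont}^{+}(t)(B)\leq v_{\bar\imath}^{+}(t)(B)$ gives the positive-part estimate with $[t-\tau,t]\times\R$ on the right-hand side.

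Summing the two pieces and absorbing $[t-\tau,t]\cup[t,t+\tau]\subset[t-\tau,t+\tau]$ into a single slab yields the stated bound, after enlarging the constant. The only subtle point—which I view as the main (mild) obstacle—is the step $(v_{\bar\imath})_{\cont}^{+}\leq v_{\bar\imath}^{+}$: it rests on the fact that for a signed Radon measure $v=v_{\cont}+v_{\jump}$ with $v_{\cont}\perp v_{\jump}$, the Jordan decomposition decouples as $v^{\pm}=v_{\cont}^{\pm}+v_{\jump}^{\pm}$, so that the positive wave decay applied to $v_{\bar\imath}^{+}$ automatically bounds $(v_{\bar\imath})_{\cont}^{+}$. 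Everything else is bookkeeping on the time strips.
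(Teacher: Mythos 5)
Your argument is correct and matches the route the paper intends: Lemma~\ref{L:decaEstOnIntervals} applied at $t_0=t$ controls the negative part of $(v_{\bar\imath})_{\cont}(t)$ on the slab $[t,t+\tau]\times\R$, Bressan's decay of positive waves together with $(v_{\bar\imath})_{\cont}^{+}\le v_{\bar\imath}^{+}$ (which indeed follows from the mutual singularity of $(v_{\bar\imath})_{\cont}$ and $(v_{\bar\imath})_{\jump}$, so the Jordan decomposition decouples) controls the positive part on $[t-\tau,t]\times\R$, and $\mu^{IC}\le\mu^{ICJ}_{\bar\imath}$ absorbs everything into the doubled slab. The paper gives no separate proof of this corollary precisely because it is intended to be exactly this bookkeeping.

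One small inaccuracy worth fixing: the chain
\[
Q(t-\tau)-Q(t)\le\OO\mu^{I}\big([t-\tau,t]\times\R\big)\le\OO\mu^{IC}\big([t-\tau,t]\times\R\big)
\]
has a false first inequality. The drop in the Glimm interaction potential at a collision point is not controlled by $|\sigma'\sigma''|$ alone: if $\sigma'$ and $\sigma''$ are of the same family with opposite signs, the cancellation removes from $Q$ the terms $(|\sigma'|+|\sigma''|-|\sigma|)\,|\sigma_\gamma|$ for all other waves $\sigma_\gamma$, contributing a quantity of order $(\text{cancellation})\cdot V$ that can dominate $|\sigma'\sigma''|$. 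The correct statement, which is what your argument needs, is $Q(t-\tau)-Q(t)\le\OO\,\mu^{IC}\big([t-\tau,t]\times\R\big)$: the decrease in $Q$ (and in the Glimm functional $\Upsilon=V+C_{0}Q$) at an interaction is bounded by a constant (depending on the small total variation) times the interaction-cancellation weight $|\sigma'\sigma''|+\big(|\sigma'|+|\sigma''|-|\sigma'+\sigma''|\big)$. With $\mu^{IC}$ rather than $\mu^{I}$ in the intermediate step the proof is clean. One should also record that this comparison is first established for the front-tracking approximations and then passed to the weak-$*$ limit on the closed slab, as in the proof of Lemma~\ref{L:decaEstOnIntervals}.
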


\appendix

%:
%: Tabella delle notazioni
%:
\section{Table of notations}

\newcommand{\secondcolumn}{.8\textwidth}

\begin{tabbing}
\hspace*{0.5cm}\=$\lambda_1(u)<\dots <\lambda_\dmns(u)$ \=\kill
   \> $\Ll^{1}$, $\Ll^{2}$
 \> \parbox{\secondcolumn}{The Lebesgue measure of dimension respectively $1$ and $2$}
 \\
 \>$\OO$
 \> A positive constant which can be uniformly bounded in the limiting index
\\
    \> $[\eta]^{+}$, $[\eta]^{-}$, $|\eta|$
 \> \parbox{\secondcolumn}{Respectively positive part, negative part and variation of a measure $\eta$}
 \\
\> $(\eta)_{\jump}$, $(\eta)_{\cont}$
 \> The jump part and the continuous part of a measure $\eta $
\\
\> $\Omega$
 \> \parbox{\secondcolumn}{Open subset of $\R^{\dmns}$}
\\
\> $f$
 \> \parbox{\secondcolumn}{Flux function for a strictly hyperbolic system of conservation laws}
 \\
\> $A$ 
 \> The Jacobian matrix $A=\Dif f$
 \\
\> $\lambda_i(u)$
 \> The $i$-th eigenvalue of $A=\Dif f$, by strict hiperbolicity $\lambda_1<\dots <\lambda_\dmns$
 \\
\> $\lambda_i^{(\nu)}(t,x)$, $\bar\lambda_{i}(t,x)$
 \> $\lambda_i$ composed with $u^{(\nu)}$ and a suitable mean at jump points (Page~\pageref{eigenvectors} and~\eqref{E:lambdalrnu})
 \\
 \> $l$, $\tilde l^{(\nu)}$, $r$, $\tilde r^{(\nu)}$
 \> Left and right eigenvectors of $A=\Dif f$, suitably normalized (Page~\pageref{eigenvectors})
\\
\> $\sigma_1,\dots,\sigma_\dmns, \Lambda, \Psi_{i}$
 \> Strengths of the waves and maps in the Riemann problem, see Page~\pageref{T:sigmaLambda} and~\eqref{E:sigmalu}
\\
\> $u$
 \> \parbox{\secondcolumn}{Mainly the variable in $\R^{\dmns}$ or the semigroup solution to~\eqref{E:consLaw}}
 \\
\> $u^{\nu}$
 \> \parbox{\secondcolumn}{The $\nu$-front-tracking approximation, see recalls in Section~\ref{S:frontTrackingApp}}
 \\
\> $u_{x}$, $u_{t}$
 \> \parbox{\secondcolumn}{The (measure) derivatives of the $\BV_{\loc}(\R^{+}\times\R)$ function $u$}
 \\
\> $u(t)$
\> \parbox{\secondcolumn}{The restriction of the function $u$ at time $t\geq 0$, see Page~\pageref{lab:normalizationu}}
 \\
\> $v_{1}^{(\nu)},\dots, v_{\dmns}^{(\nu)}$
 \> Measures of the wave decomposition~\eqref{E:waveDec} of $u_{x}^{(\nu)}$ along $\tilde r_{1}^{(\nu)},\dots ,\tilde r_{\dmns}^{(\nu)}$
 \\
\> $ v_{i}^{(\nu)}(t)$
 \> Conditional measures of $v_{i}^{(\nu)}$ in the disintegration w.r.t.~time, see e.g.~Page~\pageref{T:continuousEstimate}
 \\
\> $v^{\nu}_{i,\jump},v^{\nu}_{i,\cont}$
\> Parts of $v^{\nu}_{i}$ converging to the jump$\backslash$continuous part of $v_{i}$ (Section~\ref{S:frontTrackingApp}, Page~\pageref{Ss:balchar})
 \\
 \>
 $\mathcal J^{\nu,i}_{(\varepsilon_{0},\varepsilon_{1})}, \mathcal J$
\> The `jump sets' of $u$ and $u^{\nu}$ of Section~\ref{S:frontTrackingApp}
\\
 \> $\mu_{\nu}^{I}$, $\mu_{\nu}^{IC}$
 \> \parbox{\secondcolumn}{Interaction and interaction-cancellation measures, see~\eqref{E:intrInterCanc}}
\\
 \> $\mu_{}^{I}$, $\mu_{}^{IC}$
 \> \parbox{\secondcolumn}{A $w^{*}$-limit of $\mu_{\nu}^{I}$, $\mu_{\nu}^{IC}$}
\\
  \> $\mu_{i}^{(\nu)},\mu_{i,\jump}^{(\nu,(\varepsilon_{0},\varepsilon_{1}))}$
 \> \parbox{\secondcolumn}{Balance and wave balance measures defined in Section~\ref{Ss:wavebalmeas}}
 \\
  \> $\mu^{ICJ}_{(\bar\imath,\nu)}$
 \> \parbox{\secondcolumn}{The interaction-cancellation-jump balance measures in~\eqref{E:muICJinu}, and a relative $w^{*}$-limit}
\\
\> $\Phi_{\bar \imath,[a,b]}^{\nu,\out, (\jump\backslash\cont)}$
 \> The fluxes introduced in the proof of Lemma~\ref{L:waveBalances}
 \\
   \>$A^{t_{0},\tau}_{[a,b]}$
 \> The region bounded by selected (e.g.~minimal) $i$-characteristics in~\eqref{E:At0tau}
\end{tabbing}

%:
%: Bibliografia
%:

\end{document}